\documentclass[12pt]{article}
\usepackage[margin=1in]{geometry}
\usepackage{booktabs}
\usepackage{xcolor}
\RequirePackage[sectionbib,numbers,sort&compress]{natbib}
\usepackage{amsmath, mathtools, amssymb, amsthm}
\newcommand{\mathbbm}[1]{\mathbf{#1}}
\newtheorem{theorem}{Theorem}
\newtheorem{lemma}{Lemma}
\newtheorem{proposition}{Proposition}

\theoremstyle{definition}

\usepackage{pdfpages}
\usepackage[hidelinks]{hyperref}
\usepackage{currfile}
\newtheorem{assumption}{Assumption}
\title{Global minimax rates for heterogeneous treatment effect estimation}

\author{
Shuhan Liu \quad Matey Neykov\textsuperscript{*}\\[0.5em]
\small Department of Statistics and Data Science, Northwestern University\\
\small {Evanston, Illinois, 60208, USA}\\[0.3em] \small \textsuperscript{*}Corresponding author: \texttt{mneykov@northwestern.edu} }
\date{}

\begin{document}

\maketitle
\begin{abstract}
Recent work established minimax rates for pointwise estimation of the conditional average treatment effect (CATE), but the fundamental limits under global losses remained unresolved. We derive matching minimax lower and upper bounds under the $L_2$ and $L_\infty$ risks on certain covariate-density and support classes. For the lower bounds, we develop a many-fuzzy-hypotheses method that replaces each fixed alternative in a multiple-hypothesis construction with a mixture over nuisance functions that leaves its CATE unchanged. A localized product-mixture $\chi^2$ bound is used to control the divergence of each resulting mixture from a common baseline, enabling multiple well-separated CATE hypotheses to be handled simultaneously. For the upper bounds, we analyze a modified local polynomial R-learner. The $L_2$ argument extends the pointwise bias--variance analysis, while the $L_\infty$ argument combines concentration inequalities for U-statistics with a continuity argument enabled by the use of a continuous kernel and additional regularity of the second-order basis. The resulting rates show that the pointwise elbow phenomenon persists globally: the $L_2$ transition threshold is unchanged, while the $L_\infty$ rate has an additional logarithmic factor and a transition threshold adjusted by $1-\log\log n/\log n$.
\end{abstract}

\noindent\textbf{Keywords:} causal inference, conditional average treatment effect, minimax bounds, fuzzy hypotheses, nonparametric estimation, R-learner.

\section{Introduction}

Treatment effect heterogeneity arises across a wide range of scientific disciplines \cite{heckman1997making, rothwell1995can, green2012modeling}. Individuals may respond differently to the same intervention because of differences in their baseline characteristics, and understanding this heterogeneity is crucial for informed decision-making \cite{manski2004statistical}. In medicine, characterizing heterogeneous treatment effects is central to precision medicine, where treatments are tailored to individual patient profiles rather than selected solely on the basis of population-level averages \cite{kosorok2019precision, kent2018personalized}. Similar considerations arise in economics, education, and public policy, where knowledge of treatment effect heterogeneity can inform the design of targeted interventions and the allocation of limited resources \cite{kitagawa2018should, athey2021policy}. Consequently, developing reliable methods for estimating heterogeneous treatment effects has become a major focus of modern causal inference.

Under the potential-outcomes framework, treatment effect heterogeneity is commonly characterized by the conditional average treatment effect (CATE). Let $X$ denote a vector of covariates with density $f$ supported on $\mathcal{X}\subset[0,1]^d$, and let $Y(1)$ and $Y(0)$ denote the potential outcomes under treatment and control, respectively. The CATE is defined as
\[
 \mathbb{E} ( Y(1) - Y(0) \mid X=x).
\]
Let $A \in \{0,1\}$ denote the treatment indicator, and $Y$ the observed outcome. Define the propensity score $\pi(x)= \Pr(A=1\mid X=x)$, and the conditional outcome regression functions $\mu_a(x)=\mathbb{E}(Y\mid X=x,A=a)$ for $a\in \{0,1\}$. Denote the observed data by $Z=(X,A,Y)$. Suppose that $Z_1, \ldots, Z_n$ are independent and identically distributed observations from a distribution $P$. Under the standard consistency, ignorability, and positivity assumptions, namely,
\[
Y = AY(1) + (1-A)Y(0), \quad
(Y(0), Y(1)) \perp A \mid X,
\]
and $\varepsilon < \pi(X) < 1-\varepsilon$ almost surely for some $\varepsilon>0$, the CATE can be identified by
\[
\tau_P(x) = \mu_1(x)-\mu_0(x).
\]
Throughout the paper, we suppress the subscript $P$ whenever the underlying distribution is clear from context.

Estimating the CATE is fundamentally challenging because only one potential outcome is observed for each individual, and as a result, individual treatment effects are never observed directly. Early approaches to heterogeneous treatment effect estimation relied on relatively restrictive structural assumptions. Examples include partially linear and varying-coefficient models \cite{robinson1988root, robins1992estimating}, which encompass the special case of a constant treatment effect, as well as parametric and semiparametric structural nested models \cite{vansteelandt2014structural}. More recently, attention has shifted toward flexible estimators capable of accommodating complex treatment effect heterogeneity. Prominent examples include meta-learners \cite{kunzel2019metalearners}, causal forests \cite{wager2018estimation}, and orthogonalized estimators such as the R-learner and DR-learner \cite{foster2023orthogonal, nie2021quasi, kennedy2023towards, kennedy2024minimax}. We refer readers to \cite{kennedy2023towards, kennedy2024minimax} and Chapter 4 of \cite{wager2024causal} for comprehensive overviews of modern methods for CATE estimation.

Early theoretical analyses primarily focused on establishing convergence rates for specific estimators rather than characterizing minimax-optimal performance \cite{wager2018estimation, foster2023orthogonal, kennedy2023towards}. The fundamental statistical limits of CATE estimation remained poorly understood until Kennedy et al.\cite{kennedy2024minimax} developed a sharp minimax theory for pointwise CATE estimation. Specifically, they considered a nonparametric model in which $\pi$ is $\alpha$-smooth, $\mu_0$ is $\beta$-smooth, the CATE $\tau$ is $\gamma$-smooth, and the covariate density $f$ is bounded above. For estimating $\tau$ at a fixed point $x_0 \in [0,1]^d$ under the pointwise risk
\[
    \mathbb{E}|\hat{\tau}(x_0) - \tau(x_0)|,
\]
they derived the minimax rate and constructed a local polynomial R-learner that attains it. A notable feature of their result is an elbow phenomenon governed by the average nuisance smoothness $s = (\alpha+\beta)/2$. When $s\geq\frac{d/4}{1+d/2\gamma}$, the minimax rate coincides with the classical nonparametric regression rate, while below this threshold, the minimax rate deteriorates and interpolates between the rates for nonparametric regression and functional estimation. This behavior reflects the hybrid nature of the CATE as both a regression function and a functional of the observed data distribution.

While pointwise rates characterize the difficulty of estimating the CATE at a fixed covariate value, many applications require accurate estimation over the entire covariate space. In such settings, global risk criteria provide a more comprehensive assessment of estimator performance. The $L_2$ risk, which measures the integrated estimation error over the covariate space, is a natural criterion for evaluating overall estimation accuracy. Indeed, many existing CATE estimators are motivated by or analyzed through squared-error objectives \cite{nie2021quasi, kunzel2019metalearners, foster2023orthogonal}. In contrast, the $L_\infty$ risk controls worst-case performance over the covariate space and guards against large local errors. Such uniform guarantees are particularly relevant when treatment decisions are individualized. These considerations motivate the study of minimax rates for CATE estimation under both $L_2$ and $L_\infty$ risks. Formally, we consider
\[
    \mathbb{E}\|\hat{\tau}- \tau\|_{L_2(\mathcal{X})}^2, \quad
    \mathbb{E}\|\hat{\tau}- \tau\|_{L_\infty(\mathcal{X})}^2,
\]
where
\[
    \|g\|_{L_2(\mathcal{X})}^2 = \int_{\mathcal{X}} g(x)^2 dx ,
    \quad
    \|g\|_{L_\infty(\mathcal{X})} = \sup_{x \in \mathcal{X}} |g(x)| .
\]
Under this definition, the risks depend only on the support $\mathcal{X}$ and not on the covariate density $f$. When $f$ is bounded above and away from zero on $\mathcal{X}$, the corresponding risks are equivalent, up to multiplicative constants, to those defined with respect to the marginal distribution of $X$.

The primary goal of this paper is to establish the sharp minimax rates for CATE estimation under the global $L_2(\mathcal{X})$ and $L_{\infty}(\mathcal{X})$ risks, and to determine whether the elbow phenomenon of \cite{kennedy2024minimax} persists under these global risk criteria. The global lower bounds require more than a direct application of either the classical two-fuzzy-hypothesis method or a standard multiple-testing argument. The former compares only two mixture distributions and therefore cannot directly exploit a large collection of CATE functions that are well separated under a global loss, whereas the fixed alternatives used in the latter can be too easily distinguished in the low-nuisance-smoothness regime. We combine these two ideas by replacing each fixed alternative in a global packing with a fuzzy hypothesis that mixes over nuisance perturbations while keeping the CATE fixed. A new localized product-mixture $\chi^2$ bound is proposed that tracks only the regions where each alternative differs from the baseline and provides the divergence control needed to establish lower bounds under both the $L_2$ and $L_\infty$ risks. Section~\ref{sec::lower} formalizes this construction and applies it to establish the two global minimax lower bounds. Subsequently, Section~\ref{sec::upper} analyzes a modified local polynomial R-learner based on \cite{kennedy2024minimax}. By incorporating a continuous kernel and imposing additional regularity conditions on the second-order basis, we obtain uniform control over the covariate space through concentration inequalities for U-statistics and a continuity argument, thereby establishing matching upper bounds. Our results confirm that the elbow phenomenon persists under both global risk criteria. Under the $L_2(\mathcal{X})$ risk, the transition point coincides with that of pointwise estimation. Under the $L_\infty(\mathcal{X})$ risk, the stronger uniform loss criterion induces an additional logarithmic factor in the minimax rate, accompanied by a slight shift in the transition boundary. Together, the lower and upper bounds establish the sharp minimax rates under both global risk criteria on the covariate-density and support classes described in Appendix~\ref{subsec:density_and_construction}, with the stated upper-bound conditions.

Throughout the paper, $\mathbf{e}_i$ denotes the $i$-th canonical basis vector in coordinate vector space. For a vector $a$, let $\|a\|_0$, $\|a\|_1$, $\|a\|_2$, $\|a\|_\infty$ denote the $\ell_0$, $\ell_1$, Euclidean, and sup norms, respectively, and let $[a]_j$ denote the $j$-th element of $a$. For a matrix $A$, let $\|A\|$ denote the operator norm and $\|A\|_{\infty}$ the maximum row-sum norm. For a symmetric real matrix $A$, let $A^k$ denote its $k$-th matrix power, $[A]_{i,j}$ denote the $(i,j)$-th entry of $A$, and ${\lambda}_{\min}(A)$ and ${\lambda}_{\max}(A)$ denote its smallest and largest eigenvalues, respectively. For any real square matrix $A$, let $\sigma_{\min}(A)=\sqrt{\lambda_{\min}(A^TA)}$ denote its smallest singular value. For a measurable function $g$ and measure $Q$, let $\|g\|_Q^2= \int g(z)^2 dQ(z)$  denote the squared $L_2(Q)$ norm. When $Q$ is the uniform measure on the underlying domain, the subscript is omitted. We also write $\|g\|_{\mathcal{L}_\infty(\mathcal{Z})} = \sup_{z \in \mathcal{Z}} |g(z)|$. All smoothness assumptions are defined in terms of standard Hölder classes. We use $P$ to denote a probability distribution, $P^n$ its $n$-fold product measure, $\mathcal{P}$ a model class, $\Pr$ the probability, and $\mathbb{E}$ the expectation. Let $\mathbbm{1}\{B\}$ denote the indicator function of an event $B$, and write $\mathbbm{1}_{\mathcal{Z}}(z)$ as shorthand for $\mathbbm{1}\{z \in \mathcal{Z}\}$. For two sets $C$ and $D$, let $C \setminus D=C\cap D^c$ denote the set difference. For two sequences $a_n$ and $b_n$, write $a_n\asymp b_n$ if there exist constants $0<c<C<\infty$ such that $ c\cdot b_n \leq a_n \leq C \cdot b_n$ for all sufficiently large $n$. Write $a_n\lesssim b_n$ if $a_n \leq C b_n$ for some constant $C>0$, and $a_n\gtrsim b_n$ if $a_n \geq c b_n$ for some constant $c>0$. Finally, we use $\mathbb{P}$  and $\mathbb{U}$ for empirical averages and second-order U-statistics, that is $\mathbb{P}(g) = \frac{1}{n}\sum_{i=1}^n g(Z_i)$, and $\mathbb{U}(g) = \frac{1}{n(n-1)}\sum_{i\neq j} g(Z_i,Z_j)$.

\section{Lower Bound} \label{sec::lower}
Following \cite{kennedy2024minimax}, we consider binary outcomes and use the parametrization $(f, \pi, \mu_0, \tau)$. Throughout this section, we focus on the regime $\alpha \geq \beta$. Constructions for the complementary regime and for alternative parametrizations follow by adapting the arguments in Appendix~A of \cite{kennedy2024minimax} to the $L_2(\mathcal{X})$ and $L_\infty(\mathcal{X})$ risks.

Let $\mathcal{P}$ denote the class of distributions satisfying the following conditions:
\begin{itemize}
\item[1.] The covariate density $f$ is uniformly bounded above on the support $\mathcal{X} \subset [0,1]^d$.
\item[2.] $\pi$ is $\alpha$-smooth, $\mu_0$ is $\beta$-smooth, and $\tau$ is $\gamma$-smooth.
\end{itemize}
Define $s = (\alpha + \beta)/2$. The following theorem establishes minimax lower bounds under the $L_2(\mathcal{X})$ and $L_\infty(\mathcal{X})$ risks.
\begin{theorem} \label{thm::lower}
For sufficiently large $n$,
\[
\inf_{\hat{\tau}} \sup_{P \in \mathcal{P}} \mathbb{E}_{P^n}\|\hat{\tau} - \tau\|_{L_2(\mathcal{X})}^2
\gtrsim
\begin{cases}
n^{-\frac{2}{d/2\gamma+d/4s+1}} & s < \frac{d/4}{1+d/2\gamma}\\
n^{-\frac{2}{2+d/\gamma}}& s \geq \frac{d/4}{1+d/2\gamma}\\
\end{cases}.
\]
Moreover,
\[
\inf_{\hat{\tau}} \sup_{P \in \mathcal{P}} \mathbb{E}_{P^n}\|\hat{\tau} - \tau\|_{L_\infty(\mathcal{X})}^2
\gtrsim
\begin{cases}
{n}^{-\frac{2}{d/2\gamma+d/4s+1}}
({\log n})^{\frac{1}{d/2\gamma+d/4s+1}}
&
s < \frac{d/4}{1+d/2\gamma}(1-\frac{\log \log n}{\log n})\\
(\frac{\log n}{n})^{\frac{2}{2+d/\gamma}} &
s \geq \frac{d/4}{1+d/2\gamma}(1-\frac{\log \log n}{\log n })\\
\end{cases}.
\]
\end{theorem}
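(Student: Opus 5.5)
We prove both bounds with a many-fuzzy-hypotheses version of Fano/Assouad's lemma, adapting the pointwise construction of \cite{kennedy2024minimax} to a global packing. The rate-giving scale comes from balancing a CATE bump of height $\delta\asymp h^{\gamma}$ on cells of side $h$ against nuisance perturbations on sub-cells of side $h'$. As in the pointwise case, we take $X$ uniform on a union of small cubes so that $f$ is bounded. Here $\mathcal{X}$ is the union of $m\asymp h^{-d}$ well-separated cells $C_1,\dots,C_m$ of side $h$.

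\textbf{Baseline and alternatives.} The baseline is $P_0$ with $\pi\equiv 1/2$, $\mu_0\equiv 1/2$, $\tau\equiv0$. For each cell $C_j$ and sign $\omega_j\in\{0,1\}$, we add the CATE bump $\omega_j h^\gamma K((x-c_j)/h)$, where $K$ is a smooth compactly supported bump. Each cell is subdivided into $k\asymp (h/h')^{d}$ sub-cubes, and on each we attach random nuisance perturbations with independent Rademacher signs $\lambda$. The perturbations are $\pi=1/2+\lambda\, h'^{\alpha}B(\cdot)$ and $\mu_0=1/2+\lambda\, h'^{\beta}B(\cdot)$, chosen (as in the fuzzy construction of \cite{kennedy2024minimax}) so that $\mu_1=\mu_0+\tau$ stays valid and the marginal law of $(X,A,Y)$ matches $P_0$ to first order after averaging over $\lambda$.

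\textbf{Choice of hypotheses for each risk.} For $L_2$ we use a Varshamov--Gilbert subset $\Omega\subset\{0,1\}^m$ with pairwise Hamming distance $\gtrsim m$ and $\log|\Omega|\asymp m$. For $L_\infty$ we use the $m$ single-bump hypotheses $\omega=\mathbf{e}_j$, with $\log M\asymp\log(1/h)$. The separations are $\|\tau_\omega-\tau_{\omega'}\|_{L_2}^2\gtrsim m\,h^{2\gamma}h^d\asymp h^{2\gamma}$ and, for $L_\infty$, $h^{\gamma}$. Each hypothesis $Q_\omega$ is the mixture of $P_{\omega,\lambda}^n$ over $\lambda$. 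The CATE is unchanged under the mixing, so the separations hold for every member of the mixture.

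\textbf{Key step: the localized product-mixture $\chi^2$ bound.} We show $\chi^2(Q_\omega, P_0^n)$ is bounded by a constant times $\log|\Omega|$. Following the Robins--Li--Tchetgen--van der Vaart mixture bound, since the perturbations are supported only on the cells where $\omega_j=1$, we expect
\[
1+\chi^2(Q_\omega,P_0^n)\le \exp\!\Big(C\,n\,p\,\big(n\,h^{2\gamma}h'^{d}+ n\,h'^{2(\alpha+\beta)}h'^{d}\cdots\big)\Big),
\]
where $p\asymp \|\omega\|_0h^d$ is the mass of the perturbed region. Concretely, we would prove a bound of the form $\chi^2\lesssim \exp\{C n^2 \|\omega\|_0 h^d(\delta^4 h'^d + h'^{4s+d}\cdot\text{stuff})\}-1$. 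Localization matters: for a single bump ($L_\infty$) the exponent scales with $h^d$ rather than with $1$, which is what produces the $\log n$ gain. With the bound in hand, we choose $h'$ to balance the terms. In the low-smoothness regime this gives $h'^{4s}\asymp h^{2\gamma}$ with $n^2h^{2\gamma}\cdot h^{d}h'^{d}\cdots\asymp m$, i.e. $h\asymp n^{-1/(2\gamma+d+d\gamma/2s)}$ for $L_2$. This yields $h^{2\gamma}=n^{-2/(1+d/2\gamma+d/4s)}$. In the regime $s\ge \frac{d/4}{1+d/2\gamma}$ we instead use fixed (non-fuzzy) alternatives with the standard KL bound $nh^{2\gamma+d}\lesssim1$, which gives $n^{-2/(2+d/\gamma)}$. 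For $L_\infty$ the right-hand side is $\log m\asymp\log n$ instead of a constant. This multiplies the effective sample size by $1/\log n$ and gives $(n/\log n)$-type rates. The transition threshold shifts by the factor $1-\log\log n/\log n$ because the comparison of the two regimes' rates now involves $\log n$ raised to different exponents. Solving $n^{-2/(2+d/\gamma)}(\log n)^{2/(2+d/\gamma)}$ against the fuzzy rate yields exactly that condition on $s$.

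\textbf{Conclusion and main obstacle.} The final step applies Fano's inequality in its $\chi^2$/mixture version (Tsybakov's Theorem 2.5-type result with mixtures). The main obstacle is the localized $\chi^2$ computation. It requires expanding the product over sub-cubes, using independence of the $\lambda$'s across disjoint supports, and showing that cross terms vanish under the first-order matching. Only the second-order terms, of size $h'^{2\alpha}h'^{2\beta}$ and $h^{2\gamma}$ times bump norms, survive, and these must be tracked carefully enough to keep the $\|\omega\|_0 h^d$ localization factor.
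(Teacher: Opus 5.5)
\textbf{There is a genuine gap: the baseline.} You compare each fuzzy alternative $Q_\omega$ with a \emph{fixed} product measure $P_0^n$ (where $\pi\equiv\mu_0\equiv 1/2$ and $\tau\equiv 0$). You then assert that after first-order matching only terms of size $h'^{2\alpha}h'^{2\beta}$ survive. This fails when $\alpha>\beta$.

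Against a simple null, $1+\chi^2(Q_\omega,P_0^n)=\mathbb{E}_{\lambda,\lambda'}(1+S)^n$ with $S=\sum_j\lambda_j\lambda_j'\epsilon_j$. On each perturbed sub-cube, $q_\lambda-p_0$ is $\lambda_j$ times a vector of size $\asymp h'^{\alpha}+h'^{\beta}\asymp h'^{\beta}$. The reason is that the $\mu_0$-perturbation shifts $\Pr(Y=1\mid X)$ at first order, and nothing in $P_0$ cancels it. Hence $\epsilon_j\asymp p_j h'^{2\beta}$.

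Matching $\bar q=p_0$ removes only the constant term. The quadratic term alone is $\binom{n}{2}\sum_j\epsilon_j^2\asymp n^2\|\omega\|_0(h^{2d}/k)h'^{4\beta}$, so this is not slack in a bound. In the notation of Theorem~\ref{lemma::chi}, you have $\delta_1=0$ but $\delta_2\asymp h'^{2\beta}$, so the exponent carries $h'^{4\beta}$ rather than $h'^{4s}$.

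With $h^\gamma\asymp h'^{2s}$, your balancing then yields only $n^{-8s/(4\beta+d+2sd/\gamma)}$. This is weaker than the target $n^{-8s/(4s+d+2sd/\gamma)}=n^{-2/(d/2\gamma+d/4s+1)}$. Re-optimizing $h'$ and the two amplitudes does not close the gap unless $\alpha=\beta$. Here the constraints are $\pi$-amplitude $\le h'^\alpha$, $\mu_0$-amplitude $\le h'^\beta$, and product of amplitudes $\asymp h^\gamma$.

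\textbf{The missing idea} is that the baseline must itself be a mixture carrying the same $\mu_0$-perturbation. The paper takes $p_\lambda=(f,1/2,\mu_{0\lambda}-\tau_h/2,\tau_h)$ and $q_\lambda=(f,\pi_\lambda,\mu_{0\lambda},0)$, which share the term $\lambda h'^{\beta}$. That term cancels in $q_\lambda-p_\lambda$, leaving $\delta_2\asymp h'^{2\alpha}$. It enters only through $\delta_1\delta_2\asymp h'^{2\beta}h'^{2\alpha}=h'^{4s}$, and $\delta_3=0$ because $h^\gamma=4h'^{2s}$. The alternatives $(P^{w})_\lambda$ switch $P_\lambda$ to $Q_\lambda$ on the cells with $w_i=1$.

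This is exactly why two ingredients are needed. The first is a mixture-versus-mixture bound, Theorem~\ref{lemma::chi}. It is proved by conditioning on multinomial cell counts as in Robins et al.\ and using negative association to localize to $S=\{i:w_i=1\}$. The second is a testing lemma against a common baseline \emph{mixture}, Theorem~\ref{lemma::lower-bound}. A direct computation against $P_0^n$ cannot produce the dependence on $s=(\alpha+\beta)/2$.

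Smaller points:
\begin{itemize}
\item Your $h\asymp n^{-1/(2\gamma+d+d\gamma/2s)}$ should be $n^{-2/(2\gamma+d+d\gamma/2s)}$ to give $h^{2\gamma}=n^{-2/(1+d/2\gamma+d/4s)}$.
\item What must be $\lesssim\log M$ is the exponent, i.e.\ $\log(1+\chi^2)$, not $\chi^2$ itself.
\item With $\tau=+h^\gamma$ on the mixture side, mean matching requires perturbing $\pi$ and $\mu_0$ with opposite signs and shifting $\mu_0$ by $-\tau/2$.
\end{itemize}
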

The lower bound under the $L_2(\mathcal{X})$ risk exhibits the same elbow phenomenon identified by \cite{kennedy2024minimax} for pointwise CATE estimation. In particular, when $s < \frac{d/4}{1+d/2\gamma}$, the classical nonparametric regression rate is unattainable, and the minimax risk depends jointly on the smoothness of the target function $\gamma$ and the nuisance smoothness $s$. Once $s$ exceeds the threshold $\frac{d/4}{1+d/2\gamma}$, the lower bound coincides with the classical nonparametric regression rate, and the nuisance smoothness no longer determines the fundamental difficulty of the problem.

The lower bound under the $L_\infty(\mathcal{X})$ risk exhibits the same polynomial rate as the $L_2(\mathcal{X})$ risk, but incurs an additional logarithmic factor. This reflects the increased difficulty of uniform estimation over the covariate space, where the estimation error must be controlled simultaneously across the entire domain $\mathcal{X}$. Correspondingly, the transition between the low- and high-smoothness regimes is shifted by a logarithmic correction of order
\[
\frac{\log \log n}{\log n}.
\]

For both risks, the lower bounds in the high-smoothness regime coincide with the classical minimax rates for nonparametric regression. These rates can be obtained via standard lower-bound constructions for nonparametric regression, such as those in Section 2.6 of \cite{Tsybakov2009intro}. We therefore focus on the low-smoothness regime in the remainder of this section, where the classical nonparametric regression lower bound is no longer optimal and a many-fuzzy-hypothesis construction is applied to establish the minimax rate.

\subsection{Lower bounds based on many fuzzy hypotheses}
A key ingredient in the lower-bound construction of \cite{kennedy2024minimax} is the method of two fuzzy hypotheses. Under this framework, one constructs two mixture distributions over a prior $\pi$,
\[
\int P_{\lambda}^n\,d\pi(\lambda)
\qquad\text{and}\qquad
\int Q_{\lambda}^n\,d\pi(\lambda),
\]
and controls the difficulty of distinguishing between them through bounds on the Hellinger distance, following \cite{robins2009semiparametric}.

Global losses generally require a collection of well-separated target values rather than a single pair. One way to incorporate fuzzy hypotheses into such a construction is through Assouad's lemma. In particular, \cite{bonvini2023minimax} construct an Assouad hypercube in which each vertex is mixed over nuisance perturbations while its CATE remains fixed, and use this construction to derive a lower bound for CATE level-set estimation. They further note that the same construction can be applied to derive a lower bound for CATE estimation under the integrated $L_2(\mathcal{X})$ risk. Thus, an Assouad-style fuzzy-hypothesis argument provides a valid route to the $L_2(\mathcal X)$ lower bound through pairwise Hellinger control between neighboring vertices.

However, Assouad's lemma is limited to losses with a suitable coordinatewise structure and does not extend naturally to the $L_\infty(\mathcal X)$ risk \cite{Tsybakov2009intro}. To treat the $L_2(\mathcal X)$ and $L_\infty(\mathcal X)$ risks within a common framework, we instead combine fuzzy hypotheses with a common-baseline multiple-testing argument. In our construction, each fixed alternative in a standard multiple-hypothesis construction is replaced by a mixture over nuisance perturbations that leaves its CATE unchanged. Because the testing argument compares each resulting mixture with a common baseline mixture, it is naturally formulated through likelihood ratios and calls for control of divergences such as the Kullback--Leibler or $\chi^2$ divergence.

Theorem~\ref{lemma::chi} then provides the required localized $\chi^2$-divergence bound for mixtures of product measures, building on the partition-based approach of Theorem~2.1 of \cite{robins2009semiparametric}. A closely related bound appears in Proposition~2.6 of \cite{liu2021adaptive} in the context of adaptive functional estimation, although its proof appears to require further justification. We give an alternative proof by adapting the conditional mixture argument of \cite{robins2009semiparametric} and using the negative association of multinomial cell counts. The resulting bound depends only on the partition cells where the two mixtures differ, allowing multiple fuzzy alternatives with different localization patterns to be compared with a common baseline.

The main distinction with the existing literature lies in how this localized divergence control is used. Whereas \cite{liu2021adaptive} combines it with a constrained risk inequality for a two-hypothesis lower bound, Theorem~\ref{lemma::lower-bound} below combines it with a multiple-hypothesis testing argument based on a collection of well-separated fuzzy hypotheses. Together, these results provide a unified approach to the lower bounds under both the $L_2(\mathcal X)$ and $L_\infty(\mathcal X)$ risks. To the best of our knowledge, this common-baseline multiple-testing formulation of many fuzzy hypotheses, and its application to the $L_\infty$ risk, have not previously appeared in the minimax lower-bound literature. Although developed here for CATE estimation, the underlying strategy may be useful for a broader class of nonparametric and semiparametric estimation problems.

 \begin{theorem} \label{lemma::chi}
Let $P_\lambda$ and $Q_\lambda$ be probability distributions from $\mathcal{P}$ indexed by
     \[
     \lambda = (\lambda_1, \cdots, \lambda_k) \in
     \Lambda = \Lambda_1 \times \cdots \times \Lambda_k,
     \]
and let $P_\lambda^n$ and $Q_\lambda^n$ denote the corresponding $n$-fold product measures. Let $\mathcal{Z}=\cup_{j=1}^k \mathcal{Z}_j$ denote a partition of the sample space. Assume:
     \begin{itemize}
         \item[1.] For every $\lambda \in \Lambda$, $P_\lambda(\mathcal{Z}_j) = Q_\lambda(\mathcal{Z}_j) = p_j$.
         \item[2.] The conditional distributions obtained by restricting $P_\lambda$ and $Q_\lambda$ to $\mathcal{Z}_j$, denoted by $ P_{j, \lambda_j}$ and $Q_{j, \lambda_j}$, and defined through the densities
\[dP_{j, \lambda_j}= \frac{\mathbbm{1}_{\mathcal{Z}_j} d P_\lambda}{p_j}, \quad dQ_{j,\lambda_j}=\frac{\mathbbm{1}_{\mathcal{Z}_j} d Q_\lambda}{p_j},
         \]
depend only on $\lambda_j$ but not on $\lambda_\ell$ for all $\ell \neq j$. Furthermore, there exists a subset $S \subseteq \{1,\cdots,k\}$ such that for all $j \notin S$, $P_{j, \lambda_j} = Q_{j, \lambda_j}.$ In other words, $ P_{j, \lambda_j}$ and $Q_{j, \lambda_j}$ may differ only for $j \in S$.
     \end{itemize}
Let $p_\lambda$ and $q_\lambda$ denote the densities of $P_\lambda$ and $Q_ \lambda$ with respect to a dominating measure $\nu$. For a product probability measure $\pi = \pi_1 \otimes \cdots \otimes \pi_k$, define $\bar{p} = \int p_\lambda d\pi(\lambda)$, $\bar{q} = \int q_\lambda d\pi(\lambda)$, and
     \begin{align*}
         \delta_1 &= \max_{j \in S} \sup_{\lambda \in \Lambda} \int_{\mathcal{Z}_j} \frac{(p_\lambda-\bar{p})^2}{p_\lambda p_j}d\nu\\
         \delta_2 &= \max_{j \in S} \sup_{\lambda \in \Lambda} \int_{\mathcal{Z}_j} \frac{(q_\lambda-p_\lambda)^2}{p_\lambda p_j}d\nu\\
         \delta_3 &= \max_{j \in S} \sup_{\lambda \in \Lambda} \int_{\mathcal{Z}_j} \frac{(\bar{q}-\bar{p})^2}{p_\lambda p_j}d\nu.
     \end{align*}
Suppose that $\bar{p}/p_\lambda \leq \varepsilon_1 < \infty$ for all $\lambda \in \Lambda$  and $np_j \max(1, \delta_1, \delta_2)\leq \varepsilon_2 < \infty $ for every $j \in S$. Then
     \[
    \chi^2
    \left(
    \int Q_\lambda^n d \pi(\lambda),
    \int P_\lambda^n d \pi(\lambda)
    \right)
    \leq
    \exp \left(
    C n \sum_{j \in S} p_j \left\{n \max_{j \in S} p_j (\delta_2^2+\delta_1 \delta_2) + \delta_3\right\}
    \right)
    -1,
    \]
where $C$ depends only on $\varepsilon_1$ and $\varepsilon_2$.
 \end{theorem}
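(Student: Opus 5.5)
We follow the conditional-mixture strategy of Theorem~2.1 of \cite{robins2009semiparametric}, conditioning on the multinomial cell counts. Let $N=(N_1,\dots,N_k)$ with $N_j=\#\{i: Z_i\in\mathcal{Z}_j\}$. By Assumption~1, $N\sim\mathrm{Mult}(n;p_1,\dots,p_k)$ under every $P_\lambda^n$ and $Q_\lambda^n$, so $N$ has the same law under both mixtures. Conditionally on $N$ and on which indices fall in which cell, the observations in cell $j$ are i.i.d.\ from $P_{j,\lambda_j}$ (resp.\ $Q_{j,\lambda_j}$). By Assumption~2 and the product form of $\pi$, the conditional mixtures factor across cells:
\[
\int Q_\lambda^n\,d\pi \,\Big|\, N \;=\; \bigotimes_{j}\int Q_{j,\lambda_j}^{N_j}\,d\pi_j(\lambda_j),
\]
and similarly for $P$. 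Cells $j\notin S$ contribute identical factors. Since $\chi^2$ tensorizes ($1+\chi^2$ is multiplicative over products) and is jointly convex, conditioning on the common law of $N$ gives
\[
1+\chi^2 \;\le\; \mathbb{E}_N \prod_{j\in S}\Bigl(1+\chi^2\Bigl(\textstyle\int Q_{j,\lambda_j}^{N_j}d\pi_j,\ \int P_{j,\lambda_j}^{N_j}d\pi_j\Bigr)\Bigr).
\]

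The second step is a per-cell bound. For fixed $m=N_j$, we bound the $\chi^2$ between the two $m$-fold mixtures on $\mathcal{Z}_j$ following the computation in \cite{robins2009semiparametric}. Write the likelihood ratio of $\int Q^{m}$ to $\int P^{m}$ and expand $\prod_i q_\lambda(z_i)$ around $\prod_i p_\lambda(z_i)$. We also center $p_\lambda$ around $\bar p$, using $\bar p/p_\lambda\le\varepsilon_1$ to change measures. Squaring and integrating, the first-order terms produce $m\delta_3$. The second-order cross terms produce $m^2(\delta_2^2+\delta_1\delta_2)$, since cross-terms with a single deviation factor vanish after integrating out $\lambda_j$. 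Higher-order terms form a geometric-type series in $m\max(\delta_1,\delta_2)$. Here the normalization by $p_j$ in the $\delta$'s is what makes them conditional divergences. The target is
\[
1+\chi^2_j(m)\le \exp\bigl(C\,[\,m\delta_3+m^2(\delta_2^2+\delta_1\delta_2)\,]\bigr)\cdot R(m),
\]
where the remainder $R(m)$ is controlled by $e^{c\,m\max(1,\delta_1,\delta_2)}$-type factors.

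The third step averages over $N$. We need
\[
\mathbb{E}\exp\Bigl(\sum_{j\in S}[a N_j+bN_j^2]\Bigr)
\]
with $a\asymp\delta_3$ and $b\asymp\delta_2^2+\delta_1\delta_2$, together with the remainder factors. Multinomial counts are negatively associated, and the summands are increasing functions of disjoint coordinates. So the expectation of the product is at most the product of the marginal expectations, reducing to $N_j\sim\mathrm{Bin}(n,p_j)$. For a binomial with $np_j\max(1,\delta_1,\delta_2)\le\varepsilon_2$, the moment generating function bounds are $\mathbb{E}e^{tN_j}\le \exp(np_j(e^t-1))$ and $\mathbb{E}N_j^r\lesssim (np_j)$ for bounded $np_j$. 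These give
\[
\log\mathbb{E}e^{aN_j+bN_j^2}\lesssim np_j\,a+np_j(1+np_j)\,b.
\]
Summing over $j\in S$ and bounding $np_j\le n\max_{j\in S}p_j$ yields the claimed
\[
\exp\Bigl(Cn\sum_{j\in S}p_j\{n\max_{j\in S} p_j(\delta_2^2+\delta_1\delta_2)+\delta_3\}\Bigr)-1.
\]

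The main obstacle is the per-cell step. We need a clean bound that is exponential in $m$ rather than polynomial, with the quadratic term multiplied only by $\delta_2^2+\delta_1\delta_2$ and no bare $\delta_1^2$. It must also hold for all $m$, since $N_j$ is unbounded. Our first attempt is the exact expansion of $\prod_i(1+\Delta(z_i))$ with $\Delta=(q_\lambda-p_\lambda)/p_\lambda$, integrating $\lambda,\lambda'$ independently. We then bound the $r$-th order terms by $\binom{m}{r}$ times products of $\delta$'s, and sum to get $(1+\cdot)^m\le e^{m\cdot}$-type bounds. The constraint $np_j\max(1,\delta_1,\delta_2)\le\varepsilon_2$ is then what keeps the binomial MGF finite after this expansion.
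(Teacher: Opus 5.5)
Your first step is sound: conditioning on the cell labels even gives equality, which is the decomposition the paper takes from Lemma~5.1 of \cite{robins2009semiparametric}. Negative association of the multinomial counts is also what the paper uses. The gap is in the form of your per-cell target and in the averaging built on it.

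\emph{The quadratic exponent cannot be averaged.} A bound with $m^2b$ in the exponent, where $b\asymp\delta_2^2+\delta_1\delta_2$, cannot be integrated against $N_j\sim\mathrm{Bin}(n,p_j)$. The atom $\{N_j=n\}$ alone gives $\mathbb{E}e^{bN_j^2}\ge p_j^n e^{bn^2}=\exp\{n(nb-\log(1/p_j))\}$, and the hypotheses do not force $nb\lesssim\log(1/p_j)$. In the paper's own construction, $b\asymp(h/k^{1/d})^{4s}\asymp h^{2\gamma}$ and $nb\gtrsim h^{-d}$ grows polynomially, while $\log(1/p_j)\asymp\log n$. So your claimed inequality $\log\mathbb{E}e^{aN_j+bN_j^2}\lesssim np_ja+np_j(1+np_j)b$ fails in exactly the regime where the theorem is used; it does not follow from the Poisson-type MGF bound plus bounded moments. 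Note that $\sum_{r\ge2}\binom{m}{r}\delta_2^r\le(1+\delta_2)^m$ grows only like $e^{m\delta_2}$. Rewriting it as $e^{Cm^2\delta_2^2}$ is a valid but fatal over-relaxation.

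\emph{The multiplicative remainder is fatal on its own.} A remainder $R(m)\approx e^{cm\max(1,\delta_1,\delta_2)}$ multiplying $1+\chi_j^2(m)$ contributes $\exp\{c'\sum_{j\in S}np_j\}$ after averaging. This factor carries no $\delta$ at all, and in the $L_2$ construction, where $\sum_{j\in S}p_j\asymp1$, it is of order $e^{cn}$. Some growth factor is genuinely needed, because the Robins et al.\ computation yields the $\delta_3$ contribution with weight $m^2\varepsilon_1^{m-1}$, not as $m\delta_3$ in an exponent. But that factor must multiply $\delta_3$ (or $\delta_2$), not $1$.

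\emph{What works.} Bound $\chi_j^2(m)$ itself, additively, by a quantity that vanishes with the $\delta$'s and grows at most geometrically in $m$. This is Lemma~5.2 of \cite{robins2009semiparametric}, which applies because their (5.2) is exactly the $\chi^2$-divergence between the two mixtures:
\[
\chi_j^2(m)\le X_j(m):=2\sum_{r=2}^{m}\binom{m}{r}\delta_2^r+4m^2\sum_{r=1}^{m-1}\binom{m-1}{r}\delta_1^r\delta_2+4m^2\varepsilon_1^{m-1}\delta_3 .
\]
Take $\varepsilon_1\ge1$ without loss of generality, so each $X_j$ is nonnegative and nondecreasing. Negative association then gives $\mathbb{E}\prod_{j\in S}\{1+X_j(N_j)\}\le\prod_{j\in S}\{1+\mathbb{E}X_j(N_j)\}\le\exp\{\sum_{j\in S}\mathbb{E}X_j(N_j)\}$. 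Here $1+x\le e^x$ is applied only after taking expectations.

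The factorial moments $\mathbb{E}\binom{N_j}{r}=\binom{n}{r}p_j^r\le(np_j)^r/r!$ give, for instance, $\mathbb{E}\sum_{r\ge2}\binom{N_j}{r}\delta_2^r=(1+p_j\delta_2)^n-1-np_j\delta_2\le(np_j\delta_2)^2e^{\varepsilon_2}$. Under $np_j\max(1,\delta_1,\delta_2)\le\varepsilon_2$, the other two terms are similarly of order $(np_j)^2\delta_1\delta_2$ and $np_j\delta_3$. Summing over $j\in S$ gives the stated exponent.
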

\begin{proof}
See Appendix~\ref{app::chi}.
\end{proof}

The following theorem extends the classical multiple-hypothesis lower-bound argument, such as Theorem~2.6 of \cite{Tsybakov2009intro}, to fuzzy hypotheses. In the classical setting, each hypothesis is represented by a single distribution, and the targets are well separated while the divergence of every alternative from a common baseline is controlled. In the fuzzy-hypothesis setting, each fixed distribution is replaced by a mixture over a family of distributions indexed by $\lambda$, while the target $\mathbb S((P^{(i)})_\lambda)$ is required to remain invariant in $\lambda$. Thus, mixing preserves the separation among the targets, and controlling the divergence of each resulting mixture from the baseline mixture yields a minimax lower bound.

\begin{theorem} \label{lemma::lower-bound}
Let $\{(P^{(i)})_\lambda: \lambda \in \Lambda\}$, $i = 0, \cdots, M$, be families of distributions in $\mathcal P$, and let $\pi$ be the prior distribution introduced in Theorem~\ref{lemma::chi}. Let $\mathbb{S}$ denote the mapping from a probability model to the target, and $\operatorname{dist}(\cdot, \cdot)$ be a semi-metric on the target space. Suppose the following conditions hold:
\begin{itemize}
\item[1.]
For each $i=0, \cdots, M$,
\[
\mathbb{S}\bigl((P^{{(i)}})_\lambda\bigr)
=
\mathbb{S}\bigl((P^{{(i)}})_{\lambda'}\bigr),
\qquad
\forall\,\lambda,\lambda^\prime \in \Lambda,
\]
and denote this common value by $\mathbb{S}_{{(i)}}$.

\item[2.] The targets are well-separated in the sense that
\[
\operatorname{dist}(\mathbb{S}_{{(i)}},\mathbb{S}_{{(j)}}) \ge 2r > 0, \quad \forall\,0\le i<j\le M,
\]
for some $r >0$.

\item[3.]
For every $i = 1, \cdots,  M$, $ \int (P^{{(i)}})_\lambda^{n}\,d\pi(\lambda) \ll \int (P^{{(0)}})_\lambda^{n}\,d\pi(\lambda), $ and,
\[
\frac{1}{M}
\sum_{i=1}^M
\chi^2\!\left(
\int (P^{{(i)}})_\lambda^{ n} d\pi(\lambda),
\int (P^{{(0)}})_\lambda^{ n} d\pi(\lambda)
\right)
\le c M,
\]
for some $0<c\le \tfrac12$.
\end{itemize}

Then,
\[
\inf_{\widehat{\mathbb{S}}}
\sup_{P\in\mathcal P}
\mathbb E_{P^{ n}}
\!\left(
\operatorname{dist}\bigl(\widehat{\mathbb{S}},\mathbb{S}(P)\bigr)
\right)
\;\ge\;
\frac{r}{2}\Bigl(1-c-\frac1M\Bigr).
\]
\end{theorem}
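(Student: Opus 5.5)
The plan is to follow the proof of Theorem~2.6 of \cite{Tsybakov2009intro}, with each fixed hypothesis replaced by its mixture. Write $\bar P_i = \int (P^{(i)})_\lambda^n\,d\pi(\lambda)$ for $i=0,\dots,M$.

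\emph{Step 1: reduction to the mixtures.} Fix an estimator $\widehat{\mathbb S}$. Each $(P^{(i)})_\lambda$ lies in $\mathcal P$, and by condition~1 its target equals $\mathbb S_{(i)}$ for every $\lambda$. Hence
\[
\sup_{P\in\mathcal P}\mathbb E_{P^n}\operatorname{dist}(\widehat{\mathbb S},\mathbb S(P))
\ge \max_i \int \mathbb E_{(P^{(i)})^n_\lambda}\operatorname{dist}(\widehat{\mathbb S},\mathbb S_{(i)})\,d\pi(\lambda)
= \max_i \mathbb E_{\bar P_i}\operatorname{dist}(\widehat{\mathbb S},\mathbb S_{(i)}).
\]
Here we use that a supremum dominates a prior average and that the integrand does not depend on $\lambda$ through the target. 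By Markov's inequality this is at least $r\max_i \bar P_i(\operatorname{dist}(\widehat{\mathbb S},\mathbb S_{(i)})\ge r)$.

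\emph{Step 2: reduction to testing.} Define the minimum-distance test $\psi=\arg\min_i \operatorname{dist}(\widehat{\mathbb S},\mathbb S_{(i)})$. Condition~2 and the triangle inequality for the semi-metric give $\{\psi\ne i\}\subseteq\{\operatorname{dist}(\widehat{\mathbb S},\mathbb S_{(i)})\ge r\}$. It then suffices to show
\[
p_{e}:=\max_i\bar P_i(\psi\ne i)\ge \tfrac12\Bigl(1-c-\tfrac1M\Bigr)
\]
for every test $\psi$ taking values in $\{0,\dots,M\}$. Since $\bar P_0,\dots,\bar P_M$ are ordinary probability measures, this is now a classical multiple-testing problem.

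\emph{Step 3: the $\chi^2$ testing bound.} This is the main step. Set $A_i=\{\psi=i\}$, which are disjoint. Because $\bar P_i\ll\bar P_0$ (condition~3), Cauchy--Schwarz gives
\[
\bar P_i(A_i)=\int_{A_i}\tfrac{d\bar P_i}{d\bar P_0}\,d\bar P_0\le \bar P_0(A_i)+\sqrt{\bar P_0(A_i)\,\chi^2(\bar P_i,\bar P_0)}.
\]
Average this over $i=1,\dots,M$. Use $\sum_{i\ge1}\bar P_0(A_i)\le 1$ and Cauchy--Schwarz on the sum of square roots, together with condition~3:
\[
\frac1M\sum_{i=1}^M\bar P_i(A_i)\le \frac1M+\sqrt{\frac1M\cdot\frac1M\sum_i\chi^2(\bar P_i,\bar P_0)}\le \frac1M+\sqrt{c}.
\]
This yields $\max_{i\ge1}\bar P_i(\psi\ne i)\ge 1-\tfrac1M-\sqrt c$. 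The target constant is $(1-c-1/M)/2$, and the factor $\tfrac12$ already appears in the stated bound $\tfrac r2(\cdot)$. So I may alternatively let $\bar P_0(A_0^c)$ absorb part of the loss. The plan is to bound $p_e\ge\max\{\bar P_0(A_0^c),\,1-\tfrac1M-\bar P_0(A_0^c)-\sqrt{c\,\bar P_0(A_0^c)}\}$, or more simply to use the elementary inequality $\sqrt{c}\,t\le (c+t^2)/2$ on the cross term, and then balance the two cases. With $c\le\tfrac12$ this gives at least $\tfrac12(1-c-\tfrac1M)$.

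The main obstacle is the bookkeeping of constants in this case split so that the exact form $\tfrac12(1-c-1/M)$ appears. If the cleanest route gives a slightly stronger constant, I will note that it implies the stated one. Combining Steps 1 to 3 gives
\[
\inf_{\widehat{\mathbb S}}\sup_{P\in\mathcal P}\mathbb E_{P^n}\operatorname{dist}(\widehat{\mathbb S},\mathbb S(P))\ge r\,p_e\ge \tfrac r2\Bigl(1-c-\tfrac1M\Bigr).
\]
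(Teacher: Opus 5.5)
Your Steps 1 and 2 match the paper's proof. The paper also passes from the supremum over $\mathcal P$ to the prior average, hence to the mixtures $\bar P_i$, applies Markov's inequality, and uses the minimum-distance test.

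The gap is in Step 3, which you rightly call the main step but do not close. Your Cauchy--Schwarz bound $\max_{i\ge1}\bar P_i(\psi\ne i)\ge 1-\frac1M-\sqrt c$ is valid. However, it depends on $\sqrt c$ rather than $c$, so it fails to imply the target whenever $(1-\sqrt c)^2<1/M$. For example, with $M=3$ and $c=\frac12$ it is negative, while $\frac12(1-c-\frac1M)=\frac1{12}$. Your fallback is also misstated: averaging $\bar P_0(A_i)$ over $i=1,\dots,M$ gives $\bar P_0(A_0^c)/M$, not $\frac1M+\bar P_0(A_0^c)$. With the expression you wrote, the claimed conclusion is false. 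For $M=2$ and $c=0.1$, the quantity $\max\{p,\,\frac12-p-\sqrt{cp}\}$ is minimized at $p\approx0.18$, and $1-\frac1M-\sqrt c\approx0.18$. Both fall below the target $0.2$. Your AM--GM variant gives $\frac25(1-\frac1M-\frac c2)=0.18$ and fails in the same way. So the sentence ``with $c\le\frac12$ this gives at least $\frac12(1-c-\frac1M)$'' is unsupported as written.

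There are two short repairs.

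First, you can fix the bookkeeping. With $p=\bar P_0(A_0^c)$, the correct bound is
\[
\max_i\bar P_i(\psi\ne i)\ge\max\{p,\,1-\tfrac pM-\sqrt{cp}\}\ge\max\{p,\,1-\tfrac c2-p(\tfrac12+\tfrac1M)\}.
\]
Its minimum over $p$ is $\frac{1-c/2}{3/2+1/M}\ge\frac12-\frac c4\ge\frac12(1-c-\frac1M)$ for $M\ge2$, and the case $M=1$ is trivial. This route even improves the constant.

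Second, you can do what the paper does. After Step 2, the $\bar P_i$ are ordinary probability measures with $\bar P_i\ll\bar P_0$ and the averaged $\chi^2$ bound, so Theorem~2.6 of \cite{Tsybakov2009intro}, via its Propositions~2.2 and~2.4, applies verbatim. There $\chi^2$ enters linearly through Markov's inequality on the likelihood ratio, $\bar P_i(d\bar P_i/d\bar P_0>1/\tau)\le\tau\{1+\chi^2(\bar P_i,\bar P_0)\}$. The factor $\frac12$ is $\frac{\tau M}{1+\tau M}$ evaluated at $\tau=1/M$, so no case split is needed.
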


\begin{proof}
See Appendix~\ref{app::lower-bound}.
\end{proof}

We now construct families of mixtures that remain sufficiently close in $\chi^2$-divergence while their targets are well separated under the $L_2(\mathcal{X})$ and $L_\infty(\mathcal{X})$ losses.

\subsection{Minimax lower bound under the $L_2(\mathcal{X})$ and $L_\infty(\mathcal{X})$ risks}
The construction builds on \cite{kennedy2024minimax}, but differs in one essential respect. Their fuzzy perturbations in $P_\lambda$ and $Q_\lambda$ are localized around a fixed point $x_0$, whereas ours are distributed across the covariate space. We then form the family of alternatives by combining $P_\lambda$ and $Q_\lambda$ over different regions of the sample space.

Formally, let \(\mathcal{C}_h(x)\) denote the cube of side length $h$ centered at \(x \in (0,1)^d\). Suppose that both $A$ and $Y$ are binary. Then each observation $Z$ takes values in $[0,1]^d \times \{0,1\} \times \{0,1\}$. We construct the following partition of the sample space.
\begin{itemize}
    \item Partition $[0,1]^d$ into $m^d$ cubes of equal side length $2h$ with midpoints  $s_1, \cdots,s_{m^d}$, so that $2^dh^d m^d= 1$.
    \item Within each cube $\mathcal{C}_{2h}(s_i)$, partition the inner cube $\mathcal{C}_{h}(s_i)$ into $k$ cubes of equal size with centers $s_{i1}, \cdots, s_{ik}$.
Denote these subcubes by $\mathcal{C}_{h/k^{1/d}}(s_{ij})$.
    \item Let $\mathcal{Z}_{i} = \mathcal{C}_{2h}(s_i) \times \{0,1\} \times \{0,1\}$ for $i=1, \cdots, m^d$, and for each $i$, define $\mathcal{Z}_{ij} = \mathcal{C}_{h/2k^{1/d}}(s_{ij}) \times \{0,1\} \times \{0,1\}$ for $j=1, \cdots, k$.
\end{itemize}

Let $B: \mathbb{R}^d \rightarrow \mathbb{R}$ be a $C^\infty$ function satisfying $B(x)=1$ for $x \in [-1/4, 1/4]^d$, and $B(x)=0$ for $x \notin [-1/2, 1/2]^d$. Let $\lambda =(\lambda_{ij})_{1 \leq i \leq m^d, 1 \leq j \leq k}$ denote a vector indexed by the pairs $(i,j)$ with entries $\lambda_{ij}$. Define
\begin{align*}
    \tau_h(x) =& h^\gamma \sum_{i=1}^{m^d}B(\frac{x-s_{i}}{2h}),\\
    \mu_{0\lambda}(x) =& \frac{1}{2} + (h/k^{1/d})^{\beta} \sum_{i=1}^{m^d} \sum_{j=1}^k \lambda_{ij} B(\frac{x-s_{ij}}{h/k^{1/d}}), \\
    \pi_\lambda(x) =& \frac{1}{2} + (h/k^{1/d})^\alpha \sum_{i=1}^{m^d} \sum_{j=1}^k \lambda_{ij} B(\frac{x-s_{ij}}{h/k^{1/d}}), \\
    f(x) =& 4^d \cdot \mathbbm{1}\left\{ x \in \cup_{i=1}^{m^d} \cup_{j=1}^k \mathcal{C}_{h/2k^{1/d}}(s_{ij}) \right\}.
\end{align*}
Under the model parametrization $(f, \pi, \mu_{0\lambda}, \tau)$, define the distributions $P_\lambda$ and $Q_\lambda$ by
\[
    p_\lambda = (f, 1/2, \mu_{0\lambda}-\tau_h/2, \tau_h), \quad
    q_\lambda = (f, \pi_\lambda, \mu_{0\lambda}, 0).
\]
We next construct a family of mixture distributions by combining $P_\lambda$ and $Q_\lambda$ across the sets $\mathcal{Z}_1, \cdots \mathcal{Z}_{m^d}$. For a mixing vector
\[
w = (w_1, \cdots, w_{m^d}) \in \{0,1\}^{m^d},
\]
define $(P^{w})_\lambda $ through the density
    \[
        (p^{w})_\lambda = \sum_{i=1}^{m^d} \left[(1-w_i) \mathbbm{1}_{\mathcal{Z}_i} p_\lambda +
        w_i \mathbbm{1}_{\mathcal{Z}_i}q_\lambda
        \right],
    \]
and denote the corresponding $n$-fold product measure by $(P^{w})_\lambda^n$. The vector $w^{(0)} = (0, \cdots, 0)^T$ corresponds to the baseline model $P_\lambda$, that is, $(P^{w^{(0)}})_\lambda = P_\lambda$.

Several observations are useful. First, whenever $P_\lambda$ and $Q_\lambda$ satisfy the conditions of Theorem~\ref{lemma::chi}, the same is true for $(P^w)_\lambda$. In particular, for every $\lambda \in \Lambda$,
\[
(P^w)_\lambda(\mathcal{Z}_{ij}) = P_\lambda(\mathcal{Z}_{ij}) = Q_\lambda(\mathcal{Z}_{ij}),
\]
and the conditional distribution on $\mathcal{Z}_{ij}$ depends only on the local coordinate $\lambda_{ij}$. Second, $(P^w)_\lambda$ differs from the baseline $P_\lambda$ only on those regions $\mathcal{Z}_{i}$ for which $w_i=1$, while coinciding with $P_\lambda$ on all regions with $w_{i}=0$. The latter regions play the same role as $[0,1]^d \setminus \mathcal{C}_{2h}(x_0)$ in \cite{kennedy2024minimax}, where the two distributions agree exactly. As a result, the index set $S$ in Theorem~\ref{lemma::chi} contains only those indices $i$ for which $w_i=1$, so that the resulting $\chi^2$-divergence from the baseline is controlled by the Hamming weight of $w$. Third, along the boundary of each $\mathcal{Z}_i$, $\tau_h(x)=0$, $\mu_{0 \lambda}(x) = \pi_\lambda(x) = 1/2$, and therefore $p_\lambda = q_\lambda$. This guarantees that $(P^w)_\lambda$ inherits the required smoothness and avoids introducing discontinuities at the interfaces between regions governed by $P_\lambda$ and $Q_\lambda$. Finally, since $\tau_h$ is independent of $\lambda$, the invariance condition in Theorem~\ref{lemma::lower-bound} holds automatically.

To apply Theorem~\ref{lemma::lower-bound}, we next construct suitable mixing vectors $w^{(0)}, \ldots, w^{(M)}$ and choose $h$ and $k$ so that the targets $\tau(P^{w^{(i)}})$ are well separated, while each mixture $\int(P^{w^{(i)}})_\lambda^n d\pi(\lambda)$ remains sufficiently close in $\chi^2$-divergence to the baseline $\int(P^{w^{(0)}})_\lambda^n d\pi(\lambda)$. The following propositions establish these properties for the two losses.
\begin{proposition} \label{prop::L2}
Let $\pi$ be the uniform distribution on $\{-1,1\}^{m^d \times k}$, and assume $m^d>8$. Let ${w}^{(0)}, {w}^{(1)}, \cdots, {w}^{(M)} \in \{0,1\}^{m^d}$ with ${w}^{(0)} = (0, \cdots, 0)$, and suppose that
    \[
    d_H({w}^{(\ell)}, {w}^{(\ell^{\prime})}) \geq \frac{m^d}{8}, \quad \forall \ 0 \leq \ell < \ell^\prime \leq M,
    \]
where $d_H(\cdot, \cdot)$ denotes the Hamming distance. Let $M \geq 2^{m^d/8}$ as guaranteed by the Varshamov-Gilbert bound (Lemma 2.9 from \cite{Tsybakov2009intro}).
    \begin{itemize}
        \item[(1.)] Suppose $ h^\gamma = 4(h/k^{1/d})^{2s}$,  $h \leq 1/4$, $h^\gamma + 2(h/k^{1/d})^\beta \leq 1-4\epsilon_1$ for some $\varepsilon_1 \in (0,1/4)$, and
$n(4h)^d\max(1, 2\|B\|^2/\varepsilon_1)/k  \leq \varepsilon_2$. Then $\chi^2(  \int (P^{w^{(\ell)}})_\lambda^n d \pi(\lambda), \int (P^{w^{(0)}})_\lambda^n d \pi(\lambda) ) $ is bounded by
        \begin{equation} \label{eq::chi}
            \exp \left(
            4C  \left( \frac{4^d\|B\|^2}{\varepsilon_1}\right)^2 \left(\ \frac{n^2\|w^{(\ell)}\|_0  h^{2d}}{k}\right) \{ (h/k^{1/d})^{4s} + (h/k^{1/d})^{4\alpha} \}
            \right) -1
        \end{equation}
where  $C$ depends only on $\varepsilon_1$ and $\varepsilon_2$.
        \item[(2.)] Let $C^{*} = C \cdot 2^{4d-d/s-1}  \left( {\|B\|^2}/{\varepsilon_1}\right)^2$, choose $a <   \min\{ 1, {\log 2}/{16C^{*}}\}$, and set
$h = ({a}/{n^2})^{{1}/({d+d\gamma/2s+2\gamma})}$. Then, for sufficiently large $n$,
        \[
        \inf_{\hat{\tau}} \sup_{P \in \mathcal{P}} \mathbb{E}_{P^n}\|\hat{\tau} - \tau\|_{L_2(\mathcal{X})}^2
        \gtrsim  n^{-\frac{2}{d/2\gamma+d/4s+1}}.
        \]
    \end{itemize}
\end{proposition}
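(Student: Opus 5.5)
For part (1), I will apply Theorem~\ref{lemma::chi} with $P_\lambda$ as the baseline and $(P^{w^{(\ell)}})_\lambda$ as the alternative. The partition cells are the subcubes $\mathcal{Z}_{ij}$, together with one complementary cell on which $f=0$ and which carries no mass. Each $\mathcal{Z}_{ij}$ has mass $p_{ij}=4^d (h/2k^{1/d})^d = (2h)^d/k \cdot 2^{-d}\cdot 2^d$; up to the constant $4^d$ this is of order $h^d/k$, and it is bounded by $(4h)^d/k$.

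Condition 1 of Theorem~\ref{lemma::chi} holds because $f$ is common to both models. Condition 2 holds because on $\mathcal{Z}_{ij}$ only the bump $B((x-s_{ij})/(h/k^{1/d}))$ is active, so the conditional law depends only on $\lambda_{ij}$. The index set $S$ consists of the pairs $(i,j)$ with $w^{(\ell)}_i=1$, so $\sum_{S}p_{ij}\le \|w^{(\ell)}\|_0\, k\,(4h)^d/k$.

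Next I compute $\delta_1,\delta_2,\delta_3$ exactly as in the pointwise construction of \cite{kennedy2024minimax}, using binary $(A,Y)$. The constraint $h^\gamma+2(h/k^{1/d})^\beta\le 1-4\varepsilon_1$ keeps every cell probability in $[\varepsilon_1,1-\varepsilon_1]$, which gives $\bar p/p_\lambda\le \varepsilon_1^{-1}$-type bounds. Because $p_\lambda$ is linear in $\lambda_{ij}$ with uniform $\pm1$ signs, $\bar p-p_\lambda$ has size $(h/k^{1/d})^\beta$, which yields $\delta_1\lesssim \|B\|^2(h/k^{1/d})^{2\beta}/\varepsilon_1\le 2\|B\|^2/\varepsilon_1$.

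For $\delta_2$, the difference $q_\lambda-p_\lambda$ involves $\tau_h$, the perturbation $(\pi_\lambda-1/2)$ and the product $(\pi_\lambda-1/2)(\mu_{0\lambda}-1/2)$. This gives $\delta_2\lesssim (\|B\|^2/\varepsilon_1)\{h^{2\gamma}+(h/k^{1/d})^{2\alpha}\}$.

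The key point is $\delta_3$. Averaging over $\lambda_{ij}$ kills the terms linear in $\lambda$, and the calibration $h^\gamma=4(h/k^{1/d})^{2s}$ makes the $\tau_h$ contribution cancel against the averaged product $\lambda_{ij}^2(h/k^{1/d})^{\alpha+\beta}$. The result is $\bar q=\bar p$ on the relevant cells, so $\delta_3=0$, or at worst $\delta_3$ is of order $(h/k^{1/d})^{4s}$. Verifying this cancellation, with the precise constant $4$ in the calibration coming from $\mu_0-\tau_h/2$ and $\pi=1/2$ under $P$, is the step I expect to require the most care. With $\delta_3=0$ and $h^{2\gamma}\asymp (h/k^{1/d})^{4s}$, we get $\delta_2^2+\delta_1\delta_2\lesssim (\|B\|^2/\varepsilon_1)^2\{(h/k^{1/d})^{4s}+(h/k^{1/d})^{4\alpha}\}$, using $\delta_1\lesssim (h/k^{1/d})^{2\beta}$. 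Combined with $n\max p_j\le n(4h)^d/k$, this gives exactly \eqref{eq::chi}. The hypothesis $np_j\max(1,\delta_1,\delta_2)\le\varepsilon_2$ is precisely the assumed condition.

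For part (2), I invoke Theorem~\ref{lemma::lower-bound} with the Varshamov--Gilbert vectors. The separation follows from $\|\tau(P^{w})-\tau(P^{w'})\|_{L_2}^2\ge d_H(w,w')\,h^{2\gamma}\int B^2 (2h)^d \gtrsim h^{2\gamma}$, using $d_H\ge m^d/8$ and $m^d(2h)^d=1$. Under this choice, $k$ is determined by the calibration $h^\gamma=4(h/k^{1/d})^{2s}$, so $h/k^{1/d}\asymp h^{\gamma/2s}$ and $k\asymp h^{d-d\gamma/2s}$.

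The exponent in \eqref{eq::chi} then becomes of order $n^2 m^d h^{2d}k^{-1}h^{2\gamma}\asymp m^d\, n^2 h^{d+d\gamma/2s+2\gamma}=m^d a$, where the term $(h/k^{1/d})^{4\alpha}$ is dominated since $\alpha\ge\beta$. Tracking the constants gives $C^*$, so the exponent is at most $C^* a\, m^d \le (\log 2/16) m^d$. Hence the $\chi^2$ divergence is at most $2^{m^d/16}-1\le cM$ with $c\le 1/2$, since $M\ge 2^{m^d/8}$ and $m^d>8$.

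It remains to check the side conditions for large $n$. First, $n h^d/k\asymp n h^{d\gamma/2s}\to0$, because in the low-smoothness regime $h^{d\gamma/2s}\ll n^{-1}$; this is exactly equivalent to $s<\frac{d/4}{1+d/2\gamma}$. Second, $k\ge1$ is needed. Third, $h\le1/4$ and the range constraints hold since $h\to0$. Theorem~\ref{lemma::lower-bound} then gives a lower bound $\gtrsim h^{2\gamma}=n^{-4\gamma/(d+d\gamma/2s+2\gamma)}=n^{-2/(d/2\gamma+d/4s+1)}$, which is the claimed rate.
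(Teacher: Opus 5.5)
Your proposal is correct and follows the paper's proof essentially step for step. You apply Theorem~\ref{lemma::chi} on the cells $\mathcal Z_{ij}$, with $S$ the cells in blocks where $w^{(\ell)}_i=1$ and the $\delta$'s from \cite{kennedy2024minimax}; then you use Varshamov--Gilbert, the calibration $h^\gamma=4(h/k^{1/d})^{2s}$ that turns the exponent into $aC^*m^d\le(\log 2/16)m^d$, separation $\asymp h^\gamma$, and Theorem~\ref{lemma::lower-bound}, followed by Jensen to pass to the squared risk.

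Three minor remarks:

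\begin{itemize}
\item Because $\lambda_{ij}^2\equiv1$, the cancellation $(h/k^{1/d})^{\alpha+\beta}\lambda_{ij}^2=h^\gamma/4$ holds for every $\lambda$, not only after averaging. Hence $q_\lambda-p_\lambda=\pm f\,(h/k^{1/d})^{\alpha}\lambda_{ij}/2$ on $\mathcal Z_{ij}$, and $\delta_2\lesssim(h/k^{1/d})^{2\alpha}$ with no $h^{2\gamma}$ term. Your looser bound on $\delta_2$ is harmless.
\item Your fallback ``$\delta_3$ of order $(h/k^{1/d})^{4s}$'' would \emph{not} suffice. It adds a term of order $nh^{2\gamma}\gg m^d$ to the exponent in the low-smoothness regime, so $\delta_3=0$ is genuinely needed.
\item The separation should be computed on $\mathcal X$ itself, i.e.\ on the $k$ small cubes per block where $\tau_h\equiv h^\gamma$. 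This gives $d_H(w,w')\,h^{2\gamma}h^d2^{-d}$ rather than $d_H(w,w')\,h^{2\gamma}(2h)^d\int B^2$, and affects only constants.
\end{itemize}
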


\begin{proof}
For every region $\mathcal{Z}_i$ satisfying $w^{(\ell)}_{i}=0$,  the mixture distribution $(P^{w^{(\ell)}})_\lambda$ coincides with the baseline distribution $(P^{w^{(0)}})_\lambda$. Consequently, only the subregions $\mathcal{Z}_{ij}$ contained in regions $\mathcal{Z}_{i}$ for which $w^{(\ell)}_{i}=1$ contribute to the $\chi^2$-divergence. On each such subregion, the pair $\left( (P^{w^{(\ell)}})_\lambda, (P^{w^{(0)}})_\lambda \right)$ coincides with the pair $(Q_\lambda, P_\lambda)$ studied in \cite{kennedy2024minimax} up to a normalizing constant depending only on $d$. Therefore, following the same calculations, the quantities $p_{ij}$, $\delta_1$, $\delta_2$, and $\delta_3$ appearing in Theorem~\ref{lemma::chi}  satisfy
    \[
    p_{ij} =  \frac{2^dh^d}{k}, \quad
    \delta_1 \leq \left( \frac{2^d \|B\|^2}{\varepsilon_1}\right) (h/k^{1/d})^{2\beta}, \quad
    \delta_2 \leq \left( \frac{2^{d+1} \|B\|^2}{\varepsilon_1}\right) (h/k^{1/d})^{2\alpha}, \quad
    \delta_3 =0.
    \]
For the partition of the sample space $\cup_{i=1}^{m^d} \cup_{j=1}^k \mathcal{Z}_{ij}$, Theorem~\ref{lemma::chi} immediately yields the bound in (\ref{eq::chi}).

Next, using $\|w^{(\ell)}\|_0 \leq m^d$ together with  $h^\gamma = 4(h/k^{1/d})^{2s}$, we obtain
     \begin{align*}
        \frac{1}{M}
        \sum_{\ell=1}^M
        \chi^2\!\left(
        \int (P^{w^{(\ell)}})_\lambda^{ n}
        \right. & \left.d\pi(\lambda),
        \int (P^{w^{(0)}})_\lambda^{ n} d\pi(\lambda)
        \right) \\
        \leq &
        \exp  \left(
        4C  \left( \frac{4^d\|B\|^2}{\varepsilon_1}\right)^2 \left(\ \frac{n^2 m^d h^{2d}}{k}\right) \{ (h/k^{1/d})^{4s} + (h/k^{1/d})^{4\alpha} \}
        \right) -1 \\
        \leq&
        \exp(C^{*} m^d n^2 h^{d+d\gamma/2s+2\gamma})-1\\
        = &
        \exp\left(
        a C^{*}m^d
        \right) - 1
        < 2^{\frac{m^d}{16}}
        < \frac{1}{4} 2^{m^d/8}
        \leq \frac{1}{4}M
     \end{align*}
for sufficiently large $n$. Let $\tau_w$ denote the CATE corresponding to $(P^w)_\lambda$. By construction,
    \[
    \tau_w = \sum_{i=1}^{m^d}(1-w_i) \tau_h \mathbbm{1}_{\mathcal{C}_{2h}(s_i)}.
    \]
For any $0 \leq \ell < \ell^\prime \leq M$, since $d_H(w^{(\ell)}, w^{(\ell^\prime)}) \geq m^d/8$
    \begin{align*}
    \| \tau_{w^{(\ell)}} - \tau_{w^{(\ell^\prime)}} \|_{L_2(\mathcal{X})}
    = (
    \sum_{i: w_{i}^{(\ell)} \neq w_{i}^{(\ell^\prime)}} \sum_{j=1}^k
    \int_{\mathcal{X}_{ij}} h^{2\gamma} dx
    )^{1/2}\geq \left(
    \frac{m^d}{8} \cdot k \cdot h^{2\gamma} \cdot \frac{h^d}{2^dk}
    \right)^{1/2} =: 2r
    \asymp h^\gamma,
    \end{align*}
where $\mathcal{X}_{ij} = \mathcal{C}_{h/(2k^{1/d})}(s_{ij})$. Applying Theorem~\ref{lemma::lower-bound} to the $L_2(\mathcal X)$ norm and Jensen's inequality gives
    \begin{align*}
        \inf_{\hat{\tau}} \sup_{P \in \mathcal{P}} \mathbb{E}_{P^n}\|\hat{\tau} - \tau\|_{L_2(\mathcal{X})}^2 \geq \left(\inf_{\hat{\tau}} \sup_{P \in \mathcal P}
        \mathbb E_{P^n}\|\hat\tau-\tau\|_{L_2(\mathcal X)}\right)^2
        \gtrsim {r^2}
        \asymp h^{2\gamma}
        \asymp n^{-\frac{2}{d/2\gamma+d/4s+1}},
     \end{align*}
which completes the proof.
\end{proof}

The following proposition gives the lower-bound construction for the $L_\infty(\mathcal{X})$ risk.
\begin{proposition} \label{prop::Linf}
Let $\pi$ be the uniform distribution over $\{-1,1\}^{m^d \times k}$, and set $M=m^d$ and define $w^{(i)} = \mathbf{e}_i \in \mathbb{R}^{m^d}$ for $i=1, \ldots, M$.
    \begin{itemize}
        \item[(1.)] Suppose $ h^\gamma = 4(h/k^{1/d})^{2s}$,  $h \leq 1/4$, $h^\gamma + 2(h/k^{1/d})^\beta \leq 1-4\epsilon_1$ for some $\varepsilon_1 \in (0,1/4)$, and $n(4h)^d\max(1, 2\|B\|^2/\varepsilon_1)/k  \leq \varepsilon_2$.
Then $\chi^2(  \int (P^{w^{(\ell)}})_\lambda^n d \pi(\lambda), \int (P^{w^{(0)}})_\lambda^n d \pi(\lambda) )$ is bounded by
        \begin{equation} \label{eq::chi2}
            \exp \left(
            4C  \left( \frac{4^d\|B\|^2}{\varepsilon_1}\right)^2 \left(\ \frac{n^2  h^{2d}}{k}\right) \{ (h/k^{1/d})^{4s} + (h/k^{1/d})^{4\alpha} \}
            \right) -1
        \end{equation}
where  $C$ depends only on $\varepsilon_1$ and $\varepsilon_2$.
        \item[(2.)] Let $C^{*} = C \cdot 2^{4d-d/s-1}  \left( {\|B\|^2}/{\varepsilon_1}\right)^2$, choose
$0<b <\min\{1, {d}/{(C^{*}d/2+C^{*}d\gamma/4s+C^{*}\gamma)}\}$, and set $h = n^{- 1/({d/2+d\gamma/4s+\gamma})}(b \log n)^{{1}/({d+d\gamma/2s+2\gamma})}$. For sufficiently large $n$,
        \[
        \inf_{\hat{\tau}} \sup_{P \in \mathcal{P}} \mathbb{E}_{P^n}\|\hat{\tau} - \tau\|_{L_\infty(\mathcal{X})}^2
        \gtrsim  n^{-\frac{2}{d/2\gamma+d/4s+1}} (\log n)^{\frac{1}{d/2\gamma+d/4s+1}}.
        \]
    \end{itemize}
\end{proposition}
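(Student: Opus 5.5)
The proof follows the template of Proposition~\ref{prop::L2}, with the Varshamov--Gilbert packing replaced by the $M=m^d$ single-cube alternatives $w^{(i)}=\mathbf e_i$.

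\textbf{Part (1).} For $w^{(\ell)}=\mathbf e_\ell$, the mixture $(P^{w^{(\ell)}})_\lambda$ differs from the baseline $P_\lambda$ only on $\mathcal Z_\ell$. So in Theorem~\ref{lemma::chi}, applied with the partition $\cup_{i,j}\mathcal Z_{ij}$ (plus the complementary null region), the index set is $S=\{(\ell,j): j=1,\dots,k\}$. The local quantities $p_{ij}=2^dh^d/k$, the bounds on $\delta_1,\delta_2$, and $\delta_3=0$ are exactly those computed in the proof of Proposition~\ref{prop::L2}. The side conditions ($\bar p/p_\lambda\le\varepsilon_1$, $np_j\max(1,\delta_1,\delta_2)\le\varepsilon_2$) hold under the stated assumptions for the same reasons. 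Since $\sum_{j\in S}p_j=2^dh^d$ and $n\max p_j\,(\delta_2^2+\delta_1\delta_2)\lesssim (nh^d/k)\{(h/k^{1/d})^{4\alpha}+(h/k^{1/d})^{2\alpha+2\beta}\}$, the bound (\ref{eq::chi}) specializes with $\|w\|_0=1$, giving (\ref{eq::chi2}).

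\textbf{Part (2).} First I verify the $\chi^2$ condition of Theorem~\ref{lemma::lower-bound}.
\begin{itemize}
\item Using $\alpha\ge\beta$, so $(h/k^{1/d})^{4\alpha}\le (h/k^{1/d})^{4s}$, and $(h/k^{1/d})^{2s}=h^\gamma/4$, the relation fixes $k=4^{d/2s}h^{d-d\gamma/2s}$.
\item The exponent in (\ref{eq::chi2}) is then at most $C^*n^2h^{d+d\gamma/2s+2\gamma}$, exactly as in the $L_2$ computation but without the factor $m^d$.
\item With the stated $h$, we get $n^2h^{d+d\gamma/2s+2\gamma}=b\log n$, so each $\chi^2\le n^{C^*b}-1$.
\item The number of hypotheses is $M=m^d=(2h)^{-d}\asymp n^{d/(d/2+d\gamma/4s+\gamma)}$ up to logarithmic factors.
\item The constraint on $b$ gives $C^*b<d/(d/2+d\gamma/4s+\gamma)$. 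Hence $n^{C^*b}\le cM$ for, say, $c=1/2$ and large $n$, and the average $\chi^2$ is at most $cM$.
\end{itemize}

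I also check the feasibility conditions of part (1) for large $n$:
\begin{itemize}
\item $h\to0$ and $h^\gamma\to0$.
\item $h/k^{1/d}\to0$, which needs $\gamma<2s$ ($k\to\infty$). This holds in the low-smoothness regime since $s<d/4/(1+d/2\gamma)$ implies $2s<\gamma$ whenever relevant; if not, I would restrict attention to that regime.
\item $nh^d/k\asymp nh^{d\gamma/2s}\to0$. This is the main place where the regime $s<\frac{d/4}{1+d/2\gamma}(1-\log\log n/\log n)$ enters. Plugging in $h$ gives $nh^{d\gamma/2s}=n^{1-(d\gamma/2s)/(d/2+d\gamma/4s+\gamma)}(b\log n)^{\cdot}$. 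The polynomial exponent is negative iff $s<\frac{d/4}{1+d/2\gamma}$, and the log correction in the threshold is exactly what absorbs the $(\log n)$ power.
\end{itemize}
This bookkeeping is the step I expect to be the main obstacle: checking $np_j\max(1,\delta_1,\delta_2)\le\varepsilon_2$ uniformly, with the log factor, right at the shifted threshold.

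\textbf{Separation and conclusion.} For $0\le i<i'\le M$, the CATEs $\tau_{w^{(i)}}$ and $\tau_{w^{(i')}}$ differ on some cube $\mathcal C_{2h}(s_i)$ by $\tau_h$, which equals $h^\gamma$ on the support subcubes $\mathcal X_{ij}$. Hence $\|\tau_{w^{(i)}}-\tau_{w^{(i')}}\|_{L_\infty(\mathcal X)}\ge h^\gamma=:2r$. Theorem~\ref{lemma::lower-bound} with $c=1/2$ and $M\to\infty$ gives $\inf\sup\mathbb E\|\hat\tau-\tau\|_{L_\infty}\gtrsim h^\gamma$. Jensen's inequality then yields the squared risk $\gtrsim h^{2\gamma}=n^{-2\gamma/(d/2+d\gamma/4s+\gamma)}(b\log n)^{2\gamma/(d+d\gamma/2s+2\gamma)}$. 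Dividing numerator and denominator by $2\gamma$ shows this equals $n^{-2/(d/2\gamma+d/4s+1)}(\log n)^{1/(d/2\gamma+d/4s+1)}$ up to constants, as claimed.
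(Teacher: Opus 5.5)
Your proposal is correct and follows essentially the same route as the paper: you specialize the $L_2$ bound (\ref{eq::chi}) to $\|w^{(\ell)}\|_0=1$, substitute $h$ to get $n^{bC^*}-1\le \tfrac14 m^d$ using the constraint on $b$, and combine the sup-norm separation $h^\gamma$ with Theorem~\ref{lemma::lower-bound} and Jensen's inequality; your feasibility check that $nh^d/k\lesssim 1$ reproduces the shifted threshold, matching the paper's remark after the proof. One small slip: $k\to\infty$ requires $\gamma>2s$ (you wrote $\gamma<2s$, though your next clause correctly derives $2s<\gamma$ from the low-smoothness regime), and in any case $h/k^{1/d}=(h^\gamma/4)^{1/(2s)}\to0$ follows directly from the coupling $h^\gamma=4(h/k^{1/d})^{2s}$.
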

\begin{proof}
The proof of part (1) follows the same argument as Proposition~\ref{prop::L2}. The only difference is that the present construction satisfies $\|w^{(\ell)}\|_0=1$ for $\ell = 1, \cdots, m^d$. With the selected $ w^{(i)}$,
    \[
        2 r :=\|\tau_{w^{(\ell)}} -  \tau_{w^{(\ell^\prime)}}\|_{L_\infty(\mathcal{X})} = h^\gamma, \quad
         \forall \ 0 \leq \ell < \ell^{\prime} \leq m^d.
    \]
Furthermore, part (1) implies
    \begin{align*}
        \frac{1}{M}
        \sum_{\ell=1}^M
        \chi^2\!\left(
        \int (P^{w^{(\ell)}})_\lambda^{ n} d\pi(\lambda),
        \right. & \left.
        \int (P^{w^{(0)}})_\lambda^{ n} d\pi(\lambda)
        \right) \\
        \leq &
        \exp  \left(
        4C  \left( \frac{4^d\|B\|^2}{\varepsilon_1}\right)^2 \left(\ \frac{n^2 h^{2d} }{k}\right) \{ (h/k^{1/d})^{4s} + (h/k^{1/d})^{4\alpha} \}
        \right) -1 \\
        \leq &
        \exp(C^{*}n^2 {h^{d+{d\gamma}/{2s}+2\gamma}} ) - 1  \\
        = &  n^{bC^{*}} -1 \leq \frac{1}{4}  2^{-d} h^{-d} = \frac{1}{4} m^d
     \end{align*}
for sufficiently large $n$. Applying Theorem~\ref{lemma::lower-bound} to the $L_\infty(\mathcal X)$ norm and then Jensen's inequality as above yields
     \[
        \inf_{\hat{\tau}} \sup_{P \in \mathcal{P}} \mathbb{E}_{P^n}\|\hat{\tau} - \tau\|_{L_\infty(\mathcal{X})}^2
        \gtrsim {r^2}  \asymp h^{2\gamma} \asymp n^{-\frac{2}{d/2\gamma+d/4s+1}} (\log n)^{\frac{1}{d/2\gamma+d/4s+1}}.
    \]
\end{proof}
In particular, the lower bounds in Propositions~\ref{prop::L2} and \ref{prop::Linf} apply in the parameter regimes determined by the condition $nh^d/k\lesssim 1$. Substituting the corresponding choices of $h$ and $k$ into this condition yields
\[
s<\frac{d/4}{1+d/(2\gamma)}
\]
for the $L_2(\mathcal X)$ risk, and
\[
s<
\frac{d/4}{1+d/(2\gamma)}
\left(1-\frac{\log\log n}{\log n}\right)
\]
for the $L_\infty(\mathcal X)$ risk. The logarithmic adjustment in the second condition results from the additional $\log n$ factor in the choice of $h$ for the $L_\infty(\mathcal X)$ lower bound.

On the other hand, one may construct a model class that perturbs only $\tau$, while fixing $\mu_0=\pi=1/2$. Classical minimax lower bounds for nonparametric regression (see, for example, Section~2.6 of \cite{Tsybakov2009intro}) then imply
\[
\inf_{\hat{\tau}}\sup_{P\in\mathcal P}
\mathbb E_{P^n}\|\hat{\tau}-\tau\|_2^2
\gtrsim n^{-\frac{2}{2+d/\gamma}},
\quad
\inf_{\hat{\tau}}\sup_{P\in\mathcal P}
\mathbb E_{P^n}\|\hat{\tau}-\tau\|_\infty^2
\gtrsim
n^{-\frac{2}{2+d/\gamma}}
(\log n)^{\frac{2}{2+d/\gamma}}.
\]
Combining these bounds with Propositions~\ref{prop::L2} and \ref{prop::Linf} establishes Theorem~\ref{thm::lower}.

\section{Upper Bound} \label{sec::upper}
In this section, we show that the rates in Theorem~\ref{thm::lower} are attainable using the estimator proposed in \cite{kennedy2024minimax} with modifications to the kernel and second-order basis, under suitable regularity conditions. Following \cite{kennedy2024minimax}, we first introduce the estimator and derive error decompositions under the $L_2(\mathcal X)$ and $L_\infty(\mathcal X)$ risks in Section~\ref{sec::upper-1}. The resulting components are then bounded in the subsequent two subsections. Under the $L_2(\mathcal X)$ risk, a similar bias-variance decomposition applies, and the bounds established in \cite{kennedy2024minimax} continue to hold when the required regularity conditions hold uniformly over $\mathcal X$. Under the $L_\infty(\mathcal X)$ risk, we employ a discretization argument and control the resulting terms using tail bounds for U-statistics together with continuity properties of the kernel and basis functions.

\subsection{Proposed estimator and decomposition} \label{sec::upper-1}
Let $\rho:\mathbb{R}^d  \to \mathbb{R}^q$ denote the polynomial basis used in \cite{kennedy2024minimax}. Specifically, for each coordinate $x_j \ (j=1, \cdots, d)$, define $\rho(x_j) = \{\rho_0(x_j), \cdots, \rho_{\lfloor \gamma \rfloor }(x_j )\}^T$ as the first $(\lfloor \gamma \rfloor+1)$ terms of the Legendre polynomial series (shifted to be orthonormal on $[0,1]$), where
\[
\rho_m(x_j) = \sum_{\ell=0}^m (-1)^{\ell+m} \sqrt{ 2m+1} {m \choose \ell} {m+\ell \choose \ell} x_j^{\ell}.
\]
Define $\rho(x)$ as the collection of all tensor-product basis functions of the form
\[
\prod_{j=1}^d \rho_{\kappa_j} (x_j),
\]
where $(\kappa_1, \cdots, \kappa_d)$ satisfies $\sum_{j=1}^d \kappa_j \leq \lfloor \gamma \rfloor$. The resulting basis has dimension $q={d+\lfloor{\gamma} \rfloor \choose \lfloor{\gamma} \rfloor}$. Let $b_k :\mathbb{R}^d \to \mathbb{R}^{k}$ denote a second-order basis such that $\int b_k(v) b_k(v)^T dv$ has eigenvalues bounded above and below away from zero, and satisfying the additional properties specified in (\ref{eq::b-L2}-\ref{eq::b-row-sum}). For irregular covariate supports, the localized second-order basis and kernel may need to depend on the support geometry. Appendix~\ref{subsec:density_and_construction} provides such constructions and verifies the corresponding localized conditions directly. For simplicity, the main text uses a common reference basis and kernel, localized by the translation and rescaling below. For a generic point in the covariate space $x \in \mathcal{X}$, define the localized bases as
\begin{equation} \label{eq::kpho}
\rho_h^x(X_i) = \rho\!\left(\frac{X_i - x}{h} + \frac{1}{2}\right), \quad
b_{hk}^x(X_i)
= b_k\!\left(\frac{X_i - x}{h} + \frac{1}{2}\right).
\end{equation}
For this common localization, we use the following kernel $K_k$ and its localized version $K_{hk}^x$. Define the univariate bump function
\begin{equation} \label{eq::k-tilde}
\tilde{K}(t) = \frac{g(t)}{g(1-t)+g(t)}
\end{equation}
where $g(t) = \exp(-1/t) \mathbbm{1}\{t > 0\}$. Then $\tilde{K}(t)=0$ for $t \leq 0$, $\tilde{K}(t)=1$ for $t \geq 1$, and $\tilde{K}$ is monotonically increasing on $(0,1)$ with uniformly bounded derivative. For $u=(u_1, \cdots, u_d)$ and $k \geq 2$, define
\[
K_k(u) = \prod_{j=1}^d \tilde{K}(\frac{1/2-|u_j-1/2|}{1/2k^2}), \quad
K_{hk}^x(X_i) = \frac{1}{h^d} K_k( \frac{X_i -x}{h}+\frac{1}{2} ).
\]
Then, $K_{hk}^x$ is a continuous function taking values in $[0, h^{-d}]$ and satisfying
\begin{equation} \label{eq::K-cond1}
K_{hk}^x(X_i) =
\begin{cases}
    h^{-d}, & X_i \in \mathcal{C}_{h(1-1/k^2)}(x) \\
    0, & X_i \notin \mathcal{C}_{h}(x)
\end{cases}
\end{equation}
and, for any $x, x^\prime \in \mathcal{X}$,
\begin{equation} \label{eq::K-lip}
|K_{hk}^x(X_i) - K_{hk}^{x^\prime}(X_i)| \lesssim \frac{k^2}{h^{d+1}} \|x-x^\prime\|.
\end{equation}
The proposed estimator is then defined as
\begin{equation} \label{eq::estimator}
\hat{\tau}(x) = \rho(1/2)^T \hat{Q}_x^{-1}\hat{R}_x,
\end{equation}
where
\[
    \hat{Q}_x = \mathbb{P}_n \{
    \rho_h^x(X_1) K_{hk}^x(X_1) \hat{\varphi}_{a_1}(Z_1) \rho_h^x(X_1)^T
    \} +
    \mathbb{U}_n \{
    \rho_h^x(X_1) K_{hk}^x(X_1)
    \hat{\varphi}_{a_2}^x(Z_1, Z_2)
    K_{hk}^x(X_2)
    \rho_h^x(X_2)^T
    \},
\]
\[
    \hat{R}_x = \mathbb{P}_n \{
    \rho_h^x(X_1) K_{hk}^x(X_1) \hat{\varphi}_{y_1}(Z_1)
    \} +
    \mathbb{U}_n \{
    \rho_h^x(X_1) K_{hk}^x(X_1)
    \hat{\varphi}_{y_2}^x(Z_1, Z_2)
    K_{hk}^x(X_2)
    \},
\]
with the influence functions given by
\begin{align*}
    \hat{\varphi}_{a_1}(Z) & = A\{A-\hat{\pi}(X)\} \\
    \hat{\varphi}_{y_1}(Z) & = \{Y-\hat{\mu}_0(X)\}\{A-\hat{\pi}(X)\}\\
    \hat{\varphi}_{a_2}^x(Z_1, Z_2) & = -\{A_1 - \hat{\pi}(X_1) \}
    b_{hk}^x(X_1)^T\hat{\Omega}_x^{-1}b_{hk}^x(X_2)A_2\\
    \hat{\varphi}_{y_2}^x(Z_1, Z_2) & = -\{A_1 - \hat{\pi}(X_1) \}
    b_{hk}^x(X_1)^T\hat{\Omega}_x^{-1}b_{hk}^x(X_2)\{Y_2 - \hat{\mu}_0(X_2)\},
\end{align*}
where $\hat{\Omega}_x  = \int b_{hk}^x(X) K_{hk}^x(X) b_{hk}^x(X)^Td\hat{F}(X)$. The nuisance estimators $(\hat{F}, \hat{\mu}_0, \hat{\pi})$ are constructed from a separate training sample $D^n$ independent of the sample on which $\mathbb{P}_n$ and $\mathbb{U}_n$ are performed. The constructed estimator $\hat{\tau}(x)$ can be viewed as estimating
\[
\tau_h(x) = \rho(1/2)^T \theta_x,
\]
where $\theta_x$ denotes the coefficient vector of a locally weighted least-squares projection at $x$, defined as
\[
\theta_x=
{\arg\min}_{\beta} \mathbb{E}
\left[
K_{hk}^x(X)\pi(X)
\left(1-\pi(X)\right)
\left(\tau(X) - \beta^T\rho_h^x(X) \right)^2
\right] = Q_x^{-1} R_x
\]
with
\[
Q_x = \int \rho_h^x(X) K_{hk}^x(X)\pi(X)
\left(1-\pi(X)\right)\rho_h^x(X)^T dF(X)
\]
\[
R_x = \int \rho_h^x(X) K_{hk}^x(X)\pi(X)
\left(1-\pi(X)\right)\tau(X) dF(X).
\]
In $\hat{Q}_x$ and $\hat{R}_x$, second-order U-statistic terms are incorporated to further debias the first-order terms in the estimating equation $R_x - Q_x \theta_x =0$. This can be seen more clearly in the proof of Proposition 5 of \cite{kennedy2024minimax}. We refer to their Section~4.1 for additional discussion of the motivation and construction of the estimator.

For fixed $x \in \mathcal{X}$ and $v \in [0,1]^d$, define
\[
dF^{*,x}(v) = f(x+h(v-1/2))dv,
\]
whenever $F$ admits a density $f$, which defines a rescaled version of the covariate distribution in $\mathcal{C}_h(x)$ on $[0,1]^d$. For a function $g: [0,1]^d \rightarrow \mathbb{R}$, define its $K\cdot F^{*,x}$-weighted projection  onto $b_k$ as
\begin{align*}
\Pi_xg(v)
= \ &
b_k(v)^T  \arg \min_{\theta} \int_{[0,1]^d} \left(
g(u)-b_k(u)^T\theta
\right)^2K_{k}(u) dF^{*,x}(u) \\
= \ & b_k(v)^T \Omega_x^{-1} \int_{ [0,1]^d} b_k(u) g(u) K_{k}(u) dF^{*,x}(u),
\end{align*}
where
\[
\Omega_x
=
\int_{ [0,1]^d}K_{k}(u)b_k(u) b_k(u)^T dF^{*,x}(u).
\]
The corresponding estimated version is defined as
\[
\hat{\Pi}_xg(v)
= b_k(v)^T \hat{\Omega}_x^{-1} \int_{ [0,1]^d}  b_k(u) g(u) K_{k}(u) d{F}^{*,x}(u),
\]
where
\[
\hat{\Omega}_x =
\int_{ [0,1]^d} K_{k}(u) b_k(u) b_k(u)^T d\hat{F}^{*,x}(u).
\]

The kernel is designed to satisfy three requirements. First, $|K^x_{hk}(X_i)-K_{hk}^{x^\prime}(X_i)|$ must be controlled by $\|x-x^\prime\|$. The uniform kernel used in \cite{kennedy2024minimax} does not have this property because it is discontinuous at the boundary. Second, $K_k$ is pointwise bounded by the uniform kernel, so the corresponding upper bounds continue to apply. Third, $K_k$ differs from the uniform kernel only on a boundary strip of volume $1-(1-1/k^2)^d\asymp1/k^2$, which is $o(1/k)$ whenever $1/k=o(1)$. Together with the properties of $b_k$ in \eqref{eq::b-sup}, these facts imply that the smallest eigenvalues of $\Omega_x$ and $Q_x$ remain bounded away from zero under conditions analogous to those in Propositions~S5 and~S6 of \cite{kennedy2024minimax}. Sufficient conditions ensuring that the eigenvalues of $Q_x$, $\Omega_x$, and $\hat{\Omega}_x$ are uniformly bounded above and away from zero are discussed in Appendix~\ref{app::eigenvalues}.

For the second-order basis, \cite{kennedy2024minimax} assumes that $b_k(v)$ satisfies the optimal Hölder approximation property with respect to the kernel-weighted $L_2$ norm:
 \begin{equation} \label{eq::b-L2}
    \|(I- \Pi_x)g\|_{K\cdot F^{*,x}}\lesssim k^{-s/d} \text{ for any s-smooth function }g,
\end{equation}
where $\|\cdot\|_{K \cdot F^{*,x}}^2 = \int (\cdot)^2K_{k}(v)dF^{*,x}(v)$. As discussed in \cite{belloni2015some}, this condition holds for a broad class of basis functions under standard regularity conditions, including spline, CDV wavelet, and local polynomial partition series bases. The matching upper bound under the $L_2(\mathcal{X})$ risk can also be established under this condition. To establish the upper bound under the $L_\infty(\mathcal{X})$ norm, we additionally impose the uniform boundedness condition:
\begin{equation} \label{eq::b-sup}
    \sup_{v \in [0,1]^d} \| b_k(v) \|,
    \sup_{v \in [0,1]^d} \| b_k(v) \|_1 \lesssim \sqrt{k},
\end{equation}
and a local smoothness condition in the sense that there exists some constant $L>0$ such that
\begin{equation} \label{eq::b-lip}
    \|b_k(u)-b_k(v)\| \lesssim k^{L}\|u-v\|
\end{equation}
for all $u, v \in [0,1]^d$. Let $[b_k]_j(v)$ denote the $j$-th component of $b_k(v)$. For each coordinate $j=1, \cdots, k$, assume that
\begin{equation} \label{eq::b-support}
    \int_{[0,1]^d} [b_k]_j (v)^2 dv \lesssim 1,  \quad
    {\int_{[0,1]^d}|[b_k]_j(v)|\,dv \lesssim k^{-1/2}}, \quad
    \sup_{v \in [0,1]^d} \sum_{j=1}^k \mathbbm{1}\{ v \in  \operatorname{Supp}([b_k]_j )  \}\lesssim 1
\end{equation}
Thus, each component $[b_k]_j$ has bounded $L_2$ norm and $L_1$ norm of order $k^{-1/2}$, and only a uniformly bounded number of basis functions are nonzero at any fixed $v\in[0,1]^d$.

Finally, we impose the following additional regularity condition on $\Omega_x$ and $\hat{\Omega}_x$: for every $x \in \mathcal{X}$,
\begin{equation} \label{eq::b-row-sum}
\|\Omega_x^{-1}\|_{\infty},
\|\hat{\Omega}_x^{-1}\|_{\infty} \lesssim 1.
\end{equation}
For illustration, Appendix~\ref{subsec:full-support} verifies conditions \eqref{eq::b-sup}--\eqref{eq::b-row-sum} for a tensor-product normalized B-spline basis on $[0,1]^d$ with total dimension $k$. We expect analogous results hold for other locally supported bases, such as wavelets and local polynomial partition series. Henceforth, we assume that $b_k$ satisfies \eqref{eq::b-L2}--\eqref{eq::b-support}, as well as \eqref{eq::b-row-sum} whenever the eigenvalues of $\Omega_x$ and $\hat{\Omega}_x$ are bounded above and away from zero. The following bounds for the projection operators then hold.
\begin{proposition} \label{lemma::basis}
Assume that:
    \begin{itemize}
        \item[1.] For any $x \in \mathcal{X}$, the eigenvalues of $\Omega_x$ are bounded above and away from zero.
        \item[2.] $\|d\hat{F}/dF\|_{L_\infty(\mathcal{X})}$ and $\|dF/d\hat{F}\|_{L_\infty(\mathcal{X})}$ are bounded above.
        \item[3.] For every $x \in \mathcal{X}$, the density $dF$ is bounded above.
    \end{itemize}
Then for bounded function $g(v)$ on $[0,1]^d$,
    \[
    \sup_{x \in \mathcal{X}} \|\hat{\Pi}_x g\|_{K \cdot F^{*,x}}^2 \lesssim 1, \quad
    \sup_{x \in \mathcal{X}}\sup_{v \in [0,1]^d} |{\Pi}_x g(v)| \lesssim 1 , \quad
    \sup_{x \in \mathcal{X}}\sup_{v \in [0,1]^d} |\hat{\Pi}_x g(v)| \lesssim 1
    \]
\end{proposition}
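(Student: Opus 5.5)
The plan is to write each projection through its coefficient vector and bound that vector, first in $\ell_2$ (for the $K\cdot F^{*,x}$-norm bound) and then in $\ell_\infty$ (for the sup bounds). Write
\[
\hat{\Pi}_x g(v) = b_k(v)^T \hat{\Omega}_x^{-1} \hat{c}_x,\qquad \Pi_x g(v)= b_k(v)^T\Omega_x^{-1} c_x,\qquad c_x=\hat c_x=\int_{[0,1]^d} b_k(u) g(u) K_k(u)\, dF^{*,x}(u).
\]
The first step is to show that assumption 2 transfers the eigenvalue bounds from $\Omega_x$ to $\hat{\Omega}_x$. Since $d\hat F^{*,x}/dF^{*,x}$ and its reciprocal are bounded, uniformly in $x$, the quadratic forms satisfy $\theta^T\hat\Omega_x\theta \asymp \theta^T\Omega_x\theta$. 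Hence the eigenvalues of $\hat\Omega_x$ are also bounded above and away from zero uniformly in $x\in\mathcal X$. This also puts us in the regime where the standing assumption \eqref{eq::b-row-sum} is in force for both $\Omega_x$ and $\hat\Omega_x$.

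For the $L_2$ bound, I will use $\|\hat\Pi_x g\|^2_{K\cdot F^{*,x}} = \hat c_x^T \hat\Omega_x^{-1}\Omega_x\hat\Omega_x^{-1}\hat c_x \lesssim \|c_x\|^2$, which uses the bounded eigenvalues of both matrices. It then remains to bound $\|c_x\|$. By Bessel-type reasoning, $\|c_x\|^2 = c_x^T\Omega_x^{-1}c_x \cdot O(1)$, and $c_x^T\Omega_x^{-1}c_x=\|\Pi_x g\|^2_{K\cdot F^{*,x}}\le \|g\|^2_{K\cdot F^{*,x}}\lesssim \|g\|_\infty^2$, because $\Pi_x$ is an orthogonal projection in $L_2(K\cdot F^{*,x})$, $K_k\le1$, and $f$ is bounded (assumption 3). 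Together these give the first claim uniformly in $x$.

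For the sup bounds, I will use Hölder's inequality,
\[
|\Pi_x g(v)| \le \|b_k(v)\|_1\,\|\Omega_x^{-1}c_x\|_\infty \le \|b_k(v)\|_1\,\|\Omega_x^{-1}\|_\infty\,\|c_x\|_\infty .
\]
By \eqref{eq::b-sup} the first factor is $\lesssim\sqrt k$, and by \eqref{eq::b-row-sum} the second is $\lesssim 1$. For the third, each coordinate satisfies $|[c_x]_j|\le \|g\|_\infty \sup f \int|[b_k]_j(u)|\,du\lesssim k^{-1/2}$ by \eqref{eq::b-support} and assumption 3. The product is therefore $O(1)$ uniformly in $x$ and $v$. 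The same chain applied to $\hat\Omega_x^{-1}$ gives the bound for $\hat\Pi_x g$, since $\hat c_x$ integrates against $dF^{*,x}$ and not $d\hat F^{*,x}$.

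The main obstacle is the sup-norm bound. The naive route through $\|b_k(v)\|\,\|\Omega_x^{-1}\|\,\|c_x\|$ only gives $\sqrt k$, so the $\sqrt k$ growth of the basis must be cancelled. This is why I pair the $\ell_1$ norm of $b_k(v)$ with the $\ell_\infty$ norm of the coefficients, and rely on the row-sum condition \eqref{eq::b-row-sum} together with the $k^{-1/2}$ $L_1$ control of each basis element. I also need to verify that \eqref{eq::b-row-sum} indeed applies to $\hat\Omega_x$, which is what the first step provides.
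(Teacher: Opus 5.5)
Your proposal is correct and follows the paper's proof essentially step for step. The paper also bounds the coefficient vector $s_k^x$ (your $c_x$) via $(s_k^x)^T\Omega_x^{-1}s_k^x\le\int g^2K_k\,dF^{*,x}$ and $\lambda_{\max}(\Omega_x)\lesssim1$, writes $\|\hat\Pi_xg\|^2_{K\cdot F^{*,x}}=(s_k^x)^T\hat\Omega_x^{-1}\Omega_x\hat\Omega_x^{-1}s_k^x$ using Lemma~\ref{lemma::eigen-omegahat} for $\hat\Omega_x$, and gets both sup bounds from $\|b_k(v)\|_1\,\|\Omega_x^{-1}\|_\infty\,\|s_k^x\|_\infty$ (respectively $\|\hat\Omega_x^{-1}\|_\infty$), where $\|s_k^x\|_\infty\lesssim k^{-1/2}$ comes from \eqref{eq::b-support}.
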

\begin{proof}
See Appendix~\ref{app::basis}.
\end{proof}

Following Proposition~3 of \cite{kennedy2024minimax}, we next control the approximation error arising from replacing $\tau(x)$ with $\tau_h(x)$. This is formalized in the following proposition.

\begin{proposition}
For any fixed $x \in \mathcal{X}$ and $u \in \mathcal{C}_h(x)$, define the local estimating function
    \begin{equation} \label{eq::tilde-tau}
    \tilde{\tau}_h^x(u) =  \rho_h^x(u)^T \theta_x = \rho_h^x(u)^T Q_x^{-1} R_x.
    \end{equation}
In particular, $\tilde{\tau}_h^x(x) = \tau_h(x) = \rho(1/2)^TQ_x^{-1}R_x$. Assume that:
    \begin{itemize}
        \item[1.] $\tau(x)$ is $\gamma$-smooth.
        \item[2.] $\lambda_{\min}(Q_x)$ is uniformly bounded away from zero for any $x \in \mathcal{X}$.
        \item[3.] $\int \mathbbm{1}\{ X \in \mathcal{C}_h(x) \}dF(X) \lesssim h^d$ uniformly for $x \in \mathcal{X}$.
    \end{itemize}
Then
    \[
    \sup_{x \in \mathcal{X}}\sup_{u \in \mathcal{C}_h(x)} | \tilde{\tau}_h^x(u) - \tau(u)| \lesssim h^\gamma
    \]
and, in particular,
    \[
     \sup_{x \in \mathcal{X}} |{\tau}_h(x) - \tau(x)| \lesssim h^\gamma.
    \]
Consequently,
    \begin{equation} \label{eq::diff-tau-h}
    \| \tau_h - \tau \|^2_{L_2(\mathcal{X})} \lesssim h^{2\gamma}, \quad
    \|\tau_h - \tau\|^2_{L_\infty(\mathcal{X})} \lesssim h^{2\gamma}.
      \end{equation}
\end{proposition}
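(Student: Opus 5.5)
The plan is the standard local-polynomial reproduction argument of Proposition~3 of \cite{kennedy2024minimax}, carried out with every constant tracked uniformly in $x$. Fix $x\in\mathcal X$ and let $\tau_x^*(u)=\sum_{|\kappa|\le\lfloor\gamma\rfloor} D^\kappa\tau(x)(u-x)^\kappa/\kappa!$ be the Taylor polynomial of $\tau$ at $x$. Hölder smoothness gives $|\tau(u)-\tau_x^*(u)|\lesssim \|u-x\|^\gamma\lesssim h^\gamma$ for $u\in\mathcal C_h(x)$. The Hölder constant comes from the model class, so this bound is uniform in $x$.

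Since $\rho_h^x$ spans all polynomials of total degree at most $\lfloor\gamma\rfloor$ in the rescaled variable $(u-x)/h+1/2$, there is a $\beta_x^*$ with $\tau_x^*(u)=\rho_h^x(u)^T\beta_x^*$. The weighted least-squares projection reproduces elements of its span, so $\theta_x-\beta_x^*=Q_x^{-1}\int\rho_h^x K_{hk}^x\pi(1-\pi)(\tau-\tau_x^*)\,dF$. This gives
\[
\tilde\tau_h^x(u)-\tau(u)=\rho_h^x(u)^TQ_x^{-1}\int \rho_h^x(X)K_{hk}^x(X)\pi(X)\{1-\pi(X)\}\{\tau(X)-\tau_x^*(X)\}\,dF(X)+\{\tau_x^*(u)-\tau(u)\}.
\]
The second term is $\lesssim h^\gamma$ by the Taylor bound.

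For the first term, use the following facts:
\begin{itemize}
\item[(i)] $\sup_{u\in\mathcal C_h(x)}\|\rho_h^x(u)\|\lesssim1$, because the rescaled argument lies in $[0,1]^d$ and the shifted Legendre polynomials are bounded there by constants depending only on $d,\gamma$.
\item[(ii)] $\|Q_x^{-1}\|\le 1/\lambda_{\min}(Q_x)\lesssim1$ uniformly, by assumption~2.
\item[(iii)] $0\le K_{hk}^x\le h^{-d}\mathbbm 1_{\mathcal C_h(x)}$ by \eqref{eq::K-cond1}, and $\pi(1-\pi)\le1/4$.
\end{itemize}
Together with the Taylor bound, these give a vector norm for the integral of at most $\lesssim h^{-d}h^\gamma\int\mathbbm1\{X\in\mathcal C_h(x)\}dF(X)\lesssim h^\gamma$, using assumption~3. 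Every constant is independent of $x$ and $u$, so $\sup_x\sup_{u\in\mathcal C_h(x)}|\tilde\tau_h^x(u)-\tau(u)|\lesssim h^\gamma$.

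Setting $u=x$ gives $\rho_h^x(x)=\rho(1/2)$, hence $\tilde\tau_h^x(x)=\tau_h(x)$ and $\sup_x|\tau_h(x)-\tau(x)|\lesssim h^\gamma$. This is the $L_\infty(\mathcal X)$ bound in \eqref{eq::diff-tau-h}. The $L_2(\mathcal X)$ bound follows because $\mathcal X\subset[0,1]^d$ has Lebesgue measure at most one, so $\|g\|_{L_2(\mathcal X)}^2\le\|g\|_{L_\infty(\mathcal X)}^2$.

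The only point needing care is uniformity. It holds because the Taylor remainder constant, the bound on $\rho$ over $[0,1]^d$, and the constants in assumptions~2--3 are all independent of $x$. The smooth kernel $K_k$ enters only through the upper bound in (iii), so the change from the uniform kernel used in \cite{kennedy2024minimax} does not affect the argument.
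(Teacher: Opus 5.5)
Your proposal is correct and follows essentially the same route as the paper, which simply invokes the local-polynomial reproduction argument of Appendix~B.3 of \cite{kennedy2024minimax}. The paper notes that the uniform lower bound on $\lambda_{\min}(Q_x)$ and the uniform local-mass condition make that argument uniform in $x$, and you spell out exactly this, together with the elementary passage from the sup bound to the $L_2(\mathcal X)$ bound.
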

\begin{proof}
The result follows from the argument in Appendix~B.3 of \cite{kennedy2024minimax}. The uniform lower bound on $\lambda_{\min}(Q_x)$ and the uniform density condition make the argument uniform over $x\in\mathcal{X}$.
\end{proof}

Therefore, to control the estimation error of $\hat{\tau}(x)$ relative to $\tau(x)$, it suffices to analyze the error with respect to the surrogate target $\tau_h(x)$, up to an approximation remainder of order $h^{2\gamma}$. The next proposition first reduces this error to that of estimating $Q_x$ and $R_x$, and then derives corresponding decompositions under the $L_2(\mathcal{X})$ and $L_\infty(\mathcal{X})$ risks.

\begin{proposition} \label{prop::decomposition}
Define $S_x = R_x - Q_x(Q_x^{-1}R_x)$ and $\hat{S}_x = \hat{R}_x - \hat{Q}_x(Q_x^{-1}R_x)$. The estimator in (\ref{eq::estimator}) satisfies
\begin{equation} \label{eq::decompose-tau}
    |
    \hat{\tau}(x) - \tau_h(x)
    | =
    |\rho(1/2)^T \hat{Q}_x^{-1} \hat{S}_x|
    \leq \|\rho(1/2)\|
    \left(
    \|Q_x^{-1}\|
    + \|\hat{Q}_x^{-1}-Q_x^{-1}\|
    \right)
    \|\hat{S}_x - S_x\|.
\end{equation}
If $\|Q_x^{-1}\|$ and $\|\hat{Q}_x^{-1}-Q_x^{-1}\|$ are uniformly bounded over $x \in \mathcal{X}$, then the estimation error reduces to controlling $\|\hat{S}_x - S_x\|$. In particular,
\begin{equation} \label{eq::decom-l2}
    \mathbb{E}\|\hat{\tau}-\tau_h\|_{L_2(\mathcal{X})}^2
    \lesssim
    \sup_{x \in \mathcal{X}} \max_{1 \leq j \leq q}
    \mathbb{E}
    \left \{
    \mathbb{E}
    \left(
    [\hat{S}_x - S_x]_j \mid D^n
    \right)^2 +
    \operatorname{Var}([\hat{S}_x]_j \mid D^n )
    \right\}
\end{equation}
and
\begin{align}
    \nonumber
    \mathbb{E}&\|\hat{\tau}-\tau_h\|_{L_\infty(\mathcal{X})}^2  \lesssim
    \mathbb{E} \left\{
    \mathbb{E} \left(
    \max_{1\le \ell \le M}
    \max_{1 \le j \le q}
    \left| [\hat S_{x_\ell}]_j-\mathbb{E}([\hat S_{x_\ell} ]_j \mid D^n) \right|^2
    \mid D^n
    \right) \right. \\
    \label{eq::decom-l_infty}
    &
    \left.
    \quad +
     \mathbb{E}\left(
    \sup_{x, x^\prime \in \mathcal{X}: \|x-x'\|\lesssim M^{-1/d}}
    \|\hat S_x-\hat S_{x'}\|^2
    \mid D^n
    \right)
    +
    \sup_{x \in \mathcal{X}}
     \max_{1 \leq j \leq q}
     \left|
     \mathbb{E} \left( [\hat{S}_x - S_x]_j \mid D^n \right)
     \right|^2
    \right\}.
\end{align}
where $[ \cdot ]_j$ denotes the $j$-th coordinate of a vector, and $x_1,\ldots,x_M$ form a maximal packing of $\mathcal{X}$.
\end{proposition}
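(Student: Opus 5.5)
The plan is to first verify the algebraic identity \eqref{eq::decompose-tau}, and then obtain the two global decompositions from it together with the uniform bounds on $\|Q_x^{-1}\|$ and $\|\hat Q_x^{-1}-Q_x^{-1}\|$.

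\textbf{Step 1: the identity.} By definition $\tau_h(x)=\rho(1/2)^T\theta_x$ with $\theta_x=Q_x^{-1}R_x$, so
\[
\hat\tau(x)-\tau_h(x)=\rho(1/2)^T\hat Q_x^{-1}\bigl(\hat R_x-\hat Q_x\theta_x\bigr)=\rho(1/2)^T\hat Q_x^{-1}\hat S_x .
\]
Since $S_x=R_x-Q_xQ_x^{-1}R_x=0$, we have $\hat S_x=\hat S_x-S_x$. Writing $\hat Q_x^{-1}=Q_x^{-1}+(\hat Q_x^{-1}-Q_x^{-1})$, Cauchy--Schwarz and the triangle inequality for the operator norm give \eqref{eq::decompose-tau}. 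Under the uniform boundedness hypothesis, and because $\|\rho(1/2)\|$ is a constant depending only on $(d,\gamma)$, we get
\[
|\hat\tau(x)-\tau_h(x)|\lesssim\|\hat S_x-S_x\|\le\sqrt q\,\max_j|[\hat S_x-S_x]_j|
\]
uniformly in $x$.

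\textbf{Step 2: the $L_2$ bound.} Square the pointwise bound and integrate over $\mathcal X\subset[0,1]^d$, which has volume at most one. Fubini (Tonelli) lets us exchange $\mathbb E$ and $\int dx$. Then bound $\int_{\mathcal X}\mathbb E\|\hat S_x-S_x\|^2dx\le\sup_x\mathbb E\|\hat S_x-S_x\|^2\le q\sup_x\max_j\mathbb E[\hat S_x-S_x]_j^2$. Conditioning on $D^n$ and using that $S_x$ is deterministic gives the conditional bias--variance split
\[
\mathbb E([\hat S_x-S_x]_j^2\mid D^n)=\mathbb E([\hat S_x-S_x]_j\mid D^n)^2+\operatorname{Var}([\hat S_x]_j\mid D^n),
\]
which yields \eqref{eq::decom-l2}. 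Measurability of $(x,\omega)\mapsto\hat S_x$ follows from the continuity in $x$ of the kernel, the bases, and $\hat\Omega_x^{-1}$.

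\textbf{Step 3: the $L_\infty$ bound by discretization.} Take a maximal $\epsilon$-packing $x_1,\dots,x_M$ of $\mathcal X$ with $\epsilon\asymp M^{-1/d}$. By maximality it is also an $\epsilon$-net, so each $x$ has some $x_\ell$ with $\|x-x_\ell\|\lesssim M^{-1/d}$. Decompose
\[
\hat S_x-S_x=(\hat S_x-\hat S_{x_\ell})+\bigl(\hat S_{x_\ell}-\mathbb E(\hat S_{x_\ell}\mid D^n)\bigr)+\mathbb E(\hat S_{x_\ell}-S_{x_\ell}\mid D^n),
\]
where we again use $S_{x_\ell}=0$. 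Now take the supremum over $x$, apply $(a+b+c)^2\le3(a^2+b^2+c^2)$ and $\|\cdot\|^2\le q\max_j|\cdot|_j^2$, and bound:
\begin{itemize}
\item[(i)] the first term by the supremum of $\|\hat S_x-\hat S_{x'}\|^2$ over pairs with $\|x-x'\|\lesssim M^{-1/d}$;
\item[(ii)] the second term by the maximum over $\ell$ and $j$;
\item[(iii)] the bias term by its supremum over all $x\in\mathcal X$.
\end{itemize}
Taking the conditional expectation given $D^n$ and then the outer expectation gives \eqref{eq::decom-l_infty}.

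\textbf{Main obstacle.} The delicate point is justifying the uniform boundedness of $\|\hat Q_x^{-1}-Q_x^{-1}\|$ over $x$. The statement takes this as a hypothesis. If needed, it can be handled on a high-probability event, using the eigenvalue results of Appendix~\ref{app::eigenvalues} together with the same discretization and continuity argument. The remaining steps are routine.
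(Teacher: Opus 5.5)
Your proposal is correct and follows the paper's argument: the same identity $\hat\tau(x)-\tau_h(x)=\rho(1/2)^T\hat Q_x^{-1}\hat S_x$ with $S_x=0$, Tonelli plus the conditional bias--variance split for the $L_2$ bound, and a maximal-packing discretization for the $L_\infty$ bound. The only difference is in the $L_\infty$ step: you telescope $\hat S_x$ around the nearest packing point before centering, so the oscillation term is uncentered from the start. This avoids the paper's extra Jensen step $\|\mathbb{E}(\hat S_x-\hat S_{x'}\mid D^n)\|^2\le\mathbb{E}(\|\hat S_x-\hat S_{x'}\|^2\mid D^n)$, which the paper needs to remove the conditional mean of the increment.
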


\begin{proof}
See Appendix~\ref{app::decomposition}.
\end{proof}
The $L_2(\mathcal{X})$ bound in (\ref{eq::decom-l2}) closely mirrors the bias-variance decomposition of \cite{kennedy2024minimax}, and each component can be bounded by analogous arguments once the regularity conditions hold uniformly over $x \in \mathcal{X}$. The $L_\infty(\mathcal{X})$ bound in (\ref{eq::decom-l_infty}) is more challenging and requires controlling three distinct components: (i) the maximal stochastic fluctuation over the packing set $\{\hat{S}_{x_k}\}_{k=1}^M$, (ii) the local oscillation of $\hat{S}_{x}$ between sufficiently close points, and (iii) the bias term. While the bias term can be handled by the same arguments as in (\ref{eq::decom-l2}), bounding the other two terms requires additional techniques. Concentration inequalities for U-statistics are used to control the maximal fluctuation term, and a continuity argument, made possible by the modified kernel and basis construction, is applied to bound the local oscillation in $\|\hat{S}_x - \hat{S}_{x'}\|$ under the constraint $\|x-x'\| \lesssim M^{-1/d}$. The next two subsections bound the terms in \eqref{eq::decom-l2} and \eqref{eq::decom-l_infty} and derive the resulting $L_2(\mathcal{X})$ and $L_\infty(\mathcal{X})$ convergence rates.

\subsection{$L_2(\mathcal{X})$ risk} \label{sec::upper-2}
The next three propositions bound the bias and variance terms appearing in (\ref{eq::decom-l2}), following arguments similar to Propositions 5-7 of \cite{kennedy2024minimax}.
\begin{proposition} \label{prop::bias}
Assume:
    \begin{itemize}
        \item[1.] $\lambda_{\max}(\Omega_x)$ is bounded above uniformly for $x \in \mathcal{X}$
        \item[2.] $\hat{\pi}(x)-\pi(x)$ is $\alpha$-smooth
        \item[3.] $\hat{\mu}_0(x) - \mu_0(x)$ is $\beta$-smooth
    \end{itemize}
Then for any $x \in \mathcal{X}$
    \begin{align*}
       \max_{1 \leq j \leq q}\left| \mathbb{E}\left( [\hat{S}_x - S_x]_j \mid D^n\right) \right|
       \lesssim  & (h/k^{1/d})^{2s} + h^\gamma (h/k^{1/d})^{\alpha} \\
       & +
       \left( h^\gamma + \|\hat{\mu}_0 - \mu_0\|_{K \cdot F^{*,x}} \right)
       \left( \|\hat{\pi}-\pi\|_{K \cdot F^{*,x}} \|\hat{\Omega}_x^{-1} - \Omega^{-1}_x\| \right).
    \end{align*}
\end{proposition}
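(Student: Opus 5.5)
We compute the conditional mean of $\hat S_x$ given $D^n$ directly, following the proof of Proposition~5 of \cite{kennedy2024minimax}, and then bound each piece uniformly in $x$. Throughout, $\theta_x=Q_x^{-1}R_x$ and $\tilde\tau_h^x(u)=\rho_h^x(u)^T\theta_x$ is the local fit from \eqref{eq::tilde-tau}. The identities $\mathbb{E}(A\mid X)=\pi$ and $\mathbb{E}(Y\mid X,A)=\mu_0+A\tau$ give
\[
\mathbb{E}\{\hat\varphi_{y_1}-\hat\varphi_{a_1}\tilde\tau_h^x\mid X\}
= (\mu_0-\hat\mu_0)(\pi-\hat\pi) + \pi(1-\pi)(\tau-\tilde\tau_h^x)
+ \pi(\pi-\hat\pi)(\tau-\tilde\tau_h^x)
+ (\text{terms involving } \mu_0-\hat\mu_0).
\]
We group the pieces more carefully by writing $\mu_1-\hat\mu_0-\tilde\tau_h^x$ and similar combinations. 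Subtracting $S_x$ removes the $\pi(1-\pi)(\tau-\tilde\tau_h^x)$ part. The first-order term then leaves
\[
\int \rho_h^x K_{hk}^x (\pi-\hat\pi)\{(\mu_0-\hat\mu_0)+\pi(\tau-\tilde\tau_h^x)\}\,dF .
\]
Independence of $Z_1$ and $Z_2$ lets the U-statistic term factor. Its conditional mean is
\[
-\int\!\!\int \rho_h^x(X_1)K_{hk}^x(X_1)(\pi-\hat\pi)(X_1)\, b_{hk}^x(X_1)^T\hat\Omega_x^{-1}b_{hk}^x(X_2)\,g_x(X_2)K_{hk}^x(X_2)\,dF(X_1)dF(X_2),
\]
where $g_x=(\mu_0-\hat\mu_0)+\pi(\tau-\tilde\tau_h^x)$ is the same second factor.

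Next we rescale to $v\in[0,1]^d$ via $dF^{*,x}$; the $h^{-d}$ factors cancel against the Jacobian. In these coordinates the two pieces combine into
\[
\int \rho(v)K_k(v)(\pi-\hat\pi)^*(v)\,\bigl(I-\hat\Pi_x\bigr)g_x^*(v)\,dF^{*,x}(v),
\]
where $^*$ denotes composition with $u=x+h(v-1/2)$. We split $\hat\Pi_x=\Pi_x+(\hat\Pi_x-\Pi_x)$. The $\Pi_x$ part gives $\langle \rho_j(\pi-\hat\pi)^*,(I-\Pi_x)g_x^*\rangle_{K\cdot F^{*,x}}$. Since $\Pi_x$ is an orthogonal projection in $L_2(K\cdot F^{*,x})$, this equals
\[
\langle (I-\Pi_x)[\rho_j(\pi-\hat\pi)^*],(I-\Pi_x)g_x^*\rangle_{K\cdot F^{*,x}} .
\]
We bound it by Cauchy--Schwarz and the approximation property \eqref{eq::b-L2}. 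The function $\rho_j(\pi-\hat\pi)^*$ is $\alpha$-smooth with Hölder constant of order $h^\alpha$ after rescaling, since the polynomial $\rho_j$ is smooth. This yields the factor $(h/k^{1/d})^\alpha$. Likewise $(\mu_0-\hat\mu_0)^*$ yields $(h/k^{1/d})^\beta$, and the product of the two gives $(h/k^{1/d})^{2s}$.

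For the piece $\pi(\tau-\tilde\tau_h^x)$ we would not use its smoothness; we only need $\|(I-\Pi_x)\cdot\|\le\|\cdot\|$ together with $\sup_{u\in\mathcal C_h(x)}|\tilde\tau_h^x-\tau|\lesssim h^\gamma$ from the preceding proposition. This gives $h^\gamma(h/k^{1/d})^\alpha$. The remaining piece is
\[
\langle \rho_j(\pi-\hat\pi)^*,(\hat\Pi_x-\Pi_x)g_x^*\rangle .
\]
Writing $\hat\Pi_x-\Pi_x=b_k^T(\hat\Omega_x^{-1}-\Omega_x^{-1})\int b_k g_x^*K_k\,dF^{*,x}$, we bound it by
\[
\|\Omega_x^{1/2}\|\,\|(\pi-\hat\pi)^*\|_{K\cdot F^{*,x}}\,\|\hat\Omega_x^{-1}-\Omega_x^{-1}\|\,\|\Omega_x^{1/2}\|\,\|g_x^*\|_{K\cdot F^{*,x}},
\]
using Bessel's inequality for the coefficient vectors and the boundedness of $\rho_j$. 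Assumption~1 controls $\lambda_{\max}(\Omega_x)$ uniformly in $x$. Finally, $\|g_x^*\|\lesssim\|\hat\mu_0-\mu_0\|_{K\cdot F^{*,x}}+h^\gamma$, which is exactly the last term of the statement.

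The main obstacle is the Cauchy--Schwarz/Bessel step with a non-orthonormal $b_k$ under the kernel-weighted measure. We plan to write each inner product as $c^TM\Omega_x^{-1}d$, where $c$ and $d$ are Gram-type coefficient vectors satisfying $\|\Omega_x^{-1/2}c\|\le\|\cdot\|_{K\cdot F^{*,x}}$. We also need the rescaled Hölder constants of $\rho_j(\hat\pi-\pi)^*$ to be of order $h^\alpha$, uniformly in $x$. These uniform checks are routine because all constants in Assumption~1, the smoothness classes, and \eqref{eq::b-L2} are independent of $x$.
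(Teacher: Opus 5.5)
\textbf{Verdict: same reduction as the paper, but with one load-bearing step that is wrong as justified (repairable).} Your reduction is the one the paper intends; its proof simply defers to Proposition~5 of \cite{kennedy2024minimax}. After conditioning on $D^n$:
\begin{itemize}
\item the first-order and U-statistic parts combine into $\int\rho\,K_k(\pi-\hat\pi)^{*,x}(I-\hat\Pi_x)g_x^{*,x}\,dF^{*,x}$, with $g_x=(\mu_0-\hat\mu_0)+\pi(\tau-\tilde\tau_h^x)$;
\item the $\pi(1-\pi)(\tau-\tilde\tau_h^x)$ piece is exactly $R_x-Q_x\theta_x=0$;
\item your Bessel bound $\|\int b_k a K_k\,dF^{*,x}\|\le\lambda_{\max}(\Omega_x)^{1/2}\|a\|_{K\cdot F^{*,x}}$ correctly produces the $\hat\Omega_x^{-1}-\Omega_x^{-1}$ term.
\end{itemize}
Your first display has a spurious extra ``terms involving $\mu_0-\hat\mu_0$'', but the regrouped expression is correct.

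The step that fails is the claim that $\rho_j(\pi-\hat\pi)^{*,x}$ is $\alpha$-smooth with H\"older constant of order $h^\alpha$. Rescaling makes only the \emph{top-order} H\"older seminorm of $(\pi-\hat\pi)^{*,x}$ small. In the product, the variation of the nonconstant polynomial $\rho_j$ multiplies lower-order derivatives of $(\pi-\hat\pi)^{*,x}$, down to the function itself, and these are not small in $h$. For example, take $\lfloor\gamma\rfloor\ge1$, $\alpha<1$, $\hat\pi-\pi\equiv c$, and the degree-one component $\rho_j(v)=\sqrt3(2v_1-1)$. The product is $-c\sqrt3(2v_1-1)$, whose $\alpha$-H\"older seminorm is of order $|c|$ regardless of $h$. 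Feeding such a bound into \eqref{eq::b-L2} gives only $k^{-\alpha/d}$, not $(h/k^{1/d})^\alpha$. Both the $(h/k^{1/d})^{2s}$ and the $h^\gamma(h/k^{1/d})^\alpha$ terms rest on this factor, so neither follows from your argument. Moreover, for a basis that satisfies \eqref{eq::b-L2} but does not reproduce linear functions (e.g.\ piecewise constants when $\alpha<1$), the projection error of this product really is of order $|c|k^{-1/d}$, which can far exceed $(h/k^{1/d})^\alpha$.

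The missing ingredient is local polynomial reproduction of degree $\lfloor\alpha\rfloor+\lfloor\gamma\rfloor$ by the basis; \eqref{eq::b-L2} alone does not suffice. The repair goes as follows:
\begin{itemize}
\item On an $O(1)$-cell neighbourhood of each spline cell (diameter $\asymp k^{-1/d}$ in $v$-coordinates), write $(\pi-\hat\pi)^{*,x}=T+R$, with $T$ the Taylor polynomial of degree $\lfloor\alpha\rfloor$. The remainder involves only the top seminorm, so $|R|\lesssim(h/k^{1/d})^\alpha$.
\item $\rho_jT$ is a polynomial of total degree at most $\lfloor\alpha\rfloor+\lfloor\gamma\rfloor$. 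A local, $L_\infty$-stable spline quasi-interpolant reproduces it whenever the spline order is at least $\lfloor\alpha\rfloor+\lfloor\gamma\rfloor+1$.
\item This yields an element of the span of $b_k$ within sup-distance $O((h/k^{1/d})^\alpha)$ of $\rho_j(\pi-\hat\pi)^{*,x}$. The weighted projection error is no larger, since $\int K_k\,dF^{*,x}\lesssim1$.
\end{itemize}
This is precisely why the paper fixes the spline order $r=\max\{2,\lfloor\alpha\rfloor+\lfloor\beta\rfloor+\lfloor\gamma\rfloor+1\}$. The factor $(\mu_0-\hat\mu_0)^{*,x}$ carries no polynomial multiplier, so your treatment of it is fine. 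So is your treatment of the $h^\gamma$ factor via the preceding proposition.
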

\begin{proof}
The proof follows the same logic as Proposition 5 in \cite{kennedy2024minimax}. In particular, the first two terms are the main bias terms that remain even when the covariate distribution is known. The first term, $(h/k^{1/d})^{2s}$, arises from the product of the projection approximation errors of the nuisance estimation errors $\hat\pi-\pi$ and $\hat\mu_0-\mu_0$, which are of orders $(h/k^{1/d})^\alpha$ and $(h/k^{1/d})^\beta$, respectively. The second term, $h^\gamma(h/k^{1/d})^\alpha$, is the product of the local-polynomial approximation remainder for the CATE and the projection approximation error of $\hat\pi-\pi$.
\end{proof}

By Lemma S2 of \cite{kennedy2024minimax}, if the eigenvalues of $\Omega_x$ are bounded above and away from zero over $x \in \mathcal{X}$, and if both $\|d \hat{F}^{*,x}/dF^{*,x}\|_{L_\infty(F^{*,x})}$ and $\|d F^{*,x}/d\hat{F}^{*,x}\|_{L_\infty(F^{*,x})}$ are bounded uniformly over $x\in\mathcal X$, then
\[
\|\hat{\Omega}_x^{-1}-\Omega_x^{-1}\|
\lesssim
\left\|
\frac{d\hat F^{*,x}}{dF^{*,x}}-1
\right\|_{L_\infty(F^{*,x})}.
\]
Substituting this bound into Proposition~\ref{prop::bias} yields the following refinement.
\begin{proposition} \label{prop::bias-refined}
Under the assumption of Proposition~\ref{prop::bias}, additionally assume
    \begin{itemize}
        \item[1.] $\lambda_{\min}(\Omega_x)$ is bounded below away from zero uniformly for $x \in \mathcal{X}$
        \item[2.] $\|d\hat{F}/dF\|_{L_\infty(\mathcal{X})}$ and  $\|dF/d\hat{F}\|_{L_\infty(\mathcal{X})}$ are bounded above
        \item[3.] For some nonnegative deterministic sequence
$r_n=o(h^\gamma)$, uniformly over $x\in\mathcal X$,
        \[
        \left\|\frac{d\hat F^{*,x}}{dF^{*,x}}-1\right\|_{L_\infty(F^{*,x})}
        \lesssim
        \frac{(h/k^{1/d})^{2s}+r_n}
        {\|\hat\pi-\pi\|_{K\cdot F^{*,x}}
        \bigl(\|\hat\mu_0-\mu_0\|_{K\cdot F^{*,x}}+h^\gamma\bigr)}.
        \]
    \end{itemize}
Then, for sufficiently large $n$ with $h/k^{1/d}\leq1$,
    \[
    \sup_{x \in \mathcal{X}}  \max_{1\leq j \leq q}\mathbb{E}\left( [\hat{S}_x - S_x]_j \mid D^n\right)^2
       {\lesssim (h/k^{1/d})^{4s}+h^{2\gamma}.}
    \]
\end{proposition}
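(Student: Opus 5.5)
Starting from the pointwise bound of Proposition~\ref{prop::bias}, we substitute the Lemma~S2 estimate for $\|\hat\Omega_x^{-1}-\Omega_x^{-1}\|$, use assumption~3 to absorb the troublesome product term, and finally square and take the supremum over $x\in\mathcal X$.

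\emph{Step 1: the matrix-inverse estimate holds uniformly.} Lemma~S2 of \cite{kennedy2024minimax} needs two inputs, both uniform in $x$. The first is that the eigenvalues of $\Omega_x$ are bounded above and away from zero; this is assumption~1 of Proposition~\ref{prop::bias} together with the new assumption~1. The second is that $\|d\hat F^{*,x}/dF^{*,x}\|_{L_\infty(F^{*,x})}$ and its reciprocal are bounded. For this, note that $dF^{*,x}(v)=f(x+h(v-1/2))\,dv$ and similarly for $\hat F^{*,x}$. Hence $d\hat F^{*,x}/dF^{*,x}(v)=(d\hat F/dF)(x+h(v-1/2))$, so its sup norm is at most $\|d\hat F/dF\|_{L_\infty(\mathcal X)}$, which is bounded by the new assumption~2; the reciprocal is handled the same way. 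The constant in Lemma~S2 depends only on these bounds, so uniformly over $x\in\mathcal X$
\[
\|\hat\Omega_x^{-1}-\Omega_x^{-1}\|\lesssim \Bigl\|\frac{d\hat F^{*,x}}{dF^{*,x}}-1\Bigr\|_{L_\infty(F^{*,x})}.
\]

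\emph{Step 2: absorb the product term.} In Proposition~\ref{prop::bias}, the third term is $(h^\gamma+\|\hat\mu_0-\mu_0\|_{K\cdot F^{*,x}})\,\|\hat\pi-\pi\|_{K\cdot F^{*,x}}\,\|\hat\Omega_x^{-1}-\Omega_x^{-1}\|$. Plugging in Step~1 and then assumption~3, the nuisance norms cancel against the denominator, and the term is $\lesssim (h/k^{1/d})^{2s}+r_n$ uniformly in $x$. Two edge cases need comment. If $\|\hat\pi-\pi\|_{K\cdot F^{*,x}}=0$, the product is zero and nothing is required. Otherwise the denominator is positive, since it contains the factor $h^\gamma>0$. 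Therefore, uniformly over $x$ and $j$,
\[
\bigl|\mathbb E([\hat S_x-S_x]_j\mid D^n)\bigr|\lesssim 2(h/k^{1/d})^{2s}+h^\gamma(h/k^{1/d})^\alpha+r_n.
\]

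\emph{Step 3: simplify and square.} Since $h/k^{1/d}\le1$ and $\alpha\ge0$, we have $h^\gamma(h/k^{1/d})^\alpha\le h^\gamma$. Since $r_n=o(h^\gamma)$, for $n$ large enough $r_n\le h^\gamma$. The right-hand side is therefore $\lesssim (h/k^{1/d})^{2s}+h^\gamma$. Squaring with $(a+b)^2\le2a^2+2b^2$ and taking $\sup_x\max_j$ gives the stated bound $(h/k^{1/d})^{4s}+h^{2\gamma}$.

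The main obstacle is Step~1: we must check that the hidden constants in Proposition~\ref{prop::bias} and Lemma~S2 depend on $x$ only through the uniformly controlled quantities (the eigenvalue bounds and the density-ratio bounds). The translation identity for the rescaled density ratios is what makes this transfer from pointwise to uniform control work.
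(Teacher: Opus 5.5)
Your proposal is correct and matches the paper's argument: make Lemma~S2 of \cite{kennedy2024minimax} uniform using the eigenvalue and density-ratio assumptions, substitute it into Proposition~\ref{prop::bias}, and cancel the nuisance norms against assumption~3. Then use $h/k^{1/d}\le 1$ and $r_n=o(h^\gamma)$ before squaring. Your check that $d\hat F^{*,x}/dF^{*,x}$ is a rescaled translate of $d\hat F/dF$, and your handling of the degenerate case $\|\hat\pi-\pi\|_{K\cdot F^{*,x}}=0$, supply details the paper leaves implicit.
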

Here, we impose the slightly stronger condition $r_n=o(h^\gamma)$ rather than $O(h^\gamma)$ to establish the stability of $\hat Q_x^{-1}$. Further discussion can be found after Theorem~\ref{thm::upper}.

For the variance term, Proposition~7 of \cite{kennedy2024minimax} can be adapted to obtain the following bound.

\begin{proposition} \label{prop::var}
Assume:
    \begin{itemize}
        \item[1.] $\int \mathbbm{1}\{X \in \mathcal{C}_h(x) \} dF(X) \lesssim h^d$ for any $x \in \mathcal{X}$
        \item[2.] $y^2$, $\hat{\pi}^2$, $\hat{\mu}_0^2$  are all bounded above
        \item[3.] $\lambda_{\max}(\Omega_x)$ is bounded above for any $x \in \mathcal{X}$
    \end{itemize}
Then
    \[
   \sup_{x \in \mathcal{X}} \max_{1 \leq j \leq q} \operatorname{Var}([\hat{S}_x]_j \mid D^n)
    \lesssim \frac{1}{nh^d} \left(
    1+\frac{k}{nh^d} (1+ \sup_{x \in \mathcal{X}} \|\hat{\Omega}^{-1}_x - \Omega^{-1}_x\|^2)
    \right)
    \]
\end{proposition}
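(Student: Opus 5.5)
The plan is to follow the variance calculation of Proposition~7 of \cite{kennedy2024minimax}, conditioning on the training sample $D^n$ so that $\hat\pi,\hat\mu_0,\hat F$ and hence $\hat\Omega_x$ are fixed. Then to check that every constant appearing there is uniform in $x\in\mathcal X$. Write $[\hat S_x]_j=\mathbb{P}_n\{g_{1,x}\}+\mathbb{U}_n\{g_{2,x}\}$, where
\[
g_{1,x}(Z)=[\rho_h^x(X)]_jK_{hk}^x(X)\{\hat\varphi_{y_1}(Z)-\hat\varphi_{a_1}(Z)\rho_h^x(X)^T\theta_x\},
\]
and $g_{2,x}$ is the analogous kernel built from $\hat\varphi^x_{y_2}-\hat\varphi^x_{a_2}\rho_h^x(X_2)^T\theta_x$. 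Using $\operatorname{Var}(U+V)\le 2\operatorname{Var}(U)+2\operatorname{Var}(V)$, it suffices to bound the first-order and second-order parts separately.

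\textbf{First-order term.} Here $\operatorname{Var}(\mathbb{P}_n g_{1,x}\mid D^n)\le n^{-1}\mathbb{E}(g_{1,x}^2\mid D^n)$. The quantities $\rho_h^x$ (a fixed polynomial evaluated on $[0,1]^d$), $Y$, $\hat\pi$, $\hat\mu_0$ and $\theta_x$ are all bounded. The bound on $\theta_x$ follows because $\tilde\tau^x_h$ is within $h^\gamma$ of the bounded $\tau$, so the coefficients are bounded given the Legendre basis normalization. Also $K_{hk}^x\le h^{-d}\mathbbm{1}_{\mathcal{C}_h(x)}$, so
\[
\mathbb{E}(g_{1,x}^2\mid D^n)\lesssim h^{-2d}\Pr\{X\in\mathcal{C}_h(x)\}\lesssim h^{-d}
\]
by assumption 1. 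This gives the leading $1/(nh^d)$ term uniformly in $x$.

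\textbf{Second-order term.} Apply the Hoeffding decomposition. Its variance is bounded by $\frac{4}{n}\operatorname{Var}(\mathbb{E}[g_{2,x}(Z_1,Z_2)\mid Z_1])+\frac{2}{n(n-1)}\mathbb{E}g_{2,x}^2$. The computation follows Proposition~7 of \cite{kennedy2024minimax}.

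For the linear projection terms, conditioning on $Z_1$ and integrating over $Z_2$ produces an expression of the form $K_{hk}^x(X_1)b_{hk}^x(X_1)^T\hat\Omega_x^{-1}\int b K g\,dF$, that is, essentially $\hat\Pi_x$ applied to a bounded function. Proposition~\ref{lemma::basis} bounds this uniformly, so these terms are again $\lesssim 1/(nh^d)$. If one prefers not to invoke Proposition~\ref{lemma::basis}, write $\hat\Omega_x^{-1}=\Omega_x^{-1}+(\hat\Omega_x^{-1}-\Omega_x^{-1})$ and use $\lambda_{\max}(\Omega_x)\lesssim1$ with Bessel's inequality for the $\Omega_x$-projection. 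The perturbation part contributes a term of order $\|\hat\Omega^{-1}_x-\Omega^{-1}_x\|^2$, which can be absorbed.

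For the quadratic term, compute
\[
\mathbb{E}g_{2,x}^2\lesssim h^{-4d}\iint_{\mathcal{C}_h(x)^2}\bigl(b_{hk}^x(x_1)^T\hat\Omega_x^{-1}b_{hk}^x(x_2)\bigr)^2\,dF\,dF.
\]
After the change of variables to $[0,1]^d$, this equals $h^{-2d}\operatorname{tr}(\hat\Omega_x^{-1}\Omega_x\hat\Omega_x^{-1}\Omega_x)$, up to the density bound and the kernel bound $K_k\le1$. Splitting $\hat\Omega_x^{-1}$ as above and using $\lambda_{\max}(\Omega_x)\lesssim1$, the trace is $\lesssim k(1+\|\hat\Omega_x^{-1}-\Omega_x^{-1}\|^2)$. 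The second-order degenerate part therefore contributes
\[
\frac{k}{n^2h^{2d}}\bigl(1+\|\hat\Omega_x^{-1}-\Omega_x^{-1}\|^2\bigr).
\]
Taking the supremum over $x$ and the maximum over $j\le q$, where $q$ is fixed, yields the stated bound.

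\textbf{Main obstacle.} The delicate step is the trace bound in the quadratic term together with the linear projection term. These are the places where $\hat\Omega_x^{-1}$ enters, and one must avoid assuming $\lambda_{\min}(\Omega_x)$ bounded below, since only $\lambda_{\max}$ is assumed. I would handle this by keeping the $\Omega_x^{-1}$ part inside genuine projections, where the factor $\Omega_x^{1/2}\Omega_x^{-1}\Omega_x^{1/2}=I$ appears. Only the difference $\hat\Omega_x^{-1}-\Omega_x^{-1}$ is then left to be bounded in operator norm, and the density and $\lambda_{\max}$ bounds control everything uniformly over $x\in\mathcal{X}$.
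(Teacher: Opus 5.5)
\textbf{Gap: the first-order projections of the $\mathbb{U}_n$ part.} Your overall route is the paper's: the paper only says to adapt Proposition~7 of \cite{kennedy2024minimax} uniformly in $x$. Your bounds for the $\mathbb{P}_n$ term and for the completely degenerate part are fine, with one correction: use $K_k^2\le K_k$, not $K_k\le 1$, so the trace is taken against $\Omega_x$ (which carries one factor of $K_k$) and the $\Omega_x^{-1}$ part gives exactly $k$.

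The gap is in the first-order Hoeffding projections of the $\mathbb{U}_n$ part, which is exactly where $\hat\Omega_x^{-1}$ enters. Write $\Delta_x=\hat\Omega_x^{-1}-\Omega_x^{-1}$. Let $w$ be the rescaled conditional mean of the outcome residual, a bounded function on $[0,1]^d$, and set $s_x=\int b_kK_kw\,dF^{*,x}$. The $Z_1$-projection then has variance of order $h^{-d}\|\hat\Pi_xw\|^2_{K\cdot F^{*,x}}=h^{-d}s_x^T\hat\Omega_x^{-1}\Omega_x\hat\Omega_x^{-1}s_x$.
\begin{itemize}
\item Your first option, Proposition~\ref{lemma::basis}, needs $\lambda_{\min}(\Omega_x)\gtrsim1$ and two-sided density-ratio bounds (through Lemma~\ref{lemma::eigen-omegahat}). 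None of these is a hypothesis here, and your last paragraph says you want to avoid exactly that.
\item Your second option gives $s_x^T\Omega_x^{-1}s_x+2s_x^T\Delta_xs_x+s_x^T\Delta_x\Omega_x\Delta_xs_x\lesssim 1+\|\Delta_x\|^2$, so the contribution is $\|\Delta_x\|^2/(nh^d)$. The target carries $\|\Delta_x\|^2$ only with prefactor $k/(n^2h^{2d})$. So ``can be absorbed'' holds only if $k\gtrsim nh^d$ or $\|\Delta_x\|\lesssim1$, and you invoke neither.
\end{itemize}

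\textbf{The bound itself fails here, not just your estimate.} Take $F$ uniform, $\tau\equiv0$, $\hat\pi=\pi$, $\hat\mu_0=\mu_0-c$, and a $b_k$ that reproduces constants (for example B-splines). Take $d\hat F/dF=\eta$ on $\mathcal C_h(x)$, so $\hat\Omega_x=\eta\Omega_x$ and $\|\Delta_x\|\asymp\eta^{-1}$. The $Z_2$-projection vanishes because $\hat\pi=\pi$. The total linear part of $[\hat S_x]_j$ is then $\mathbb{P}_n$ applied to $[\rho_h^x]_jK_{hk}^x(A-\pi)\{Y-\mu_0+c(1-\eta^{-1})\}$, whose variance is of order $\eta^{-2}/(nh^d)$. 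The stated right-hand side is of order $(nh^d)^{-1}\{1+k\eta^{-2}/(nh^d)\}$. When $\eta\to0$ and $k=o(nh^d)$, the variance exceeds it.

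\textbf{How to close it.} The three listed hypotheses do not suffice, so you need one extra input. Either is enough for the paper's rates, since both are available where this proposition is used.
\begin{itemize}
\item Note that both tuning regimes in Section~\ref{sec::upper} have $k\gtrsim nh^d$, so $\|\Delta_x\|^2/(nh^d)\le k\|\Delta_x\|^2/(n^2h^{2d})$.
\item Or use $\|dF/d\hat F\|_{L_\infty(\mathcal X)}\lesssim1$ from Theorem~\ref{thm::upper}. This gives $\hat\Omega_x\succeq c_0\Omega_x$, hence $\Omega_x^{1/2}\hat\Omega_x^{-1}\Omega_x^{1/2}\preceq c_0^{-1}I$ and $\|\hat\Pi_xw\|_{K\cdot F^{*,x}}\le c_0^{-1}\|w\|_{K\cdot F^{*,x}}$. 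No lower eigenvalue bound is needed.
\end{itemize}
The same fix is needed for the $Z_2$-projection, which involves $\hat\Pi_x([\rho]_j(\pi^{*,x}-\hat\pi^{*,x}))$. Your Hoeffding variance formula omits this projection, but the kernel is asymmetric, so it must be included.

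\textbf{A hypothesis you import.} Your boundedness of $\theta_x$ comes from the approximation proposition, which assumes $\inf_x\lambda_{\min}(Q_x)\gtrsim1$. This is also not listed here, though the paper relies on the same fact in the proof of Proposition~\ref{prop::l_inf_2}.
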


Combining (\ref{eq::diff-tau-h}) and (\ref{eq::decom-l2}) with the bounds of Proposition~\ref{prop::bias-refined} and~\ref{prop::var}, and assuming all of their conditions hold, we obtain
\[
\mathbb{E}_{P^n}\|\hat{\tau} - \tau \|_{L_2(\mathcal{X})}^2 \lesssim h^{2\gamma} + (h/k^{1/d})^{4s}+
{\frac{1}{nh^d}(1+\frac{k}{nh^d})},
\]
which coincides with the squared version of the bound derived in \cite{kennedy2024minimax}. In the low-smoothness regime $s < (d/4)/(1+d/2\gamma)$, choosing
\[
h \asymp n^{-\frac{1/\gamma}{d/2\gamma+d/4s+1}}, \quad
k \asymp n^{\frac{(d/2s-d/\gamma)}{d/2\gamma+d/4s+1}}
\]
yields the overall rate
\[
\mathbb{E}_{P^n} \|\hat{\tau}-\tau\|_{L_2(\mathcal{X})}^2
\lesssim n^{-\frac{2}{d/2\gamma+d/4s+1}}.
\]
On the other hand, in the high smoothness regime $s > (d/4)/(1+d/2\gamma)$, taking
\[
h \asymp n^{-\frac{1}{2\gamma+d}}, \quad
k \asymp nh^d
\]
gives
\[
\mathbb{E}_{P^n} \|\hat{\tau}-\tau\|_{L_2(\mathcal{X})}^2
\lesssim n^{-\frac{2}{2+d/\gamma}}.
\]
The resulting bounds under the $L_2(\mathcal{X})$ risk  are summarized in Theorem~\ref{thm::upper}.

\subsection{$L_\infty(\mathcal{X})$ risk} \label{sec::upper-3}
We exploit the structure of $[\hat{S}_x]_j$ to control the maximal stochastic fluctuation over the packing set $\{x_\ell\}_{\ell=1}^M \subset \mathcal{X}$, namely,
\[
\mathbb{E}\left(
\max_{1 \leq \ell \leq M} \max_{1 \leq j \leq q}
\left|
[\hat{S}_{x_\ell}]_j - \mathbb{E}([\hat{S}_{x_\ell}]_j \mid D^n)
\right|^2 \mid D^n
\right).
\]
For any fixed $x$, $[\hat{S}_x]_j$ can be represented as a second-order U-statistic with a non-degenerate kernel. This representation allows us to apply concentration inequalities for U-statistics, together with a maximal inequality over the packing set, to derive a bound for the maximal fluctuation. The resulting proposition is stated below, and the full proof is deferred to Appendix~\ref{app::l_inf_1}.

\begin{proposition} \label{prop::l_inf_1}
Under the conditions of Proposition~\ref{lemma::basis} and Proposition~\ref{prop::bias-refined}, suppose
    \begin{itemize}
        \item[1.] $y^2$, $\hat{\mu}_0^2$, and $\hat{\pi}^2$ are bounded above,
        \item[2.] $n$, $h$, and $k$ satisfy $nh^d \gtrsim (\log n)^2$, $k\gtrsim\log n$, and $k (\log n)^3 \lesssim (nh^d)^2$,
        \item[3.] $M$ is a polynomial in $n$ of fixed degree.
    \end{itemize}
Then, the first term in (\ref{eq::decom-l_infty}) can be bounded as
    \begin{align*}
     \mathbb{E}\left(
    \max_{1 \leq \ell \leq M} \max_{1 \leq j \leq q}
    \left|
    [\hat{S}_{x_\ell}]_j - \mathbb{E}([\hat{S}_{x_\ell}]_j \mid D^n)
    \right|^2 \mid D^n
    \right)
    \lesssim
    \frac{k \log n}{n^2 h^{2d}} + \frac{\log n}{nh^d}.
    \end{align*}
\end{proposition}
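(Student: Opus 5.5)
We work conditionally on the training sample $D^n$, so that $\hat\pi,\hat\mu_0,\hat F$ and $\hat\Omega_x$ are fixed. For each fixed $x$ and coordinate $j$ we write $[\hat S_x]_j$ as a second-order U-statistic,
\[
[\hat S_x]_j=\mathbb U_n\{H_{x,j}(Z_1,Z_2)\},
\]
where $H_{x,j}$ is the symmetrized kernel. It collects the first-order pieces $\rho_h^x K_{hk}^x(\hat\varphi_{y_1}-\hat\varphi_{a_1}\rho_h^{x\,T}\theta_x)$, placed into the kernel as $\tfrac12\{g(Z_1)+g(Z_2)\}$, and the second-order pieces involving $b_{hk}^x(X_1)^T\hat\Omega_x^{-1}b_{hk}^x(X_2)$. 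We then take the Hoeffding decomposition
\[
[\hat S_x]_j-\mathbb E([\hat S_x]_j\mid D^n)=\frac{2}{n}\sum_i H^{(1)}_{x,j}(Z_i)+\mathbb U_n\{H^{(2)}_{x,j}\},
\]
with linear projection $H^{(1)}$ and degenerate part $H^{(2)}$. The linear term is handled by Bernstein's inequality and the degenerate term by a Gin\'e--Lata{\l}a--Zinn/Houdr\'e--Reynaud-Bouret type exponential inequality for canonical U-statistics.

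\textbf{Linear term.} Boundedness of $y,\hat\pi,\hat\mu_0$, of $\rho$ on $[0,1]^d$, and of $\theta_x$ (via $\lambda_{\min}(Q_x)$) controls the first-order part. For the second-order part, its conditional expectation in one argument is
\[
-\{A_1-\hat\pi(X_1)\}K_{hk}^x(X_1)\,b_{hk}^x(X_1)^T\hat\Omega_x^{-1}\int b_{hk}^x K_{hk}^x\{\cdots\}\,dF,
\]
which is a projection $\hat\Pi_x g$ of a bounded function. By Proposition~\ref{lemma::basis} it is bounded by a constant times $K_{hk}^x$. Hence $|H^{(1)}_{x,j}|\lesssim h^{-d}$ and, using $F(\mathcal C_h(x))\lesssim h^d$, $\operatorname{Var}(H^{(1)}_{x,j})\lesssim h^{-d}$. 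Bernstein's inequality then gives a tail $\exp(-c\,t^2/\{(nh^d)^{-1}+t(nh^d)^{-1}\})$. A union bound over the $Mq$ pairs with $\log(Mq)\asymp\log n$ (since $M$ is polynomial in $n$), followed by integrating the tail, gives
\[
\mathbb E\Bigl(\max_{\ell,j}|\text{linear}|^2\Bigm| D^n\Bigr)\lesssim \frac{\log n}{nh^d}+\Bigl(\frac{\log n}{nh^d}\Bigr)^2.
\]
Since $nh^d\gtrsim(\log n)^2$, this is $\lesssim \log n/(nh^d)$.

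\textbf{Degenerate term.} This is the main obstacle. We need the four quantities entering the exponential inequality for $H^{(2)}$:
\begin{itemize}
\item $A=\|H^{(2)}\|_\infty$: by \eqref{eq::b-sup} and \eqref{eq::b-row-sum}, $|b^T\hat\Omega^{-1}b|\le\|b\|_1\|\hat\Omega^{-1}\|_\infty\|b\|_\infty\lesssim k$, so $A\lesssim k h^{-2d}$.
\item $B^2=\sup_{z}\mathbb E H^{(2)}(z,Z)^2$: using $\|\hat\Omega^{-1}b(v)\|\lesssim\sqrt k$ and the eigenvalue bound on $\int bb^T$, we get $B^2\lesssim k h^{-3d}$.
\item $C^2=\mathbb E (H^{(2)})^2$: as in Proposition~\ref{prop::var}, $C^2\lesssim k h^{-2d}$.
\item $D$, the operator-norm quantity $\sup\{\mathbb E H f g:\|f\|,\|g\|\le1\}$: since $H^{(2)}$ is essentially a kernel-weighted projection operator, $D\lesssim h^{-d}$, using the eigenvalue bounds and the bounded-overlap condition in \eqref{eq::b-support}.
\end{itemize}
The inequality bounds the tail of $n^{-1}(n-1)^{-1}\sum_{i\ne j}H^{(2)}(Z_i,Z_j)$ at level $u$ by the four regimes $C u^{1/2}/n$, $D u/n$, $B u^{3/2}/n^{3/2}$ and $A u^2/n^2$. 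Taking $u\asymp\log n$ after the union bound, the squared contributions are
\[
\frac{k\log n}{n^2h^{2d}},\quad \frac{(\log n)^2}{n^2h^{2d}},\quad \frac{k(\log n)^3}{n^3h^{3d}},\quad \frac{k^2(\log n)^4}{n^4h^{4d}}.
\]
The second is dominated by the first because $k\gtrsim\log n$. The third is at most the first times $(\log n)^2/(nh^d)\lesssim 1$. The fourth is at most the first times $k(\log n)^3/(nh^d)^2\lesssim1$. The expected maximum squared is obtained by integrating these tails, $\mathbb E\max\le\int_0^\infty \Pr(\max^2>t)\,dt$, which absorbs constants.

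Adding the linear and degenerate bounds gives $k\log n/(n^2h^{2d})+\log n/(nh^d)$, as claimed. The delicate points we expect are the estimates for $B$ and $D$: both require the row-sum condition \eqref{eq::b-row-sum} and the local-support condition \eqref{eq::b-support}, and plain eigenvalue bounds would lose a factor of $k$. All of these bounds also have to be uniform over $x$, which the uniform hypotheses provide.
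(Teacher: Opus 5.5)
Your proposal is correct and follows essentially the same route as the paper. Conditional on $D^n$, you take a Hoeffding decomposition of $[\hat S_x]_j$ and apply Bernstein's inequality to the linear part. For the completely degenerate part you use the exponential inequality with the same constants $A\lesssim k/h^{2d}$, $B^2\lesssim k/h^{3d}$, $C^2\lesssim k/h^{2d}$, $D\lesssim h^{-d}$. You finish with a union bound over the $Mq$ packing/coordinate pairs and tail integration. The only structural difference is that the paper keeps the two non-symmetric first-order projections separate instead of symmetrizing, which is immaterial.

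One small correction to your closing remark: the row-sum condition \eqref{eq::b-row-sum} is not what drives $B$ or $D$. Those bounds come from \eqref{eq::b-sup}, the bounded-overlap condition, and the eigenvalue bounds on $\Omega_x$ and $\hat\Omega_x$. The row-sum condition's real role is the $O(1)$ sup-norm bound on $\hat\Pi_x g$ in Proposition~\ref{lemma::basis}, which keeps the linear-term envelope at $h^{-d}$ rather than $\sqrt{k}\,h^{-d}$.
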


The next proposition controls the local oscillation of $\hat{S}_{x}$ between sufficiently nearby points. The key observation is that the continuity of the kernel $K_k$ and the basis functions $\rho$ and $b_k$ allows $\| \hat{S}_{x} - \hat{S}_{x^\prime} \|$ to be controlled whenever $\|x - x^\prime\|$ is sufficiently small. In particular, under the continuity conditions (\ref{eq::K-lip}) and (\ref{eq::b-lip}), the local oscillation can be uniformly bounded by $k/n^2h^{2d}$ over pairs $x, x'\in \mathcal{X}$ satisfying $  \|x-x'\| \lesssim M^{-1/d}$, provided that $M^{1/d} \gtrsim \frac{n k^{L+7/2}}{h^{2d+1}}$. This is formally stated in the proposition below, with the full proof deferred to Appendix~\ref{app::l_inf_2}.
\begin{proposition} \label{prop::l_inf_2}
Assume that
    \begin{itemize}
        \item[1.] $\lambda_{\min}(Q_x)$ and $\lambda_{\min}({\Omega}_x)$ are uniformly bounded away from zero for $x \in \mathcal{X}$
        \item[2.]  $y$, $\hat{\pi}$, $\hat{\mu}_0$, and $\|dF/d\hat{F}\|_{L_\infty(\mathcal{X})}$ are bounded above
        \item[3.]  $M^{1/d} \gtrsim  \frac{n k^{L+7/2}}{h^{2d+1}}$
    \end{itemize}
Then
    \[
     \mathbb{E}\left(
    \sup_{\|x-x'\|\lesssim \,M^{-1/d}}
    \|(\hat S_x-\hat S_{x'})\|^2
    \mid D^n
    \right) \lesssim \frac{k}{n^2h^{2d}}
    \]
\end{proposition}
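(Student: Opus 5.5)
Since $\hat{\tau}$ is built on a separate training sample, I will condition on $D^n$ throughout. The plan is a deterministic Lipschitz bound, holding for every realization of the sample and every pair $x,x'$:
\[
\|\hat S_x-\hat S_{x'}\|\le \mathcal{L}\,\|x-x'\|,
\]
with $\mathcal{L}\lesssim n k^{L+7/2} h^{-(3d+1)}\cdot\sqrt{k}$ or a comparable polynomial in $(n,k,h^{-1})$. Given such a bound, on $\|x-x'\|\lesssim M^{-1/d}$ condition~3 gives $\mathcal{L}M^{-1/d}\lesssim \sqrt{k}/(nh^d)$. Squaring yields the claimed $k/(n^2h^{2d})$, and the conditional expectation is then trivial. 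So the whole proof reduces to tracking the Lipschitz constant in $x$ of every factor in $\hat S_x=\hat R_x-\hat Q_x\theta_x$.

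\textbf{First-order terms.} I will write $\hat S_x$ as a sum of first-order pieces $\mathbb{P}_n\{\rho_h^x K_{hk}^x(\hat\varphi_{y_1}-\hat\varphi_{a_1}\rho_h^{x\,T}\theta_x)\}$ and the analogous U-statistic pieces. Each summand is a product of factors that depend on $x$, and I will telescope $\prod_i a_i(x)-\prod_i a_i(x')$, bounding each difference times the sup of the other factors. The needed inputs are:
\begin{itemize}
\item $\rho$ is a fixed polynomial, so $\rho_h^x$ is bounded and $h^{-1}$-Lipschitz in $x$ on the relevant cube.
\item $K_{hk}^x$ is bounded by $h^{-d}$ and Lipschitz with constant $k^2h^{-d-1}$ by (\ref{eq::K-lip}).
\item The outcome-type factors $\hat\varphi_{a_1},\hat\varphi_{y_1}$ do not depend on $x$ and are bounded by condition~2.
\item $\theta_x=Q_x^{-1}R_x$ is bounded and Lipschitz. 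Both $Q_x$ and $R_x$ are integrals of $x$-Lipschitz integrands, with constant $\lesssim k^2h^{-1}$ after the $h^{-d}$ cancels against $dF$. Using $Q_x^{-1}-Q_{x'}^{-1}=Q_x^{-1}(Q_{x'}-Q_x)Q_{x'}^{-1}$ together with condition~1 then gives a polynomial Lipschitz constant.
\end{itemize}

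\textbf{Second-order terms.} The U-statistic terms additionally contain $b_{hk}^x(X_1)^T\hat\Omega_x^{-1}b_{hk}^x(X_2)$. By (\ref{eq::b-sup}) and (\ref{eq::b-lip}), $b_{hk}^x$ is bounded by $\sqrt k$ and Lipschitz with constant $k^L/h$. For $\hat\Omega_x^{-1}$ I use the resolvent identity again. Here $\|\hat\Omega_x-\hat\Omega_{x'}\|\lesssim (k\cdot k^2/h+ \sqrt{k}k^L/h)\|x-x'\|$, via the change of variables to $[0,1]^d$ and the bounded density ratio $dF/d\hat F$. I also need $\lambda_{\min}(\hat\Omega_x)$ bounded below, which follows from condition~1 on $\Omega_x$ and the boundedness of $dF/d\hat F$ (the comparison $\hat\Omega_x\succeq c\,\Omega_x$). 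The kernel pair contributes $h^{-2d}$ in sup and $k^2h^{-2d-1}$ in Lipschitz constant. Multiplying everything gives a bound of the form $\mathcal{L}\lesssim k^{L+3}h^{-2d-1}\cdot(\text{powers of }k)$. The exact exponent $k^{L+7/2}$ should emerge from $\|b\|^2\lesssim k$, $k^L$ from (\ref{eq::b-lip}), $k^2$ from the kernel, and an extra $\sqrt{k}$ from $\|\cdot\|_1$ or Frobenius conversions.

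The U-statistic average over $n(n-1)$ pairs does not increase the sup bound. Finally, the condition $M^{1/d}\gtrsim nk^{L+7/2}h^{-2d-1}$ is exactly designed so that $\mathcal{L}M^{-1/d}\lesssim 1/n\le \sqrt{k}/(nh^d)$, which closes the argument.

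\textbf{Main obstacle.} The hardest step is the Lipschitz control of $\hat\Omega_x^{-1}$ and the careful bookkeeping of powers of $k$ and $h$. I need the final constant to match condition~3 without losing extra $h^{-d}$ factors from the kernel normalization. I will first try a crude bound using operator norms and $\sup\|b_k\|\lesssim\sqrt k$, only refining with the support condition (\ref{eq::b-support}) if the powers fail to match.
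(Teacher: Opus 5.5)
Your plan is essentially the paper's proof. Both arguments give a continuity bound on the kernel $\xi^x$ of $\hat S_x$ that is uniform over the sample, assembled from the Lipschitz properties of $\rho$, $K_{hk}^x$ and $b_k$ together with the resolvent identity for $Q_x^{-1}$ and $\hat\Omega_x^{-1}$; this yields a constant of order $k^{L+4}h^{-(3d+1)}$, which condition~3 turns into $\sqrt{k}/(nh^d)$, so the factor $n$ in your stated $\mathcal{L}$ is a slip.

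When you write it out, note that (\ref{eq::b-lip}) only applies on $\mathcal{C}_h(x)\cap\mathcal{C}_h(x')$. Points lying in only one of the two cubes must be handled as the paper does: one kernel vanishes there, so the term is bounded by $|K_{hk}^x-K_{hk}^{x'}|$ times sup norms. Also, your $h^d$ cancellation for $\hat\Omega_x$ would need $d\hat F/dF$ bounded, which the proposition does not assume; the paper's cruder bound $\int d\hat F\le 1$ is enough.
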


When the tuning parameters $h$ and $k$ are chosen to be polynomial in $n$ with fixed exponents, one can select $M$ such that $M^{1/d} \gtrsim  \frac{n k^{L+7/2}}{h^{2d+1}}$, while ensuring that $M$ remains bounded by a polynomial in $n$ of fixed degree. Consequently, the conditions of both Proposition~\ref{prop::l_inf_1} and Proposition~\ref{prop::l_inf_2} are satisfied. Combining (\ref{eq::diff-tau-h}) and (\ref{eq::decom-l_infty}) with the bias bound from Proposition~\ref{prop::bias-refined}, the fluctuation bound from Proposition~\ref{prop::l_inf_1}, and the local oscillation bound from Proposition~\ref{prop::l_inf_2}, we obtain
\[
\mathbb{E}_{P^n} \|\hat{\tau}-\tau\|_{L_\infty(\mathcal{X})}^2
\lesssim h^{2\gamma} + (h/k^{1/d})^{4s} + \frac{k \log n}{n^2 h^{2d}} + \frac{\log n}{nh^d} .
\]
In the low-smoothness regime, when $s < \frac{d/4}{1+d/(2\gamma)} (1-\frac{\log\log n}{\log n})$, balancing $h^{2\gamma}$, $(h/k^{1/d})^{4s}$, and $k\log n/(n^2h^{2d})$ by choosing
\begin{equation} \label{eq::rate}
h \asymp
n^{-\frac{1/\gamma}{1+d/(2\gamma)+d/(4s)}}
(\log n)^{\frac{1/(2\gamma)}
{1+d/(2\gamma)+d/(4s)}},
\quad
(h/k^{1/d})^{4s} \asymp h^{2\gamma}
\end{equation}
yields
\[
\mathbb{E}_{P^n}
\|\hat{\tau}-\tau\|_{L_\infty(\mathcal{X})}^2
\lesssim
n^{-\frac{2}{1+d/(2\gamma)+d/(4s)}}
(\log n)^{\frac{1}{1+d/(2\gamma)+d/(4s)}}.
\]
Under these choices, the condition defining the low-smoothness regime is equivalent to $k\gtrsim nh^d$. Consequently, the first-order fluctuation term $\log n/(nh^d)$ is dominated by the second-order fluctuation term $k\log n/(n^2h^{2d})$. The two non-stochastic terms $h^{2\gamma}$ and $(h/k^{1/d})^{4s}$ and the second-order fluctuation term are balanced at the same order and jointly determine the rate, with the logarithmic factor entering through the second-order fluctuation.

In the high-smoothness regime, when $s \geq \frac{d/4}{1+d/(2\gamma)} (1-\frac{\log\log n}{\log n})$, taking
\[
h \asymp
\left(\frac{\log n}{n}\right)^{\frac{1}{2\gamma+d}},
\quad
k \asymp nh^d
\]
yields
\[
\mathbb{E}_{P^n}
\|\hat{\tau}-\tau\|_{L_\infty(\mathcal{X})}^2
\lesssim
\left(\frac{\log n}{n}\right)^{\frac{2}{2+d/\gamma}}.
\]
In this regime, the first- and second-order stochastic terms have the same order, while the nuisance-induced term $(h/k^{1/d})^{4s}$ is no larger than $h^{2\gamma}$ and therefore does not determine the rate. The final rate is obtained by balancing the common stochastic order $\log n/(nh^d)$ with $h^{2\gamma}$, as in uniform nonparametric estimation. The conditions on $n$, $h$, and $k$ required by Proposition~\ref{prop::l_inf_1} are automatically satisfied in both regimes. The resulting upper bounds are summarized in the next subsection.

\subsection{Main results and discussion} \label{sec::upper-4}
The following theorem summarizes the upper bounds for the proposed estimator under the $L_2(\mathcal{X})$ and $L_\infty(\mathcal{X})$ risks.

\begin{theorem} \label{thm::upper}
Suppose the following regularity conditions hold:
    \begin{itemize}
        \item[1.] For any $x \in \mathcal{X}$, the eigenvalues of $Q_x$ and $\Omega_x$ are bounded above and away from zero, and the covariate density is bounded above and satisfies $\int \mathbbm{1}\{X \in \mathcal{C}_h(x)\} dF(X) \lesssim h^d$.
        \item[2.] $\hat{\pi}-\pi$ is $\alpha$-smooth and $\hat{\mu}_0-\mu_0$ is $\beta$-smooth.
        \item[3.] $y^2$, $\hat{\pi}^2$, $\hat{\mu}_0^2$, $\| \hat{Q}_x^{-1}-Q_x^{-1} \|_{L_\infty(\mathcal{X})}$, $\|d\hat{F}/dF\|_{L_\infty(\mathcal{X})}$, and $\|d{F}/d\hat{F}\|_{L_\infty(\mathcal{X})}$ are bounded above.
    \end{itemize}
In addition, assume
    \begin{itemize}
        \item[1.] $\tau(x)$ is $\gamma$-smooth,
$\mu_0(x)$ is $\beta$-smooth, $\pi(x)$ is $\alpha$-smooth, and $\varepsilon \leq \pi(x) \leq 1-\varepsilon$ for some constant $\varepsilon >0$.
        \item[2.]
The covariate distribution $F$ is either known, in which case we take $\hat F=F$, or estimated sufficiently accurately such that uniformly over $x\in\mathcal X$,
        \[
        \left\|
        \frac{d\hat F^{*,x}}{dF^{*,x}}-1
        \right\|_{L_\infty(F^{*,x})}
        \lesssim
        \frac{
        (h/k^{1/d})^{2s}+r_n
        }{
        \|\hat\pi-\pi\|_{K\cdot F^{*,x}}
        \left(
        \|\hat\mu_0-\mu_0\|_{K\cdot F^{*,x}}+h^\gamma
        \right)
        }.
        \]
for some nonnegative deterministic sequence $r_n=o(h^\gamma)$, with $h$ and $k$ chosen as in Sections~\ref{sec::upper-2} and~\ref{sec::upper-3} for the respective risks.
        \end{itemize}
The localized basis and kernel satisfy the conditions of Section~\ref{sec::upper-1}, or their support-adapted counterparts verified in Appendix~\ref{subsec:density_and_construction}. The estimator defined in (\ref{eq::estimator}) then satisfies
        \[
        \mathbb{E}_{P^n}\|\hat{\tau} - \tau\|_{L_2(\mathcal{X})}^2
         \lesssim
         \begin{cases}
             n^{-\frac{2}{d/2\gamma+d/4s+1}} & s < \frac{d/4}{1+d/2\gamma}\\
             n^{-\frac{2}{2+d/\gamma}}& s \geq \frac{d/4}{1+d/2\gamma}\\
         \end{cases},
        \]
and
          \[
       \mathbb{E}_{P^n}\|\hat{\tau} - \tau\|_{L_\infty(\mathcal{X})}^2
         \lesssim
         \begin{cases}
             {n}^{-\frac{2}{d/2\gamma+d/4s+1}}
             ({\log n})^{\frac{1}{d/2\gamma+d/4s+1}}
             &
             s < \frac{d/4}{1+d/2\gamma}(1-\frac{\log \log n}{\log n})\\
             (\frac{\log n}{n})^{\frac{2}{2+d/\gamma}} &
             s \geq \frac{d/4}{1+d/2\gamma}(1-\frac{\log \log n}{\log n })\\
         \end{cases}.
        \]
\end{theorem}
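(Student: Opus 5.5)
The proof assembles the propositions of Sections~\ref{sec::upper-1}--\ref{sec::upper-3}. The first step is to check that the stated regularity conditions supply every hypothesis those propositions need, uniformly in $x\in\mathcal X$.
\begin{itemize}
\item[(a)] The eigenvalue bounds on $Q_x$ and $\Omega_x$, the density bound, and the small-ball condition give the hypotheses of Proposition~\ref{lemma::basis} and of the approximation proposition, so \eqref{eq::diff-tau-h} holds.
\item[(b)] The assumed boundedness of $\|\hat Q_x^{-1}-Q_x^{-1}\|_{L_\infty(\mathcal X)}$, together with $\lambda_{\min}(Q_x)\gtrsim 1$, makes the prefactor in \eqref{eq::decompose-tau} uniformly bounded. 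Proposition~\ref{prop::decomposition} then applies.
\item[(c)] The smoothness of $\hat\pi-\pi$ and $\hat\mu_0-\mu_0$, with the density-ratio bounds and the assumed accuracy of $\hat F$ (trivially satisfied when $\hat F=F$), give Propositions~\ref{prop::bias}--\ref{prop::bias-refined}. Thus the squared bias is $\lesssim (h/k^{1/d})^{4s}+h^{2\gamma}$ uniformly.
\item[(d)] By Lemma~S2 of \cite{kennedy2024minimax}, $\sup_x\|\hat\Omega_x^{-1}-\Omega_x^{-1}\|\lesssim 1$. Proposition~\ref{prop::var} then gives variance $\lesssim (nh^d)^{-1}(1+k/(nh^d))$.
\end{itemize}

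For the $L_2(\mathcal X)$ risk, use $\|\hat\tau-\tau\|^2\le 2\|\hat\tau-\tau_h\|^2+2\|\tau_h-\tau\|^2$, together with \eqref{eq::decom-l2} and (a)--(d). This gives
\[
\mathbb{E}\|\hat\tau-\tau\|_{L_2(\mathcal X)}^2\lesssim h^{2\gamma}+(h/k^{1/d})^{4s}+(nh^d)^{-1}\{1+k/(nh^d)\}.
\]
The two regimes then follow by substituting the choices of $h,k$ from Section~\ref{sec::upper-2}.
\begin{itemize}
\item[(i)] In the low-smoothness case, check that $h^{2\gamma}\asymp(h/k^{1/d})^{4s}\asymp k/(n^2h^{2d})$ and that $k\gtrsim nh^d$ exactly when $s<\frac{d/4}{1+d/2\gamma}$. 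Then $1/(nh^d)$ is dominated by $k/(n^2h^{2d})$, and the exponents combine to $-2/(d/2\gamma+d/4s+1)$.
\item[(ii)] In the high-smoothness case, take $k\asymp nh^d$ and $h\asymp n^{-1/(2\gamma+d)}$. One must verify $(h/k^{1/d})^{4s}\lesssim h^{2\gamma}$, which is the reverse inequality $s\ge\frac{d/4}{1+d/2\gamma}$.
\end{itemize}

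For the $L_\infty(\mathcal X)$ risk, use \eqref{eq::decom-l_infty}. The three terms are bounded by Proposition~\ref{prop::l_inf_1}, Proposition~\ref{prop::l_inf_2} and Proposition~\ref{prop::bias-refined}, and \eqref{eq::diff-tau-h} adds $h^{2\gamma}$. The main bookkeeping obstacle is checking the side conditions of Propositions~\ref{prop::l_inf_1} and \ref{prop::l_inf_2} under the chosen tuning parameters.
\begin{itemize}
\item[(i)] Because $h$ and $k$ are powers of $n$ up to logarithmic factors, pick $M^{1/d}\asymp nk^{L+7/2}h^{-2d-1}$. This $M$ is polynomial in $n$ of fixed degree.
\item[(ii)] Verify $nh^d\gtrsim(\log n)^2$, $k\gtrsim\log n$ and $k(\log n)^3\lesssim(nh^d)^2$. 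Each is a polynomial-exponent comparison that holds for large $n$, since $nh^d$ grows polynomially and $k\lesssim$ a power strictly below $(nh^d)^2$. One should confirm that $k/(nh^d)^2\to0$ polynomially in the low regime, using $s>0$.
\item[(iii)] The oscillation term $k/(n^2h^{2d})$ is dominated by $k\log n/(n^2h^{2d})$.
\end{itemize}
This gives
\[
\mathbb{E}\|\hat\tau-\tau\|_{L_\infty(\mathcal X)}^2\lesssim h^{2\gamma}+(h/k^{1/d})^{4s}+k\log n/(n^2h^{2d})+\log n/(nh^d).
\]

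The final step is to plug in \eqref{eq::rate}. Solving $h^{2\gamma}=k\log n/(n^2h^{2d})$ with $k=h^{d}h^{-d\gamma/(2s)}$ yields the stated rate. The regime boundary comes from checking when $k\gtrsim nh^d$; taking logarithms, the extra $(\log n)$ power in $h$ produces the factor $1-\log\log n/\log n$ in the threshold. In the complementary regime, the choice $k\asymp nh^d$, $h\asymp(\log n/n)^{1/(2\gamma+d)}$ balances $\log n/(nh^d)$ with $h^{2\gamma}$. The same logarithmic comparison gives $(h/k^{1/d})^{4s}\lesssim h^{2\gamma}$, which completes both cases.
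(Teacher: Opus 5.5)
Your proposal is correct and follows the paper's own argument. You assemble the approximation, decomposition, bias, variance, fluctuation and oscillation propositions as in Sections~\ref{sec::upper-2} and~\ref{sec::upper-3}, choose $M$ polynomial in $n$ with $M^{1/d}\gtrsim nk^{L+7/2}/h^{2d+1}$, and substitute the stated tuning choices; your exponent and threshold calculations, including the $1-\log\log n/\log n$ adjustment obtained from the boundary condition $k\asymp nh^d$, check out.
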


Under the $L_2(\mathcal X)$ risk, integration over the covariate domain preserves the same bias--variance balance and transition threshold as in pointwise estimation. Under the $L_\infty(\mathcal X)$ risk, uniform control over the covariate domain introduces an additional $\log n$ factor into each stochastic term. In the low-smoothness regime, $k\gtrsim nh^d$, so the second-order fluctuation dominates the first-order fluctuation. The localization term $h^{2\gamma}$, the nuisance-induced term $(h/k^{1/d})^{4s}$, and the second-order fluctuation are then balanced at the same order, producing the nuisance-dependent logarithmic exponent in the low-smoothness rate. In the high-smoothness regime, $k\asymp nh^d$, so the first- and second-order stochastic terms have the same order, while the nuisance-induced term is no larger than the other leading terms and does not affect the resulting rate. Balancing the common stochastic order $\log n/(nh^d)$ with the localization term $h^{2\gamma}$ recovers the usual uniform nonparametric rate. The boundary between the two regimes occurs when $k\asymp nh^d$. Substituting the selected choices of $h$ and $k$ into this condition gives the adjusted transition threshold under the $L_\infty(\mathcal X)$ risk, with the factor $1-\log\log n/\log n$ arising from the additional $\log n$ factor in the stochastic bounds required for uniform control.

We next discuss how the main regularity conditions in Theorem~\ref{thm::upper} can be verified and interpreted. As discussed in Remark~10 of \cite{kennedy2024minimax}, the smoothness conditions on $\hat\pi-\pi$ and $\hat\mu_0-\mu_0$ hold when $\hat\pi$ and $\hat\mu_0$ have the same H\"older smoothness as $\pi$ and $\mu_0$, respectively. Such nuisance estimators can be obtained, for example, using suitably tuned local polynomial, spline, or wavelet methods. We next clarify the uniform inverse-stability condition imposed on $\hat Q_x$ in Theorem~\ref{thm::upper}. The theorem assumes $\sup_{x\in\mathcal X} \|\hat Q_x^{-1}-Q_x^{-1}\|\lesssim1$, which is a uniform analogue of the pointwise condition in \cite{kennedy2024minimax}. Strictly speaking, $\hat Q_x$ is random and may be singular, for example, when all treatment indicators in the estimation sample are zero. Let $\epsilon>0$ be a fixed numerical threshold, and suppose that $\inf_{x\in\mathcal X}\lambda_{\min}(Q_x)\geq2\epsilon$. Under the remaining conditions and suitable fixed tuning constants, Appendix~\ref{app::Q-stability} establishes
\begin{equation}\label{eq::Q-stability-tail}
\Pr\!\left\{
\inf_{x\in\mathcal X}\sigma_{\min}(\hat Q_x)<\epsilon
\right\}
\lesssim
\exp\!\left(-\frac{nh^d}{C_\epsilon\sqrt{k}}\right)
\end{equation}
for sufficiently large $n$. We use the smallest singular value here because $\hat Q_x$ need not be symmetric. On the complementary event,
\[
\begin{aligned}
\sup_{x\in\mathcal X}
\|\hat Q_x^{-1}-Q_x^{-1}\|
&\leq
\sup_{x\in\mathcal X}\|\hat Q_x^{-1}\|
+\sup_{x\in\mathcal X}\|Q_x^{-1}\|\\
&\leq
\frac{1}{\epsilon}+\frac{1}{2\epsilon}
=
\frac{3}{2\epsilon}.
\end{aligned}
\]
Accordingly, we define the estimator to be identically zero whenever $\inf_{x\in\mathcal X}\sigma_{\min}(\hat Q_x)<\epsilon$. In numerical implementations, $\epsilon$ may be chosen as a fixed stability tolerance above machine precision. Since $\tau$ is uniformly bounded, this event contributes at most a constant multiple of the probability bound in \eqref{eq::Q-stability-tail} to either squared risk. Under the stated tuning choices, this contribution decays faster than every polynomial rate and therefore leaves the rates in Theorem~\ref{thm::upper} unchanged.

Proposition~\ref{prop:eigen} provides sufficient conditions for the uniform eigenvalue bounds on $Q_x$ and $\Omega_x$, emphasizing the joint roles of the covariate distribution and the second-order basis. The corresponding bounds for $\hat\Omega_x$ follow from the density-ratio bounds. Subsection~\ref{subsec:density_and_construction} verifies these conditions and the second-order basis requirements for a family of covariate-density and support classes. In particular, the construction applies when the support is $[0,1]^d$, or more generally a single axis-aligned cube contained in $[0,1]^d$, and the density is bounded above and away from zero on its support. It also covers certain disconnected unions of cubes, including the local supports used in the lower-bound construction. These constructions also illustrate how the second-order basis can be adapted to the geometry of the covariate support.

When $F$ is known, we take $\hat F=F$, in which case the density-ratio and density-accuracy conditions hold automatically. Consequently, for a density bounded above and away from zero on $[0,1]^d$, or on a single axis-aligned cube, no smoothness condition on $f$ is required. When $F$ is unknown and must be estimated, additional regularity of the density gives a simple sufficient condition. For illustration, suppose that $f$ is supported on $[0,1]^d$, is $\zeta$-H\"older smooth, and is bounded above and away from zero. Suppose that the estimators satisfy
\begin{align*}
\|\hat f-f\|_{L_\infty([0,1]^d)}
\lesssim
\left(\frac{\log n}{n}\right)^{\zeta/(2\zeta+d)}, & \quad
\|\hat\pi-\pi\|_{L_\infty([0,1]^d)} \lesssim
\left(\frac{\log n}{n}\right)^{\alpha/(2\alpha+d)},\\
\|\hat\mu_0-\mu_0\|_{L_\infty([0,1]^d)}
&\lesssim
\left(\frac{\log n}{n}\right)^{\beta/(2\beta+d)},
\end{align*}
as can be achieved by standard nonparametric estimators under the stated smoothness conditions. Then, uniformly over $x\in\mathcal X$,
\begin{align*}
&\left\|
\frac{d\hat F^{*,x}}{dF^{*,x}}-1
\right\|_{L_\infty(F^{*,x})}
\|\hat\pi-\pi\|_{K\cdot F^{*,x}}
\bigl(
\|\hat\mu_0-\mu_0\|_{K\cdot F^{*,x}}
+h^\gamma
\bigr)\\
&\quad\lesssim
\left(\frac{\log n}{n}\right)^{
\zeta/(2\zeta+d)+\alpha/(2\alpha+d)}
\left\{
\left(\frac{\log n}{n}\right)^{\beta/(2\beta+d)}
+h^\gamma
\right\}.
\end{align*}
As in Remark~11 of \cite{kennedy2024minimax}, a sufficient smoothness condition for both losses is
\[
\frac{\zeta}{2\zeta+d}
>
M_{\alpha,\beta,\gamma,d},
\]
where $M_{\alpha,\beta,\gamma,d} = \frac{1}{ 1+d/(2\gamma)+ \{d/(4s)\vee(1+d/(2\gamma))\} } -\frac{1}{2+d/\alpha} -\frac{1}{2+d/\beta}$. Under the stated error bounds and tuning choices, the density-accuracy condition holds with
\[
r_n=
\left(\frac{\log n}{n}\right)^{
\zeta/(2\zeta+d)+\alpha/(2\alpha+d)}
\left\{
\left(\frac{\log n}{n}\right)^{\beta/(2\beta+d)}
+h^\gamma
\right\}
=o(h^\gamma).
\]

Thus, when $F$ is known, the upper bounds cover standard density classes on regular cube supports without requiring density smoothness. When $F$ must be estimated, additional assumptions, such as sufficient smoothness of its density, are needed to verify the density-accuracy condition in Theorem~\ref{thm::upper}. The problem may become more delicate when the support is unknown, particularly if it is irregular or disconnected. Extending the analysis to such settings remains an important direction for future research.

\section{Conclusion}
In this work, we established matching global minimax lower and upper bounds for CATE estimation under the $L_2(\mathcal X)$ and $L_\infty(\mathcal X)$ risks over the stated covariate-density and support classes, subject to the regularity conditions required for the upper bounds. For the lower bounds, we developed a many-fuzzy-hypotheses method that replaces each fixed alternative in a multiple-testing construction with a mixture over nuisance perturbations while keeping its CATE unchanged. A localized product-mixture $\chi^2$ bound controls the divergence of each resulting mixture from a common baseline, allowing a common framework to establish the lower bounds under both global losses. This strategy may also be useful for other nonparametric and semiparametric minimax problems involving nuisance functions. For the upper bounds, we modified the local polynomial R-learner of \cite{kennedy2024minimax} by replacing the uniform kernel with a continuous kernel and imposing additional regularity conditions on the second-order basis. Under the stated conditions, the resulting upper bounds match the lower bounds.

Our results show that the elbow phenomenon is not merely an artifact of pointwise estimation, but persists when the entire CATE function is estimated under global losses, thereby extending the regression--functional hybrid interpretation of \cite{kennedy2024minimax} to the global setting. In the low-smoothness regime, the rate depends on the smoothness of the nuisance functions, whereas in the high-smoothness regime it reduces to the usual rate for estimating a smooth regression function. Under the $L_2(\mathcal X)$ risk, averaging over the covariate domain leaves the pointwise transition threshold unchanged. Under the $L_\infty(\mathcal X)$ risk, controlling the error simultaneously over the covariate domain introduces additional logarithmic factors and shifts the threshold by the factor $1-\log\log n/\log n$. A similar distinction may also arise in other nonparametric problems where estimating the target function requires estimating nuisance functions.

The sufficient conditions for the upper bounds also illustrate how the covariate distribution enters the analysis through both the support geometry underlying the basis construction and the accuracy required when the distribution is estimated. Extending the analysis to broader covariate-density classes, particularly when the support is unknown, irregular, or disconnected, remains an important direction for future work.

\newpage
\appendix
\section{Proofs for Lower bound}
\subsection{Proof for Theorem~\ref{lemma::chi}} \label{app::chi}
\begin{proof}
First, the $\chi^2$-divergence admits a partition decomposition analogous to the decomposition established in Lemma 5.1 of \cite{robins2009semiparametric}. Consequently, the same argument yields
\begin{equation} \label{eq::chi-decompose}
\chi^2(
\int Q_\lambda^n d \pi (\lambda),
\int P_\lambda^n d \pi (\lambda)
) + 1 =
\mathbb{E}( \prod_{ {j\in S}}  (\chi_j^2(N_j)+1) )
\end{equation}
where $(N_1, \cdots, N_k)$ follows a multinomial distribution with parameters $n$ and  $(p_1,\cdots, p_k)$, and for $j \in S$,
\[
\chi_j^2(N_j) =
\begin{cases}
\chi^2(
\int Q_{j,\lambda_j}^{N_j} d \pi_j(\lambda_j),
\int P_{j,\lambda_j}^{N_j} d \pi_j(\lambda_j)
) &  N_j \geq 1 \\
0 & N_j = 0
\end{cases},
\]
and $\chi_j^2(N_j)=0$ for $j\notin S$.

Next, the bound of Lemma 5.2 in \cite{robins2009semiparametric} continues to hold in the present setting. Indeed, the quantity appearing in their equation (5.2) can be rewritten as
\begin{align*}
\mathbb{E}_{\bar{P}_n}\left(
\frac{
\left( \int [\prod_{i=1}^n q_\lambda(X_i) - \prod_{i=1}^n p_\lambda(X_i)] d\pi(\lambda) \right)^2}
{ \left( \int \prod_{i=1}^n p_\lambda(X_i) d\pi(\lambda) \right)^2}
\right)
= &
\mathbb{E}_{\bar{P}_n}\left(
\left(
\frac{\int \prod_{i=1}^n q_\lambda(X_i) d\pi(\lambda)}
{\int \prod_{i=1}^n p_\lambda(X_i) d\pi(\lambda)}
\right)^2
\right) - 1 \\
= &
\chi^2(
\int Q_\lambda^n d \pi (\lambda),
\int P_\lambda^n d \pi (\lambda)
)
\end{align*}
where $\bar{P}_n = \int P_\lambda^n  d\pi(\lambda)$. Therefore, for $j\in S$,
\[
\chi^2_j(N_j) \leq
2 \left(
\sum_{r=2}^{N_j}  \binom{N_j}{r} \delta_2^r + 2N_j^2 \sum_{r=1}^{N_j-1} \binom{N_j-1}{r}\delta_1^r \delta_2 + 2N_j^2 \varepsilon_1^{N_j-1}\delta_3
\right).
\]
Substituting this bound into (\ref{eq::chi-decompose}) and expanding the resulting product yields
\begin{align}
\nonumber
\chi^2(
\int Q_\lambda^n d \pi (\lambda),
\int P_\lambda^n d \pi (\lambda)
)
\leq &
\mathbb{E}
\left(
\prod_{{j\in S}}  \left(
2 \sum_{r=2}^{N_j}  \binom{N_j}{r} \delta_2^r + 4N_j^2 \sum_{r=1}^{N_j-1} \binom{N_j-1}{r}\delta_1^r \delta_2 +
4N_j^2 \varepsilon_1^{N_j-1}\delta_3 + 1\right) -1
\right) \\
\label{eq::bracket}
= &
\sum_{ {\emptyset\ne J\subseteq S} }
\mathbb{E}
\left(
\prod_{j \in J} \left(
2 \sum_{r=2}^{N_j}  \binom{N_j}{r} \delta_2^r + 4N_j^2 \sum_{r=1}^{N_j-1} \binom{N_j-1}{r}\delta_1^r \delta_2 +
4N_j^2 \varepsilon_1^{N_j-1}\delta_3
\right)
\right)
\end{align}
{where we used $\prod_{j\in S}(x_j+1)-1 = \sum_{\emptyset\ne J\subseteq S} \prod_{j \in J} x_j$ together with Tonelli's theorem.}

Now consider
\[
(N_1, \cdots, N_k) \sim \operatorname{Multinomial}(n; p_1, \cdots, p_k).
\]
By Property P2 of \cite{joag1983negative}, multinomial counts are negatively associated. Therefore, for any collection of nonnegative nondecreasing functions $f_1, \cdots, f_k$,
\begin{equation} \label{eq::NA}
\mathbb{E}\left( \prod_{j \in J} f_j(N_j) \right) \leq \prod_{j \in J}
\mathbb{E}\left( f_j(N_j) \right).
\end{equation}
Without loss of generality, assume $\varepsilon_1 \geq 1$, since otherwise $\varepsilon_1$ can be replaced by $1$. {For each $j\in S$, the quantity inside the product in (\ref{eq::bracket}) is a nonnegative nondecreasing function of $N_j$.} Applying (\ref{eq::NA}) therefore yields
\begin{align*}
\chi^2 (
\int Q_\lambda^n d \pi (\lambda),
\int P_\lambda^n d \pi (\lambda)
)
\leq & \sum_{{\emptyset\ne J\subseteq S}}  \prod_{j \in J}
\mathbb{E} \left(
2 \sum_{r=2}^{N_j}  \binom{N_j}{r} \delta_2^r + 4N_j^2 \sum_{r=1}^{N_j-1} \binom{N_j-1}{r}\delta_1^r \delta_2 +
4N_j^2 \varepsilon_1^{N_j-1}\delta_3
\right) \\
= &   \prod_{{j\in S}} \mathbb{E} \left(
2 \sum_{r=2}^{N_j}  \binom{N_j}{r} \delta_2^r + 4N_j^2 \sum_{r=1}^{N_j-1} \binom{N_j-1}{r}\delta_1^r \delta_2 +
4N_j^2 \varepsilon_1^{N_j-1}\delta_3
+ 1
\right) - 1  \\
\leq & \exp\left(\sum_{{j\in S}}\mathbb{E} \left(
2 \sum_{r=2}^{N_j}  \binom{N_j}{r} \delta_2^r + 4N_j^2 \sum_{r=1}^{N_j-1} \binom{N_j-1}{r}\delta_1^r \delta_2 +
4N_j^2 \varepsilon_1^{N_j-1}\delta_3
\right) \right) -1
\end{align*}
{where the equality follows from $\sum_{\emptyset\ne J\subseteq S} \prod_{j \in J} x_j= \prod_{j\in S}(x_j+1) -1$ again, and the final step uses $\prod_{j\in S} (1+x_j) \leq \exp(\sum_{j\in S} x_j)$ for $x_j \geq 0$.}

{Finally, under the assumptions $\bar{p}/p_\lambda \leq \varepsilon_1$ and $\max_{j\in S} np_j(1 \vee \delta_1 \vee \delta_2) \leq \varepsilon_2$, the same calculations used in \cite{robins2009semiparametric} imply that}
\begin{align*}
\sum_{{j\in S}}\mathbb{E}\left[
2 \sum_{r=2}^{N_j}  \binom{N_j}{r} \delta_2^r + 4N_j^2  \sum_{r=1}^{N_j-1} \binom{N_j-1}{r}\delta_1^r \delta_2 +
4N_j^2 \varepsilon_1^{N_j-1}\delta_3
\right] \\
\leq
C n \sum_{j \in S} p_j \left\{n \max_{j \in S} p_j (\delta_2^2+\delta_1 \delta_2) + \delta_3\right\}
\end{align*}
where $C$ depends only on $\varepsilon_1$ and $\varepsilon_2$. Substituting the bound then gives
\[
\chi^2 (
\int Q_\lambda^n d \pi (\lambda),
\int P_\lambda^n d \pi (\lambda)
)  \leq
\exp \left(
C n \sum_{j \in S} p_j \left\{n \max_{j \in S} p_j (\delta_2^2+\delta_1 \delta_2) + \delta_3\right\}
\right)
-1.
\]
\end{proof}

\subsection{Proof for Theorem~\ref{lemma::lower-bound}} \label{app::lower-bound}

\begin{proof}
For each $i$, write $\overline P_i=\int (P^{(i)})_\lambda^n\,d\pi(\lambda)$. Given an estimator $\widehat{\mathbb S}$, define the induced test $\psi$ to be an index minimizing $\operatorname{dist}(\widehat{\mathbb S},\mathbb S_{(i)})$. Because the targets are separated by at least $2r$, the event $\{\psi\ne i\}$ is contained in $\{\operatorname{dist}(\widehat{\mathbb S},\mathbb S_{(i)})\ge r\}$. Since $(P^{{(i)}})_\lambda\in\mathcal P$ for all $ i = 0, \cdots, M$ and $\lambda \in \Lambda$,
    \begin{align*}
    \inf_{\hat{\mathbb{S}}}
    \sup_{P\in\mathcal P}
    \mathbb E_{P^{ n}}
    \!\left[
    \operatorname{dist}(\hat{\mathbb{S}},\mathbb{S}(P))
    \right]
    \;\ge\;&
    \inf_{\hat{\mathbb{S}}}
    \sup_{\substack{0\le i\le M\\ \lambda\in \Lambda}}
    \mathbb E_{(P^{{(i)}})_\lambda^{ n}}
    \!\left[
    \operatorname{dist}(\hat{\mathbb{S}},\mathbb{S}_{{(i)}})
    \right]
    \\[0.3em]
    \;\ge\;&
    \inf_{\hat{\mathbb{S}}}
    \sup_{0\le i\le M}
    \int
    \mathbb E_{(P^{{(i)}})_\lambda^{ n}}
    \!\left[
    \operatorname{dist}(\hat{\mathbb{S}},\mathbb{S}_{{(i)}})
    \right]
    \,d\pi(\lambda)\\
    \geq &  r \cdot \inf_{\hat{\mathbb{S}}} \sup_{\substack{0 \leq i \leq M}} \int
    {(P^{{(i)}})_\lambda^n} \left( \operatorname{dist}(\hat{\mathbb{S}}, \mathbb{S}_{{(i)}}) \geq r \right) d\pi(\lambda)  \\
    \geq & r \cdot  \inf_{\psi}\sup_{0\le i\le M}\overline P_i(\psi\ne i)\\
    \geq & r \cdot  \frac{1}{2} (1-c-\frac{1}{M}),
    \end{align*}
where the third line uses the inequality $u\ge r\mathbbm{1}\{u\ge r\}$ for $u\ge0$, the fourth line uses the induced test above, and the final line follows from the $\chi^2$ testing bound in Propositions~2.2 and~2.4 and Theorem~2.6 of \cite{Tsybakov2009intro}.
\end{proof}

\section{Proofs for Upper bound}

\subsection{Proof for Proposition~\ref{lemma::basis}} \label{app::basis}
\begin{proof}
For $x \in \mathcal{X}$, define $s_k^x = \int_{ [0,1]^d} b_k(v)K_k(v) g(v)dF^{*,x}(v)$. We first show $\|s_k^x\| \lesssim 1$. By construction,
    \begin{align*}
    \int_{[0,1]^d} ( g(v) -  b_k(v)^T \Omega_x^{-1}  s_k^x)^2 K_{k}(v) dF^{*,x}(v)
    =
    \int_{[0,1]^d}
    g(v) ^2 K_{k}(v) dF^{*,x}(v)   -(s_k^x)^T\Omega_x^{-1} s_k^x  \geq 0.
\end{align*}
Hence,
\[
(s_k^x)^T \Omega^{-1}_x s_k^x \lesssim 1
\]
by the boundedness of $K_k$, $g$ and $dF^{*,x}$. Since $\lambda_{\max}(\Omega_x) \lesssim 1$ and
 \[
 (s_k^x)^T \Omega^{-1}_x s_k^x \geq
 {\|s_k^x\|^2 }/{\lambda_{\max}(\Omega_x)}
   ,
 \]
it follows that $\sup_{x \in \mathcal{X}} \|s_k^x\| \lesssim 1$. Therefore,
 \begin{align*}
    \sup_{x \in \mathcal{X}}  \|\hat{\Pi}_x g\|_{K \cdot F^{*,x}}^2
     & =
     \sup_{x \in \mathcal{X}}\int (b_k(v)^T \hat{\Omega}_x^{-1}s_k^x)^2K_{k}(v) dF^{*,x}(v) \\
     & = \sup_{x \in \mathcal{X}}(s_k^x)^T \hat{\Omega}_x^{-1} \Omega_x
     \hat{\Omega}_x^{-1}
     s_k^x
     \\
     & \leq
     \sup_{x \in \mathcal{X}}\|\hat{\Omega}_x^{-1}\|
     \|{\Omega}_x\|
     \|\hat{\Omega}_x^{-1}\| \|s_k^x\|^2 \lesssim 1.
 \end{align*}
where the boundedness of $\|\hat{\Omega}_x^{-1}\|$ follows from Lemma~\ref{lemma::eigen-omegahat}.

Moreover, condition (\ref{eq::b-support}) implies that for every $j = 1, \cdots, k$,
  \[
  |[s_k^x]_j| = |\int_{[0,1]^d}
  [b_k]_j(v) K_k(v) g(v) dF^{*,x}(v) |
  \lesssim
  k^{-1/2}
  \]
so that $\|s_k^x\|_{\infty} \lesssim k^{-1/2}$. Consequently,
 \[
 \sup_{x \in \mathcal{X}} \sup_{v \in [0,1]^d} | \Pi_xg (v) |
 \leq
 \sup_{x \in \mathcal{X}}
 \sup_{v \in [0,1]^d} \|b_k(v) \|_{1} \|\Omega_x^{-1}\|_{\infty} \|s_k^x\|_{\infty} \lesssim 1,
 \]
where we used condition (\ref{eq::b-sup}) and (\ref{eq::b-row-sum}). The same argument gives
 \[
 \sup_{x \in \mathcal{X}} \sup_{v \in [0,1]^d} | \hat{\Pi}_x g (v) |
 \leq
 \sup_{x \in \mathcal{X}}
 \sup_{v \in [0,1]^d} \|b_k(v) \|_{1} \|\hat{\Omega}_x^{-1}\|_{\infty} \|s_k^x\|_{\infty} \lesssim 1.
 \]
This completes the proof.
\end{proof}

\subsection{Proof for Proposition~\ref{prop::decomposition}} \label{app::decomposition}
\begin{proof}
We begin with the $L_2(\mathcal{X})$ risk. By the uniform  boundedness of $\|Q_x^{-1}\|$ and $\|\hat{Q}_x^{-1} - Q_x^{-1}\|$, (\ref{eq::decompose-tau}) implies
\[
|\hat{\tau}(x) - \tau_h(x)|^2 \lesssim \|\hat{S}_x - S_x\|^2.
\]
Therefore, by Tonelli's theorem,
\begin{align*}
    \mathbb{E}
    \|\hat{\tau}-\tau_h\|^2_{L_2(\mathcal{X})}
    =
    \int_{\mathcal{X}}
    \mathbb{E} \left(
     (\hat{\tau}(x)-\tau_h(x))^2
    \right)
    dx
    \lesssim &
    \int_{\mathcal{X}}
    \mathbb{E}\|\hat{S}_x - S_x\|^2 dx
    \leq
    \sup_{x \in \mathcal{X}}
    \mathbb{E}\|\hat{S}_x - S_x\|^2 .
\end{align*}
Since the dimension $q$ is fixed,
\[
\mathbb{E}\|\hat{S}_x - S_x\|^2  \lesssim \max_{1 \leq j \leq q} \mathbb{E}
    \left(
    [\hat{S}_x - S_x]_j^2
    \right).
\]
Applying the conditional bias-variance decomposition,
\[
\mathbb{E}
    \left(
    [\hat{S}_x - S_x]_j^2
    \right)
    = \mathbb{E}
    \left \{
    \mathbb{E}
    \left(
    [\hat{S}_x - S_x]_j \mid D^n
    \right)^2 +
    \operatorname{Var}([\hat{S}_x]_j \mid D^n )
    \right\}
\]
Hence,
\begin{align*}
     \mathbb{E}
    \|\hat{\tau}-\tau_h\|^2_{L_2(\mathcal{X})}
    \lesssim &
    \sup_{x \in \mathcal{X}} \max_{1 \leq j \leq q}
    \mathbb{E}
    \left \{
    \mathbb{E}
    \left(
    [\hat{S}_x - S_x]_j \mid D^n
    \right)^2 +
    \operatorname{Var}([\hat{S}_x]_j \mid D^n )
    \right\}.
\end{align*}
We next consider the $L_\infty(\mathcal{X})$ risk. Again by (\ref{eq::decompose-tau}),
\[
\mathbb{E}\|\hat{\tau}-\tau_h\|_{L_\infty(\mathcal{X})}^2
\lesssim
\mathbb{E}
\left( \sup_{x \in \mathcal{X}}
\|\hat{S}_x - S_x\|^2
\right).
\]
Decompose
\[
\hat{S}_x - S_x = \hat{S}_x -\mathbb{E}(\hat{S}_x  \mid D^n)  + \mathbb{E}(\hat{S}_x  \mid D^n)  - S_x.
\]
Using $\sup_x\|a_x+b_x\|^2 \leq 2\sup_x\|a_x\|^2 + 2\sup_x\|b_x\|^2$, we obtain
\begin{align*}
    \mathbb{E} \|\hat{\tau}-&\tau_h\|_{L_\infty(\mathcal{X})}^2
    \lesssim
    \mathbb{E} \left(
    \sup_{x \in \mathcal{X}}\|\hat{S}_x -
    \mathbb{E}(\hat{S}_x  \mid D^n) \|^2
    \right)  +
    \mathbb{E}
    \left(
    \sup_{x \in \mathcal{X}}
    \|\mathbb{E}(\hat{S}_x\mid D^n) -S_x\|^2
    \right).\\
    &
    \lesssim
    \mathbb{E}  \left\{
    \mathbb{E} \left( \sup_{x \in \mathcal{X}} \|\hat{S}_x -
    \mathbb{E}(\hat{S}_x  \mid D^n) \|^2
    \mid D^n  \right)
    \right\} +
    \mathbb{E}  \left\{
     \sup_{x \in \mathcal{X}}
     \max_{1 \leq j \leq q}
     \left|
     \mathbb{E} \left( [\hat{S}_x-S_x]_j \mid D^n \right)
     \right|^2
      \right\}
\end{align*}
where the second line follows from the tower property and the fact that $q$ is fixed. To bound the first term, let $x_1,\dots,x_M$ be a maximum packing of $\mathcal{X}$. Then every $x\in \mathcal{X}$ satisfies
\[
\min_{1\le \ell \le M}\|x-x_\ell\|\lesssim  M^{-1/d}.
\]
A standard discretization argument therefore yields
\begin{align*}
    \mathbb{E} \left(
    \sup_{x \in \mathcal{X}}
    \|\hat{S}_x -
    \mathbb{E}(\hat{S}_x  \mid D^n) \|^2  \mid D^n  \right)
    \lesssim &
    \mathbb{E} \left(
    \max_{1\le \ell \le M}
    \|\hat S_{x_\ell}-\mathbb{E}(\hat S_{x_\ell}\mid D^n)\|^2
    \mid D^n
    \right) \\
    & +
    \mathbb{E}\left(
    \sup_{x, x^\prime \in \mathcal{X}: \|x-x'\| \lesssim  M^{-1/d}}
    \|(\hat S_x-\hat S_{x'})-
    \mathbb{E}(\hat S_x-\hat S_{x'}\mid D^n)\|^2
    \mid D^n
    \right)
\end{align*}
Since $q$ is fixed, the first term is further bounded by
\[
    \mathbb{E} \left(
    \max_{1\le \ell \le M}
    \max_{1 \le j \le q}
    \left|[\hat S_{x_\ell}]_j-\mathbb{E}([\hat S_{x_\ell}]_j \mid D^n) \right|^2
    \mid D^n
    \right).
\]
As for the second term, Jensen's inequality implies
\[
\|\mathbb{E}(\hat{S}_x - \hat{S}_{x^\prime} \mid D^n)\|^2  \leq \mathbb{E} \left( \|\hat{S}_x - \hat{S}_{x^\prime} \|^2 \mid D^n
\right).
\]
Hence,
\begin{align*}
    \mathbb{E}\left(
    \sup_{x, x^\prime \in \mathcal{X}: \|x-x'\| \lesssim  M^{-1/d}}
    \|(\hat S_x-\hat S_{x'})-
    \mathbb{E}(\hat S_x-\hat S_{x'}\mid D^n)\|^2
    \mid D^n
    \right)  \\
    \lesssim
    \mathbb{E}\left(
    \sup_{x, x' \in \mathcal{X}: \|x-x'\|\lesssim M^{-1/d}}
    \|\hat S_x-\hat S_{x'}\|^2
    \mid D^n
    \right).
\end{align*}
Combining the preceding bounds, we obtain
    \begin{align*}
    \mathbb{E}&\|\hat{\tau}-\tau_h\|_{L_\infty(\mathcal{X})}^2  \lesssim
    \mathbb{E} \left\{
    \mathbb{E} \left(
    \max_{1\le \ell \le M}
    \max_{1 \le j \le q}
    \left|[\hat S_{x_\ell }]_j-\mathbb{E}([\hat S_{x_\ell} ]_j\mid D^n) \right|^2
    \mid D^n
    \right) \right. \\
    &
    \left.
    \quad +
     \mathbb{E}\left(
    \sup_{x, x' \in \mathcal{X}: \|x-x'\|\lesssim M^{-1/d}}
    \|\hat S_x-\hat S_{x'}\|^2
    \mid D^n
    \right)
    +
    \sup_{x \in \mathcal{X}}
     \max_{1 \leq j \leq q}
     \left|
     \mathbb{E} \left( [\hat{S}_x - S_x]_j \mid D^n \right)
     \right|^2
    \right\}.
    \end{align*}

\end{proof}

\subsection{Proof for Proposition~\ref{prop::l_inf_1}} \label{app::l_inf_1}
\begin{proof}
For any fixed $x \in \mathcal{X}$, the quantity $[\hat{S}_{x}]_j - \mathbb{E}( [\hat{S}_{x}]_j \mid D^n)$ is a centered U-statistic with non-degenerate kernel
\begin{align*}
    [\xi^x]_j(Z_1, Z_2) =& [\rho_h^x(X_1)]_j K_{hk}^x(X_1)
    \left\{
    \hat{\psi}_{y_1}(Z_1) - \hat{\psi}_{a_1}(Z_1) \rho_h^x(X_1)^T \theta_x
    \right\} \\
    & \quad +
    [\rho_h^x(X_1)]_j  K_{hk}^x(X_1)
    \left\{
    \hat{\psi}_{y_2}^x(Z_1, Z_2) - \hat{\psi}_{a_2}^x(Z_1, Z_2) \rho_h^x(X_2)^T \theta_x
    \right\}K_{hk}^x(X_2).
\end{align*}
Conditional on the training sample $D^n$, which is suppressed from the notation throughout the proof, define the first-order projections in the Hoeffding decomposition by
\begin{equation} \label{eq::deg-kernel1}
    [\tilde{\xi}_1^x]_j(Z_1) =  \mathbb{E}_{Z_2}\left( [\xi^x]_j (Z_1, Z_2) \mid Z_1\right)- \mathbb{E}\left( [\xi^x]_j (Z_1, Z_2) \right),
\end{equation}
\begin{equation} \label{eq::deg-kernel2}
    [\tilde{\xi}_2^x]_j(Z_2) =  \mathbb{E}_{Z_1}\left( [\xi^x]_j (Z_1, Z_2) \mid Z_2 \right)- \mathbb{E}\left( [\xi^x]_j (Z_1, Z_2) \right),
\end{equation}
and the completely degenerate second-order kernel
\begin{equation} \label{eq::deg-kernel3}
    [\tilde{\xi}^x]_j(Z_1, Z_2) = [\xi^x]_j(Z_1, Z_2) - \mathbb{E}_{Z_2}\left( [\xi^x]_j (Z_1, Z_2) \mid Z_1\right) -
    \mathbb{E}_{Z_1}\left( [\xi^x]_j (Z_1, Z_2) \mid Z_2 \right) +
    \mathbb{E} \left( [\xi^x]_j (Z_1, Z_2) \right).
\end{equation}
Hence
\[
[{\xi}^x]_j(Z_1, Z_2) - \mathbb{E} \left( [\xi^x]_j (Z_1, Z_2) \right) =
[\tilde{\xi}^x]_j(Z_1, Z_2) +
[\tilde{\xi}_1^x]_j (Z_1) +
[\tilde{\xi}_2^x]_j (Z_2).
\]
Therefore, bounding
\[
 \mathbb{E}[
 \max_{1 \leq \ell \leq M} \max_{1 \leq j \leq q}
 |
 [\hat{S}_{x_\ell}]_j -[ \mathbb{E}( \hat{S}_{x_\ell} \mid D^n) ]_j
 |^2
 \mid D^n]
\]
reduces to controlling
\begin{equation} \label{eq::target-kernel}
    \mathbb{E} \left(
    \max_{1 \leq \ell \leq M}\max_{1 \leq j \leq q} | \mathbb{P}_n([\tilde{\xi}_1^{x_\ell}]_j) |^2
    +
    \max_{1 \leq \ell \leq M}\max_{1 \leq j \leq q} | \mathbb{P}_n([\tilde{\xi}_2^{x_\ell}]_j) |^2 +
    \max_{1 \leq \ell \leq M}\max_{1 \leq j \leq q} | \mathbb{U}_n([\tilde{\xi}^{x_\ell}]_j) |^2 \mid D^n
    \right)
\end{equation}
We next invoke the following concentration inequality for empirical processes and degenerate U-statistics.
\begin{lemma} (Proposition 2.3 in \cite{arcones1993limit}, Theorem 3.4.8 from \cite{gine2021mathematical}) \label{lemma::concentration}
Let $f$ be a centered first-order kernel and $h$ a completely degenerate second-order kernel satisfying
    \[
        \|f\|_{ L_\infty(\mathcal{Z}) } \leq b,
        \quad
        \sigma^2 = \mathbb{E} \left( f(Z)^2 \right), \quad
        A = \|h(z_1, z_2)\|_{L_\infty(\mathcal{Z}\times\mathcal{Z})},
        \quad
        C^2 = \frac{n(n-1)}{2} \mathbb{E}\left(h(Z_1, Z_2)^2\right),
    \]
    \[
        B^2 = (n-1) \max \left\{
        \|\mathbb{E}_{Z_1}\left(h(Z_1, z_2)^2\right)\|_{L_\infty(\mathcal{Z})},
        \|\mathbb{E}_{Z_2}\left(h(z_1, Z_2)^2\right)\|_{L_\infty(\mathcal{Z})}
        \right\},
        \]
    \[
        D = \frac{n}{2} \sup
        \left\{ \mathbb{E}\left(
        h(Z_1, Z_2) g_1(Z_1) g_2(Z_2) \right):
        \mathbb{E}\left(g_1(Z)^2 \right),
        \mathbb{E}\left(g_2(Z)^2 \right) \leq 1
        \right\}.
    \]
Then for every $t >0$,
    \[
    \Pr(|\mathbb{P}_n(f)| > t) \lesssim
    e^{- \frac{nt^2}{2\sigma^2 + (2/3)bt}}.
    \]
    \[
    \Pr( |\mathbb{U}_n(h)| \geq t) \lesssim e^{- c \min \left( \frac{n^4t^2}{C^2}, \frac{n^2t}{D}, \frac{n^{4/3}t^{2/3}}{B^{2/3}}, \frac{nt^{1/2}}{A^{1/2}}\right)}
    \]
for some universal constant $c$.
\end{lemma}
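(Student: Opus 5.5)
The first bound is the classical Bernstein inequality, and the second comes from the exponential inequality for canonical (completely degenerate) $U$-statistics of order two of Gin\'e, Lata{\l}a and Zinn (in the form of Theorem~3.4.8 of \cite{gine2021mathematical}, refining \cite{arcones1993limit}). The plan is therefore to quote these results and check that the constants $b,\sigma^2,A,B,C,D$ in the statement are the correct rescalings for the normalized averages $\mathbb{P}_n$ and $\mathbb{U}_n$.

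\textbf{First bound.} Apply Bernstein's inequality to $\sum_{i=1}^n f(Z_i)$. The summands are i.i.d., centered, bounded by $b$, and have variance $\sigma^2$, so
\[
\Pr\Bigl(\bigl|\textstyle\sum_i f(Z_i)\bigr|>nt\Bigr)\le 2\exp\bigl(-\tfrac{n^2t^2}{2n\sigma^2+(2/3)bnt}\bigr).
\]
Cancelling $n$ gives the stated bound, with the factor $2$ absorbed into $\lesssim$.

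\textbf{Second bound, step 1: symmetrize.} Reduce to a symmetric kernel by replacing $h$ with $\bar h(z_1,z_2)=\{h(z_1,z_2)+h(z_2,z_1)\}/2$. This leaves $\mathbb{U}_n(h)$ unchanged and preserves complete degeneracy. Each of $A,B,C,D$ for $\bar h$ is at most the corresponding quantity for $h$, by the triangle inequality together with the symmetric definition of $B$ and the supremum over $(g_1,g_2)$ in $D$. Then
\[
\mathbb{U}_n(h)=\frac{2}{n(n-1)}\sum_{i<j}\bar h(Z_i,Z_j).
\]
Set $u=n(n-1)t/2\asymp n^2t$ for $n\ge2$.

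\textbf{Step 2: invoke the canonical $U$-statistic inequality.} The inequality states that
\[
\Pr\Bigl(\bigl|\textstyle\sum_{i<j}\bar h(Z_i,Z_j)\bigr|\ge u\Bigr)\le K\exp\Bigl(-\tfrac1K\min\bigl\{\tfrac{u^2}{\tilde C^2},\tfrac{u}{\tilde D},\tfrac{u^{2/3}}{\tilde B^{2/3}},\tfrac{u^{1/2}}{\tilde A^{1/2}}\bigr\}\Bigr),
\]
where the constants are computed from the whole array.
\begin{itemize}
\item[(i)] $\tilde A=\max_{i,j}\|\bar h\|_\infty\le A$.
\item[(ii)] $\tilde C^2=\sum_{i<j}\mathbb{E}\bar h^2\le \tfrac{n(n-1)}{2}\mathbb{E}h^2=C^2$.
\item[(iii)] $\tilde B^2=\max_i\bigl\|\sum_{j\ne i}\mathbb{E}_{Z_j}\bar h(z,Z_j)^2\bigr\|_\infty\le B^2$.
\item[(iv)] $\tilde D=\sup\bigl\{\mathbb{E}\sum_{i<j}\bar h(Z_i,Z_j)f_i(Z_i)g_j(Z_j):\sum_i\mathbb{E}f_i^2\le1,\ \sum_j\mathbb{E}g_j^2\le1\bigr\}$.
\end{itemize}

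\textbf{Step 3: the $D$-term.} For each fixed pair $(i,j)$, bound $\mathbb{E}\,\bar h f_ig_j\le \sup(\cdot)\,\|f_i\|_2\|g_j\|_2$, where $\sup(\cdot)$ is the single-pair supremum in the definition of $D$. By Cauchy--Schwarz,
\[
\sum_{i<j}\|f_i\|_2\|g_j\|_2\le\Bigl(\sum_i\|f_i\|_2\Bigr)\Bigl(\sum_j\|g_j\|_2\Bigr)\le n.
\]
Hence $\tilde D\lesssim D$. The bound is attained up to constants by taking $f_i=g_i=g/\sqrt n$, which accounts for the factor $n/2$ in $D$.

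\textbf{Step 4: substitute $u\asymp n^2t$.} The four terms become
\[
\frac{n^4t^2}{C^2},\qquad \frac{n^2t}{D},\qquad \frac{n^{4/3}t^{2/3}}{B^{2/3}},\qquad \frac{nt^{1/2}}{A^{1/2}}.
\]
Because the minimum is monotone in each constant, the upper bounds from Step 2 can be inserted directly, and all numerical factors are absorbed into $c$ and $\lesssim$. This yields the second display.

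The main obstacle is Step 3, the bookkeeping for the $D$-term: matching the array-wise supremum in the source theorem to the single-pair supremum with the factor $n/2$ used in the statement. The estimate there is elementary but has to be done carefully. The remaining steps are direct substitutions.
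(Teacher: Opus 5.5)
Your proposal is correct and takes the same route as the paper, which gives no argument beyond citing Bernstein's inequality and the canonical $U$-statistic exponential bound of Theorem~3.4.8 in \cite{gine2021mathematical} (with \cite{arcones1993limit}). Your normalization checks (symmetrizing to $\bar h$, setting $u=n(n-1)t/2$, comparing the array constants with $A,B,C$, and showing the array-wise $D$-constant is at most $2D$) are correct and make explicit the rescaling the paper leaves implicit.
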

To apply the result, it remains to derive the quantities appearing in the concentration bounds for the kernels $\tilde{\xi}_1^x$,  $\tilde{\xi}_2^x$ and $\tilde{\xi}^x$. The required quantities are summarized in the following proposition.

\begin{proposition} \label{prop::kernel-bound}
For every $x \in \mathcal{X}$,
    \[
    \|[\tilde{\xi}_1^x]_j \|_{L_\infty(\mathcal{Z})}, \|[\tilde{\xi}_2^x]_j\|_{L_\infty(\mathcal{Z})}
    \lesssim \frac{1}{h^d}, \
    \sigma^2([\tilde{\xi}_1^x]_j),
    \sigma^2([\tilde{\xi}_2^x]_j)
    \lesssim \frac{1}{h^d}, \
    \| [\tilde{\xi}^x]_j \|_{L_\infty(\mathcal{Z})} \lesssim \frac{k}{h^{2d}},  \
    \sigma^2([\tilde{\xi}^x]_j) \lesssim \frac{k}{h^{2d}},
    \]
    \[
    \left \|
    \mathbb{E}_{Z_1}\left( [\tilde{\xi}^x]_j(Z_1, z_2)^2\right)\right  \|_{L_\infty(\mathcal{Z})}
    ,
    \left \|\mathbb{E}_{Z_2}\left( [\tilde{\xi}^x]_j(z_1, Z_2)^2\right)  \right  \|_{L_\infty(\mathcal{Z})}
    \lesssim \frac{k}{h^{3d}},
    \]
and
    \[
    \sup\left\{ \mathbb{E}\left(
        [\tilde{\xi}^x]_j(Z_1, Z_2) g_1(Z_1) g_2(Z_2) \right):
        \mathbb{E}\left(g_1(Z)^2\right),
        \mathbb{E}\left(g_2(Z)^2\right) \leq 1
        \right\} \lesssim \frac{1}{h^d}
    \]
\end{proposition}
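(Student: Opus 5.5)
The plan is to bound the kernel $[\xi^x]_j$ and its Hoeffding pieces directly, term by term. Throughout we use the following facts.
\begin{itemize}
\item[(a)] $|[\rho_h^x]_j|\lesssim 1$ on $\mathcal{C}_h(x)$, $0\le K_{hk}^x\le h^{-d}\mathbbm{1}_{\mathcal{C}_h(x)}$, and $\|\theta_x\|\lesssim 1$. The last holds because $\tau$ is bounded, $Q_x$ has eigenvalues bounded below, and $R_x$ is bounded.
\item[(b)] $Y$, $A$, $\hat\pi$ and $\hat\mu_0$ are bounded.
\item[(c)] $\Pr(X\in\mathcal{C}_h(x))\lesssim h^d$.
\item[(d)] The second-order weight $w_x(X_1,X_2)=b_{hk}^x(X_1)^T\hat\Omega_x^{-1}b_{hk}^x(X_2)$ satisfies the pointwise bound $|w_x|\le \|b_k\|\,\|\hat\Omega_x^{-1}\|\,\|b_k\|\lesssim k$ by (\ref{eq::b-sup}).
\end{itemize}
Since centering at most doubles sup-norms and second moments, it suffices to bound the uncentered pieces $[\xi^x]_j$, $\mathbb{E}_{Z_2}[\xi^x]_j(Z_1,Z_2)$ and $\mathbb{E}_{Z_1}[\xi^x]_j(Z_1,Z_2)$.

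\emph{First-order projections.} Write $[\xi^x]_j=T_1(Z_1)+T_2(Z_1,Z_2)$. The first-order term satisfies $|T_1|\lesssim h^{-d}\mathbbm{1}_{\mathcal{C}_h(x)}(X_1)$, so $\|T_1\|_\infty\lesssim h^{-d}$ and $\mathbb{E}T_1^2\lesssim h^{-2d}h^d=h^{-d}$. For $\mathbb{E}_{Z_2}T_2(Z_1,Z_2)$, integrating out $Z_2$ gives
\[
h^{-d}[\rho_h^x(X_1)]_j K_{hk}^x(X_1)(A_1-\hat\pi(X_1))\,b_{hk}^x(X_1)^T\hat\Omega_x^{-1}\int b_{hk}^x K_{hk}^x\,g\,dF\cdot h^d,
\]
with $g$ bounded. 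After the change of variables this is $h^{-d}\,\hat\Pi_x \tilde g(v_1)$ up to bounded factors, where $\tilde g$ is bounded. By Proposition~\ref{lemma::basis}, $\sup_v|\hat\Pi_x\tilde g(v)|\lesssim 1$, so this projection is $\lesssim h^{-d}\mathbbm{1}_{\mathcal{C}_h(x)}(X_1)$. The same sup and $L_2$ bounds follow. The projection $\mathbb{E}_{Z_1}$ is symmetric, using the $\Pi$ of a bounded function applied at $v_2$. This yields the first two displayed bounds.

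\emph{Degenerate kernel: sup-norm and moments.} From (d), $|T_2|\lesssim k\,h^{-2d}$, which gives $A\lesssim k/h^{2d}$. For the second moments, use $\mathbb{E}_{Z_2}T_2^2\lesssim h^{-4d}\,\mathbb{E}_{X_2}\bigl[w_x(X_1,X_2)^2\mathbbm{1}_{\mathcal{C}_h(x)}(X_2)\bigr]$. After rescaling,
\[
h^d\int\bigl(b_k(v_1)^T\hat\Omega_x^{-1}b_k(v)\bigr)^2\,dF^{*,x}(v)\lesssim h^d\,\|b_k(v_1)\|^2\,\|\hat\Omega_x^{-1}\|^2\,\lambda_{\max}\Bigl(\int b_kb_k^T\Bigr)\lesssim k\,h^d.
\]
Hence $\|\mathbb{E}_{Z_2}T_2^2\|_\infty\lesssim k\,h^{-3d}$, and by symmetry the same holds for $\mathbb{E}_{Z_1}$; this is the $B$-type bound. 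Integrating once more over $X_1$ contributes another factor $h^d$ and gives $\sigma^2\lesssim k/h^{2d}$. The first-order terms that are subtracted in (\ref{eq::deg-kernel3}) have smaller order, so they do not change these bounds.

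\emph{Operator-norm bound ($D$).} For $\mathbb{E}[\tilde\xi g_1g_2]$ with unit-$L_2$ functions $g_1,g_2$, again only $T_2$ matters, since the remaining pieces are products of one-variable functions and are controlled by Cauchy--Schwarz and the first-order bounds; for instance, $|\mathbb{E}\,T_1g_1|\,|\mathbb{E}g_2|\lesssim h^{-d/2}$. Define
\[
u_i=\int b_{hk}^x K_{hk}^x\,c_i\,g_i\,dF, \qquad i=1,2,
\]
with bounded weights $c_i$. Then $\mathbb{E}[T_2g_1g_2]=u_1^T\hat\Omega_x^{-1}u_2$, so it suffices to bound $\|u_i\|$. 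By duality, $\|u_i\|=\sup_{\|a\|=1}\int a^Tb_{hk}^xK_{hk}^x c_ig_i\,dF$. Cauchy--Schwarz gives
\[
\|u_i\|\le\|g_i\|_{L_2(F)}\Bigl(\sup_{\|a\|=1}\int (a^Tb_{hk}^x)^2 (K_{hk}^x)^2\,dF\Bigr)^{1/2}\lesssim (h^{-2d}h^d)^{1/2}=h^{-d/2},
\]
where the inner supremum is bounded by $\lambda_{\max}(\Omega_x)$. Hence $|u_1^T\hat\Omega_x^{-1}u_2|\lesssim h^{-d}$.

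I expect this $D$ bound to be the main obstacle. It is the step where the naive estimate $k/h^{d}$ must be avoided, and that requires using the eigenvalue bounds rather than the sup-norm bound (\ref{eq::b-sup}).
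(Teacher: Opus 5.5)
Your proof is correct. For the sup-norm, first-order variance and $B$-type bounds it follows the paper essentially step by step. The one-variable projection of the second-order term is $h^{-d}$ times $\hat\Pi_x$ of a bounded function, controlled by Proposition~\ref{lemma::basis}. The conditional second moment is bounded using $\|b_k(v)\|^2\lesssim k$, $\|\hat\Omega_x^{-1}\|\lesssim1$ and an upper Gram-eigenvalue bound. The only difference in these parts is that you derive $\sigma^2([\tilde\xi_1^x]_j)$ and $\sigma^2([\tilde\xi^x]_j)$ directly rather than citing Proposition~7 of \cite{kennedy2024minimax}.

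The genuine difference is the $D$ bound. The paper writes the degenerate kernel exactly as $-\{u_{hk}^x(Z_1)-\mu_u\}^T\hat\Omega_x^{-1}\{v_{hk}^x(Z_2)-\mu_v\}$ and bounds $\|\mathbb{E}[g_1(u_{hk}^x-\mu_u)]\|^2$ coordinate by coordinate. That argument needs the per-coordinate bounds $\int[b_k]_\ell^2\lesssim1$ and $\int|[b_k]_\ell|\lesssim k^{-1/2}$ (the latter to get $\|\mu_u\|\lesssim1$), together with the bounded-overlap property in \eqref{eq::b-support}.

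You instead split off the one-variable pieces, each at most $h^{-d/2}$ by Cauchy--Schwarz. You then bound $\|u_i\|$ by duality, using $\sup_{\|a\|=1}\mathbb{E}[(a^Tb_{hk}^x)^2(K_{hk}^x)^2]\le h^{-d}\lambda_{\max}(\Omega_x)$, which is a Bessel-type inequality. This route is shorter and uses only $\lambda_{\max}(\Omega_x)\lesssim1$ and $\|\hat\Omega_x^{-1}\|\lesssim1$. That step would therefore survive for non-localized bases, although the proposition as a whole still relies on \eqref{eq::b-support} and \eqref{eq::b-row-sum} through Proposition~\ref{lemma::basis} in the first-order sup-norm bounds. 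The paper's route reuses that same local-support structure, so it costs nothing extra in context.

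Two cosmetic corrections:
\begin{itemize}
\item Since $c_i$ and $g_i$ depend on $(A,Y)$, your $u_i$ are expectations under the law of $Z$, not integrals against $dF$.
\item The inner supremum is bounded by $h^{-d}\lambda_{\max}(\Omega_x)$, using $K_{hk}^x\le h^{-d}$, not by $\lambda_{\max}(\Omega_x)$ alone. Your final order $h^{-d/2}$ already reflects this.
\end{itemize}
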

\begin{proof}
The proof is provided in Appendix~\ref{app::kernel-bound}
\end{proof}
Applying Lemma~\ref{lemma::concentration} together with Lemma~\ref{lemma::maximum}, we first bound the contribution of the completely degenerate second-order kernel $[\tilde{\xi}^x]_j$. Substituting the bounds from Proposition~\ref{prop::kernel-bound} into the tail bound for degenerate U-statistics yields
\begin{align}
\nonumber
    \mathbb{E} \left(
    \max_{1 \leq \ell \leq M}
    \right. & \left.\max_{1 \leq j \leq q} |\mathbb{U}_n([\tilde{\xi}^{x_\ell}]_j)|^2 \mid D^n
    \right) \\
\nonumber
    \lesssim &
    \frac{1}{n^4} [
    \frac{n^2k }{h^{2d}}\log(Mq)
    +
    \frac{n^2 }{h^{2d}} (\log(Mq))^2 +
    {\frac{nk}{h^{3d}}} (\log(Mq))^{3} +
    \frac{k^2}{h^{4d}} (\log(Mq))^4
    ] \\
\nonumber
    \lesssim &
    \frac{1}{n^4} [
    \frac{n^2k }{h^{2d}}\log n
    +
    \frac{n^2}{h^{2d}} (\log n)^2 +
    {\frac{nk}{h^{3d}}} (\log n)^{3} +
    \frac{k^2}{h^{4d}} (\log n)^4
    ] \\
\label{eq::bound-second}
    \lesssim &
    \frac{{k} \log n}{n^2 h^{2d}} ,
\end{align}
under the assumptions that $Mq$ is polynomial in $n$ of fixed order, $nh^d \gtrsim (\log n)^2$, $k\gtrsim\log n$, and $k (\log n)^3 \lesssim (nh^d)^2$.

Applying the same argument to the empirical-process terms associated with  $[\tilde{\xi}_1^x]_j$ and $[\tilde{\xi}_2^x]_j$ gives
\begin{equation}
    \mathbb{E}\left( \max_{1 \leq \ell \leq M} \max_{1 \leq j \leq q} | \mathbb{P}_n([\tilde{\xi}_i^{x_\ell}]_j(Z)) |^2 \mid D^n  \right) \lesssim
    \frac{\log n}{nh^d} + \frac{(\log n)^2}{n^2 h^{2d}} \lesssim \frac{\log n}{nh^d}
\end{equation}
under the assumption. Combining the preceding bounds then yields
\begin{align*}
 \mathbb{E}\left(
    \max_{1 \leq \ell \leq M} \max_{1 \leq j \leq q}
    \left|
    [\hat{S}_{x_\ell}]_j - \mathbb{E}([\hat{S}_{x_\ell}]_j \mid D^n)
    \right|^2 \mid D^n
    \right)
    \lesssim
    \frac{k \log n}{n^2 h^{2d}} + \frac{\log n}{nh^d}.
\end{align*}
\end{proof}

\subsubsection{Proof for Proposition~\ref{prop::kernel-bound}} \label{app::kernel-bound}
\begin{proof}
For a generic function $g$, define its localized version at $x$ by $\tilde{g}^{*,x}(v) = g(x+h(v-1/2))$. Let $[\xi_1^x]_j(Z_1) = \mathbb{E}_{Z_2}\left( [\xi^x]_j (Z_1, Z_2) \mid Z_1\right)$ and $[\xi_2^x]_j(Z_2) = \mathbb{E}_{Z_1}\left( [\xi^x]_j (Z_1, Z_2) \mid Z_2\right)$. A direct calculation yields
\begin{align*}
    [\xi_1^x]_j(Z_1)  = & [\rho_h]_j^x(X_1)K_{hk}^x(X_1)
    \left( A_1 - \hat{\pi}(X_1)\right)
    \left(Y_1 - \hat{\mu}_0(X_1)-A_1\tilde{\tau}_h^x(X_1) \right)  \\
    -
    [\rho_h]_j^x&(X_1)  K_{hk}^x(X_1) (A_1 - \hat{\pi}(X_1)) \hat{\Pi}_x(\mu_0^{*,x} - \hat{\mu}_0^{*,x} + {\pi}^{*,x} \tau^{*,x}-\pi^{*,x}(\tilde{\tau}_h^x)^{*,x})(\frac{X_1-x}{h}+\frac{1}{2}),
\end{align*}
and
\begin{align*}
    [\xi_2^x]_j&(Z_2)
    = \int_{[0,1]^d}
    ([\rho]_j\pi^{*,x}-[\rho]_j\hat{\pi}^{*,x})(v) \cdot(\mu_0^{*,x}-\hat{\mu}_0^{*,x} + \pi^{*,x}\tau^{*,x}-\pi^{*,x}(\tilde{\tau}_h^x)^{*,x})(v){K_k(v)\,}dF^{*,x}(v) \\
    & \quad - \hat{\Pi}_x([\rho]_j\pi^{*,x}-[\rho]_j\hat{\pi}^{*,x})(\frac{X_2-x}{h}+\frac{1}{2}) \left(Y_2 - \hat{\mu}_0(X_2)-A_2\tilde{\tau}_h^x(X_2) \right)K_{hk}^x(X_2).
\end{align*}
where $\tilde{\tau}_h^x$ is defined in (\ref{eq::tilde-tau}). Since  $\hat{\mu}_0$, $\hat{\pi}$, and $y$ are uniformly bounded, Proposition~\ref{lemma::basis} implies
\[
\|[\xi_1^x]_j\|_{ L_\infty(\mathcal{Z}) }, \|[\xi_2^x]_j\|_{ L_\infty(\mathcal{Z}) } \lesssim \frac{1}{h^d},
\]
and by condition (\ref{eq::b-sup}),
\[
\|[\xi^x]_j\|_{ L_\infty(\mathcal{Z}) } \lesssim \frac{k}{h^{2d}}.
\]
Therefore,
\[
\|[\tilde{\xi}_i^x]_j\|_{L_\infty(\mathcal Z)}
\leq
\|[\xi_i^x]_j\|_{L_\infty(\mathcal Z)}
+|\mathbb E([\xi_i^x]_j)|
\lesssim
\frac{1}{h^d}
+\left(\frac{h}{k^{1/d}}\right)^{2s}
+h^\gamma
\lesssim
\frac{1}{h^d},
\quad i=1,2,
\]
where the expectation term is bounded by Proposition~\ref{prop::bias-refined}. Similarly,
\[
    \|[\tilde{\xi}^x]_j\|_{L_\infty(\mathcal{Z})}
    \lesssim
    \|[\xi^x]_j\|_{L_\infty(\mathcal{Z})} +
    \|[\xi^x_1]_j\|_{L_\infty(\mathcal{Z})}  +
    \|[\xi^x_2]_j\|_{L_\infty(\mathcal{Z})}  +
    |\mathbb{E}([\xi]_j )| \lesssim \frac{k}{h^{2d}}.
\]
Next, Proposition~7 of \cite{kennedy2024minimax} gives
\begin{equation} \label{eq::sigma-1}
\sigma^2([\tilde{\xi}_1^x]_j) \lesssim
\mathbb{E}(([{\xi}_1^x]_j)^2) \lesssim \frac{1}{h^d}.
\end{equation}
As for $[\tilde{\xi}_2^x]_j$,
\begin{align}
    \nonumber
    \sigma^2([\tilde{\xi}_2^x]_j)
    =&\operatorname{Var}_{Z_2}\left(
    \hat{\Pi}_x([\rho]_j\pi^{*,x}-[\rho]_j\hat{\pi}^{*,x})(\frac{X_2-x}{h}+\frac{1}{2}) \left(Y_2 - \hat{\mu}_0(X_2)-A_2\tilde{\tau}_h^x(X_2) \right)K_{hk}^x(X_2)
    \right) \\
    \nonumber
    \lesssim &
    \mathbb{E}_{Z_2}\left(
    \hat{\Pi}_x([\rho]_j\pi^{*,x}-[\rho]_j\hat{\pi}^{*,x})(\frac{X_2-x}{h}+\frac{1}{2})^2 K_{hk}^x(X_2)^2
    \right) \\
    \nonumber
    \leq &
    \frac{1}{h^{d}}
    \int_{[0,1]^d}  \hat{\Pi}_x([\rho]_j\pi^{*,x}-[\rho]_j\hat{\pi}^{*,x})(v)^2K_{k}(v)dF^{*,x}(v) \\
    \nonumber
    = &
    \frac{1}{h^d}\|\hat{\Pi}_x([\rho]_j\pi^{*,x}-[\rho]_j\hat{\pi}^{*,x})\|^2_{K\cdot F^{*,x}}\\
    \label{eq::sigma-2}
    \lesssim & \frac{1}{h^{d}}
\end{align}
where the last inequality follows from Proposition~\ref{lemma::basis} applied to the bounded function $[\rho]_j\pi^{*,x}-[\rho]_j\hat{\pi}^{*,x}$ . Furthermore,
\begin{align*}
    \left \| \mathbb{E}_{Z_1} \left( [\tilde{\xi}^x]_j(Z_1,z_2)^2
    \right)
    \right \|_{L_\infty(\mathcal{Z})}
    =&
    \left \|
    \mathbb{E}_{Z_1}
    \left(
    \left(
    [{\xi}^x]_j(Z_1,z_2) - [\xi_2^x]_j(z_2)
    - [\xi_1^x]_j(Z_1)  + \mathbb{E}( [\xi^x]_j)
    \right)^2
    \right)\
    \right \|_{L_\infty(\mathcal{Z})} \\
    \lesssim &
     \left \| \mathbb{E}_{Z_1}
     \left(
     \left( [{\xi}^x]_j(Z_1, z_2) -
    [\xi_2^x]_j(z_2) \right)^2
    \right)
    \right \|_{L_\infty(\mathcal{Z})}
    + \sigma^2([\xi_1^x]_j).
\end{align*}
The first term on the right hand side can be represented as
\begin{align*}
    \left \| \mathbb{E}_{Z_1} \left( \left( [{\xi}^x]_j(Z_1, z_2) -
    [\xi_2^x]_j(z_2) \right)^2 \right)
    \right \|_{L_\infty(\mathcal{Z})}
    = &
    \|
    \operatorname{Var}_{Z_1}
    \left(
    [{\xi}^x]_j(Z_1, z_2)
    \right) \|_{L_\infty(\mathcal{Z})}
    \\
    \lesssim &
    \left \|
    \mathbb{E}_{Z_1} \left(
    \left( [
    \rho_h^x(X_1)]_j K_{hk}^x(X_1) \right)^2
    \right)
    \right \|_{L_\infty(\mathcal{Z})}
     \\
     & \quad +
    \left \|
    \mathbb{E}_{Z_1} \left(
    (
    b_{hk}^x(X_1)^T \hat{\Omega}_x^{-1} b_{hk}^x(x_2)
    )^2
     K_{hk}^x(X_1)^2
    \right)
    K_{hk}^x(x_2)^2
    \right \|_{L_\infty(\mathcal{Z})} \\
    \lesssim
    &
    \frac{1}{h^d} + \frac{1}{h^d}\sup_{\|x_2-x\|\leq h/2}
    \left|
    b_{hk}^x(x_2)^T\hat{\Omega}_x^{-1}\Omega_x
    \hat{\Omega}_x^{-1} b_{hk}^x(x_2) K_{hk}^x(x_2)^2
    \right|
    \\
    \lesssim &
    \frac{k}{h^{3d}},
\end{align*}
where the first inequality uses the boundedness of $y^2$, $\hat{\mu}_0^2$ and $\hat{\pi}^2$, while the last inequality follows from condition~(\ref{eq::b-sup}), Lemma~\ref{lemma::eigen-omegahat}, and submultiplicativity of the operator norm. An identical argument yields
\begin{align*}
     \left \|  \mathbb{E}_{Z_2} \left( [\tilde{\xi}^x]_j(z_1,Z_2)^2
    \right)
    \right \|_{L_\infty(\mathcal{Z})}
    \lesssim &
    \frac{k}{h^{3d}}
    + \sigma^2([\xi_2^x]_j).
\end{align*}
Combining the bounds with (\ref{eq::sigma-1}) and (\ref{eq::sigma-2}) then yields
\[
    \left \|
    \mathbb{E}_{Z_1}\left( [\tilde{\xi}^x]_j(Z_1, z_2)^2\right)\right  \|_{L_\infty(\mathcal{Z})}
    ,
    \left \|\mathbb{E}_{Z_2}\left( [\tilde{\xi}^x]_j(z_1, Z_2)^2\right)  \right  \|_{L_\infty(\mathcal{Z})}
    \lesssim \frac{k}{h^{3d}}.
\]

Moreover, Proposition~7 of \cite{kennedy2024minimax} shows that $\mathbb{E}([\xi^x]_j(Z_1,Z_2)^2)\lesssim \frac{k}{h^{2d}} $. Therefore,
\[
\sigma^2([\tilde{\xi}^x]_j)
\lesssim
\mathbb{E}
\left( [\xi^x]_j(Z_1, Z_2)^2 \right) + \sigma^2([\xi_1^x]_j) + \sigma^2([\xi_2^x]_j) + |\mathbb{E}([\xi^x]_j)|^2 \lesssim  \frac{k}{h^{2d}}.
\]

Finally, define
\[
u_{hk}^x(Z) = [\rho_h^x(X)]_j K_{hk}^x(X) (A-\hat{\pi}(X)) b_{hk}^x(X),
\]
and
\begin{equation}\label{eq::kernel-outcome-factor}
v_{hk}^x(Z) =  K_{hk}^x(X)(Y-\hat{\mu}_0(X)-A\rho_h^x(X)^T \theta_x) b_{hk}^x(X).
\end{equation}

Suppressing the dependence on $x$, $h$, and $k$, let
\[
\mu_u=\mathbb E\{u_{hk}^x(Z)\},
\qquad
\mu_v=\mathbb E\{v_{hk}^x(Z)\},
\]
and denote their $\ell$th coordinates by $[\mu_u]_\ell$ and $[\mu_v]_\ell$, respectively. By the boundedness of the density and condition~\eqref{eq::b-support},
\begin{equation}\label{eq::uj}
\mathbb E\left\{[u_{hk}^x]_\ell(Z)^2\right\}
\lesssim
h^{-d}
\int_{[0,1]^d}[b_k]_\ell(v)^2\,dF^{*,x}(v)
\lesssim h^{-d}.
\end{equation}
Moreover,
\[
[\mu_u]_\ell^2
\lesssim
\left(
\int_{[0,1]^d}|[b_k]_\ell(v)|\,dv
\right)^2
\lesssim k^{-1},
\]
which implies
\begin{equation}\label{eq::mu-u}
\|\mu_u\|^2
=
\sum_{\ell=1}^k[\mu_u]_\ell^2
\lesssim 1.
\end{equation}
The same argument gives
\[
\mathbb E\left\{[v_{hk}^x]_\ell(Z)^2\right\}
\lesssim h^{-d},
\qquad
[\mu_v]_\ell^2\lesssim k^{-1},
\qquad
\|\mu_v\|^2\lesssim1.
\]
The degenerate kernel admits the representation
\[
[\tilde\xi^x]_j(Z_1,Z_2)
=
-\left\{u_{hk}^x(Z_1)-\mu_u\right\}^T
\hat\Omega_x^{-1}
\left\{v_{hk}^x(Z_2)-\mu_v\right\}.
\]
For any measurable functions $g_1$ and $g_2$ satisfying
\[
\mathbb E\{g_1(Z_1)^2\}\leq1,
\qquad
\mathbb E\{g_2(Z_2)^2\}\leq1,
\]
independence of $Z_1$ and $Z_2$ gives
\begin{align*}
&\left|
\mathbb E\left\{
[\tilde\xi^x]_j(Z_1,Z_2)
g_1(Z_1)g_2(Z_2)
\right\}
\right|\\
&\quad=
\left|
\mathbb E_{Z_1}\left[
g_1(Z_1)\{u_{hk}^x(Z_1)-\mu_u\}
\right]^T
\hat\Omega_x^{-1}
\mathbb E_{Z_2}\left[
g_2(Z_2)\{v_{hk}^x(Z_2)-\mu_v\}
\right]
\right|\\
&\quad\leq
\left\|
\mathbb E_{Z_1}\left[
g_1(Z_1)\{u_{hk}^x(Z_1)-\mu_u\}
\right]
\right\|
\|\hat\Omega_x^{-1}\|
\left\|
\mathbb E_{Z_2}\left[
g_2(Z_2)\{v_{hk}^x(Z_2)-\mu_v\}
\right]
\right\|.
\end{align*}

To bound the first norm, we have
\begin{align*}
&\left\|
\mathbb E_Z\left[
g_1(Z)\{u_{hk}^x(Z)-\mu_u\}
\right]
\right\|^2\\
&\quad\leq
2\left\|
\mathbb E_Z\{g_1(Z)u_{hk}^x(Z)\}
\right\|^2
+
2\|\mu_u\|^2
\left|\mathbb E_Z\{g_1(Z)\}\right|^2\\
&\quad\lesssim
\sum_{\ell=1}^k
\left[
\mathbb E\left\{
g_1(Z)[u_{hk}^x]_\ell(Z)
\right\}
\right]^2
+
\mathbb E\{g_1(Z)^2\}\\
&\quad\leq
\sum_{\ell=1}^k
\mathbb E\left\{[u_{hk}^x]_\ell(Z)^2\right\}
\mathbb E\left[
g_1(Z)^2
\mathbbm 1\left\{
Z\in\operatorname{Supp}([u_{hk}^x]_\ell)
\right\}
\right]
+1\\
&\quad\lesssim
h^{-d}
\mathbb E\left[
g_1(Z)^2
\sum_{\ell=1}^k
\mathbbm 1\left\{
X\in\operatorname{Supp}([b_{hk}^x]_\ell)
\right\}
\right]
+1\\
&\quad\lesssim h^{-d}.
\end{align*}
The second line uses \eqref{eq::mu-u} and Cauchy--Schwarz, the third uses coordinatewise Cauchy--Schwarz, and the fourth uses \eqref{eq::uj} together with
\[
\left\{
Z\in\operatorname{Supp}([u_{hk}^x]_\ell)
\right\}
\subseteq
\left\{
X\in\operatorname{Supp}([b_{hk}^x]_\ell)
\right\}.
\]
The final inequality follows from condition~\eqref{eq::b-support} and $\mathbb E\{g_1(Z)^2\}\leq1$. An analogous argument gives
\[
\left\|
\mathbb E_Z\left[
g_2(Z)\{v_{hk}^x(Z)-\mu_v\}
\right]
\right\|^2
\lesssim h^{-d}.
\]
Since $\|\hat\Omega_x^{-1}\|\lesssim1$, it then follows that
\[
\left|
\mathbb E\left\{
[\tilde\xi^x]_j(Z_1,Z_2)
g_1(Z_1)g_2(Z_2)
\right\}
\right|
\lesssim h^{-d}.
\]
\end{proof}

\subsection{Proof for Proposition~\ref{prop::l_inf_2}}
\label{app::l_inf_2}
\begin{proof}
Denote the kernel of $\hat{S}_x$ by
\begin{align*}
\xi^x(Z_1,Z_2)
={}&\rho_h^x(X_1)K_{hk}^x(X_1)
\left\{
\hat{\psi}_{y_1}(Z_1)
-\hat{\psi}_{a_1}(Z_1)\rho_h^x(X_1)^T\theta_x
\right\}\\
&+\rho_h^x(X_1)K_{hk}^x(X_1)
\left\{
\hat{\psi}_{y_2}^x(Z_1,Z_2)
-\hat{\psi}_{a_2}^x(Z_1,Z_2)\rho_h^x(X_2)^T\theta_x
\right\}K_{hk}^x(X_2).
\end{align*}
In the kernel-weighted terms, it suffices to consider $X\in\mathcal C_h(x)\cup\mathcal C_h(x')$, since both kernels vanish outside this union. For any $x,x'\in\mathcal X$ satisfying $\|x-x'\|\lesssim M^{-1/d}$, the stated condition gives $M^{-1/d}=o(h)$. For $X\in\mathcal C_h(x)\cup\mathcal C_h(x')$, both $(X-x)/h+1/2$ and $(X-x')/h+1/2$ lie in a fixed bounded region, on which $\rho$ is uniformly bounded and Lipschitz. Hence,
\[
\sup_{X\in {\mathcal C_h(x)\cup\mathcal C_h(x')}}
\|\rho_h^x(X)-\rho_h^{x'}(X)\|
\lesssim \frac{M^{-1/d}}h.
\]
Moreover, condition~\eqref{eq::K-lip} implies
\[
\sup_X|K_{hk}^x(X)-K_{hk}^{x'}(X)|
\lesssim
\frac{k^2}{h^{d+1}}\|x-x'\|
\lesssim
\frac{k^2M^{-1/d}}{h^{d+1}}.
\]
Combining the first two bounds and noting that both kernels vanish outside $\mathcal C_h(x)\cup\mathcal C_h(x')$ gives
\begin{align*}
&\sup_X
\|\rho_h^x(X)K_{hk}^x(X)
-\rho_h^{x'}(X)K_{hk}^{x'}(X)\|\\
&\quad\lesssim
h^{-d}
\sup_{X\in\mathcal C_h(x)\cup\mathcal C_h(x')}
\|\rho_h^x(X)-\rho_h^{x'}(X)\|
+\sup_X|K_{hk}^x(X)-K_{hk}^{x'}(X)|\\
&\quad\lesssim
\frac{k^2M^{-1/d}}{h^{d+1}}.
\end{align*}
For a fixed reference basis, both $(X-x)/h+1/2$ and $(X-x')/h+1/2$ belong to $[0,1]^d$ on $\mathcal C_h(x)\cap\mathcal C_h(x')$. Thus condition~\eqref{eq::b-lip} gives
\[
\sup_{X\in {\mathcal C_h(x)\cap\mathcal C_h(x')}}
\|b_{hk}^x(X)-b_{hk}^{x'}(X)\|
\lesssim \frac{k^LM^{-1/d}}h.
\]
On $\mathcal C_h(x)\cap\mathcal C_h(x')$, use the decomposition
\begin{align*}
&b_{hk}^xK_{hk}^x(b_{hk}^x)^T
-b_{hk}^{x'}K_{hk}^{x'}(b_{hk}^{x'})^T=
(b_{hk}^x-b_{hk}^{x'})K_{hk}^x(b_{hk}^x)^T
+b_{hk}^{x'}(K_{hk}^x-K_{hk}^{x'})(b_{hk}^x)^T
+b_{hk}^{x'}K_{hk}^{x'}(b_{hk}^x-b_{hk}^{x'})^T.
\end{align*}
Outside this intersection, at most one kernel is nonzero, so the norm of the matrix difference on the left-hand side above is bounded by $Ck|K_{hk}^x-K_{hk}^{x'}|$. Together with condition~\eqref{eq::b-sup}, this yields
\begin{align*}
\|\hat\Omega_x-\hat\Omega_{x'}\|
&\le
\int
\left\|
b_{hk}^xK_{hk}^x(b_{hk}^x)^T
-b_{hk}^{x'}K_{hk}^{x'}(b_{hk}^{x'})^T
\right\|\,d\hat F(X)\\
&\lesssim
\left(
\frac{k^{L+1/2}M^{-1/d}}{h^{d+1}}
+\frac{k^3M^{-1/d}}{h^{d+1}}
\right)\int d\hat F(X)\\
&\lesssim
\frac{k^{L+3}M^{-1/d}}{h^{d+1}}.
\end{align*}
Applying the same argument to $Q_x$ and $R_x$ gives
\[
\|Q_x-Q_{x'}\|
\lesssim \frac{k^2M^{-1/d}}{h^{d+1}},
\qquad
\|R_x-R_{x'}\|
\lesssim \frac{k^2M^{-1/d}}{h^{d+1}}.
\]
Since $\|\hat\Omega_x^{-1}\|$ and $\|Q_x^{-1}\|$ are uniformly bounded, the identity $A^{-1}-B^{-1}=A^{-1}(B-A)B^{-1}$ implies
\[
\|\hat\Omega_x^{-1}-\hat\Omega_{x'}^{-1}\|
\lesssim
\frac{k^{L+3}M^{-1/d}}{h^{d+1}},
\qquad
\|Q_x^{-1}-Q_{x'}^{-1}\|
\lesssim
\frac{k^2M^{-1/d}}{h^{d+1}}.
\]
{The local mass bound gives $\sup_x\|R_x\|\lesssim1$, and hence $\sup_x\|\theta_x\|\lesssim1$.} Recalling that $\theta_x=Q_x^{-1}R_x$,
\begin{align*}
\|\theta_x-\theta_{x'}\|
&\le
\|Q_x^{-1}-Q_{x'}^{-1}\|\,\|R_x\|
+\|Q_{x'}^{-1}\|\,\|R_x-R_{x'}\| \lesssim
\frac{k^2M^{-1/d}}{h^{d+1}}.
\end{align*}
Furthermore,
\begin{align*}
&\sup_{ {X_1,X_2\in
\mathcal C_h(x)\cap\mathcal C_h(x')}}
\left|
b_{hk}^x(X_1)^{ {T}}
\hat\Omega_x^{-1}b_{hk}^x(X_2)
-
b_{hk}^{x'}(X_1)^{ {T}}
\hat\Omega_{x'}^{-1}b_{hk}^{x'}(X_2)
\right|\\
&\quad\lesssim
\sqrt{k}
\sup_{X\in {\mathcal C_h(x)\cap\mathcal C_h(x')}}
\|b_{hk}^x(X)-b_{hk}^{x'}(X)\|
+k\|\hat\Omega_x^{-1}-\hat\Omega_{x'}^{-1}\|\\
&\quad\lesssim
\frac{k^{L+4}M^{-1/d}}{h^{d+1}}.
\end{align*}
For the second-order terms in $\xi^x(Z_1, Z_2)$, we apply the preceding bilinear bound when $X_1,X_2\in\mathcal C_h(x)\cap\mathcal C_h(x')$. If $X_1$ lies outside this intersection, at least one kernel at $X_1$ vanishes, so the norm of the difference between the second-order terms is bounded by
\[
\frac{Ck}{h^d}
\|\rho_h^x(X_1)K_{hk}^x(X_1)
-\rho_h^{x'}(X_1)K_{hk}^{x'}(X_1)\| .
\]
If $X_1$ lies in the intersection but $X_2$ does not, at least one kernel at $X_2$ vanishes, giving the bound
\[
\frac{Ck}{h^d}
|K_{hk}^x(X_2)-K_{hk}^{x'}(X_2)|.
\]
Combining these cases with the preceding bounds and the uniform boundedness of $Y$, $\hat\pi$, and $\hat\mu_0$ gives
\begin{align*}
&\mathbb E\Biggl[\sup_{\substack{x,x'\in\mathcal X\\ \|x-x'\|\lesssim M^{-1/d}}}
\|\xi^x(Z_1,Z_2)-\xi^{x'}(Z_1,Z_2)\|^2 \;\Bigm|\;D^n\Biggr]\\
&\quad\lesssim
\mathbb E\Biggl[\sup_{\substack{x,x'\in\mathcal X\\ \|x-x'\|\lesssim M^{-1/d}}}
\Biggl\{
\left(1+\frac{k^2}{h^{2d}}\right)
\|\rho_h^x(X_1)K_{hk}^x(X_1)-\rho_h^{x'}(X_1)K_{hk}^{x'}(X_1)\|^2\\
&\qquad+
\frac{\mathbbm{1}\{X_1\in\mathcal C_h(x)\cup\mathcal C_h(x')\}}{h^{2d}}
\bigl|\rho_h^x(X_1)^T\theta_x-\rho_h^{x'}(X_1)^T\theta_{x'}\bigr|^2\\
&\qquad+
\frac{k^2}{h^{2d}}
\Bigl\{
\bigl|\rho_h^x(X_2)^T\theta_xK_{hk}^x(X_2)
-\rho_h^{x'}(X_2)^T\theta_{x'}K_{hk}^{x'}(X_2)\bigr|^2
+|K_{hk}^x(X_2)-K_{hk}^{x'}(X_2)|^2
\Bigr\}\\
&\qquad+
\frac{\mathbbm{1}\{X_1,X_2\in\mathcal C_h(x)\cap\mathcal C_h(x')\}}{h^{4d}}
\Bigl|b_{hk}^x(X_1)^T\hat\Omega_x^{-1}b_{hk}^x(X_2)
-b_{hk}^{x'}(X_1)^T\hat\Omega_{x'}^{-1}b_{hk}^{x'}(X_2)\Bigr|^2
\Biggr\} \;\Bigm|\;D^n\Biggr]\\
&\quad\lesssim
(1+\frac{k^2}{h^{2d}})\left(\frac{k^2M^{-1/d}}{h^{d+1}}\right)^2
+\frac{k^2}{h^{2d}}\left(\frac{k^2M^{-1/d}}{h^{2d+1}}\right)^2
+\frac1{h^{4d}}\left(\frac{k^{L+4}M^{-1/d}}{h^{d+1}}\right)^2\\
&\quad\lesssim
\left(\frac{k^{L+4}M^{-1/d}}{h^{3d+1}}\right)^2.
\end{align*}
Since $\hat S_x=\mathbb U_n(\xi^x)$, Jensen's inequality transfers this bound to $\hat S_x-\hat S_{x'}$. Therefore, when $M^{1/d}\gtrsim nk^{L+7/2}/h^{2d+1}$,
\[
\mathbb E\left[
\sup_{\substack{x,x'\in\mathcal X\\
\|x-x'\|\le\sqrt d\,M^{-1/d}}}
\|\hat S_x-\hat S_{x'}\|^2
\;\middle|\;D^n
\right]
\lesssim \frac{k}{n^2h^{2d}}.
\]
\end{proof}

\section{Discussion of conditions in Theorem~\ref{thm::upper}}

\subsection{Eigenvalue conditions for $Q_x$, $\Omega_x$, and $\hat{\Omega}_x$}
\label{app::eigenvalues}
\begin{proposition} \label{prop:eigen}
Suppose the following conditions hold
    \begin{itemize}
        \item[1.] There exists some constant $0 < \varepsilon < 1/2$ such that $\varepsilon < \pi(x) < 1-\varepsilon$ uniformly for $x \in \mathcal{X}$
        \item[2.] The covariate density $dF(X)$ is bounded above and below away from zero on the support $\mathcal{X}$, and $\int \mathbbm{1}\{X \in \mathcal{C}_h(x)\} dF(X) \asymp h^d$.
        \item[3.] The second-order basis $b_k$ satisfies the required conditions in (\ref{eq::b-L2}-\ref{eq::b-support}), its Gram matrix $\int_{[0,1]^d} b_k(v) b_k(v)^T dv $ has eigenvalues bounded above and below away from zero, and
        \[
        \lambda_{\min} \left(
        \int_{[0,1]^d} b_k(v) b_k(v)^T dF^{*,x}(v)
        \right)
        \gtrsim \lambda_{\min} \left(
        \int_{[0,1]^d} b_k(v) b_k(v)^T dv
        \right).
        \]
    \end{itemize}
Then, for sufficiently large $k$, the eigenvalues of $Q_x$ and $\Omega_x$ are uniformly bounded above and away from zero over $x\in\mathcal X$.
\end{proposition}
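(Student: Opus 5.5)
The plan is to treat $Q_x$ and $\Omega_x$ separately. For each, I will use a change of variables to the rescaled cube $[0,1]^d$, then compare the kernel-weighted Gram matrix with its unweighted (uniform-kernel) counterpart. The difference is supported on the boundary strip where $K_k\neq 1$.

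\emph{Step 1 ($Q_x$).} Substitute $X=x+h(v-1/2)$. This gives
\[
Q_x=\int_{[0,1]^d}\rho(v)\,K_k(v)\,\pi^{*,x}(v)\{1-\pi^{*,x}(v)\}\,\rho(v)^T f(x+h(v-1/2))\,dv .
\]
\emph{Upper bound.} Since $K_k\le 1$, $\pi(1-\pi)\le 1/4$, $f$ is bounded above, and $\rho$ is a fixed polynomial basis bounded on $[0,1]^d$, we get $\lambda_{\max}(Q_x)\lesssim 1$.

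\emph{Lower bound.} Fix a unit vector $a\in\mathbb{R}^q$. Using $\pi(1-\pi)\ge\varepsilon(1-\varepsilon)$ and $K_k=1$ on $\mathcal{C}_{1-1/k^2}(1/2)$,
\[
a^TQ_xa\ \ge\ \varepsilon(1-\varepsilon)\int_{\mathcal{C}_{1-1/k^2}(1/2)}(a^T\rho(v))^2 f(x+h(v-1/2))\,dv .
\]
Here I use that $f$ is bounded below on $\mathcal{X}$. The remaining difficulty is that the rescaled support $\{v: x+h(v-1/2)\in\mathcal{X}\}$ may be a proper subset of the cube. Condition 2, $\int\mathbbm{1}\{X\in\mathcal{C}_h(x)\}dF\asymp h^d$, guarantees that this set has Lebesgue measure bounded below by a constant $c_0$. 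The polynomials $a^T\rho$ lie in a fixed finite-dimensional space, so a compactness/equivalence-of-norms argument gives
\[
\inf_{\|a\|=1,\ |E|\ge c_0/2}\int_E (a^T\rho)^2\,dv>0 .
\]
This holds because a nonzero polynomial cannot concentrate its $L_2$ mass on a small set: by a Remez-type inequality, $\sup|p|$ over $[0,1]^d$ is controlled by $\sup_E|p|$ up to constants depending on $|E|$, and the $L_2(E)$ norm then follows by the same finite-dimensional argument. For $k$ large, removing the strip of volume $\lesssim 1/k^2$ keeps the measure of the effective set at least $c_0/2$. This gives $\lambda_{\min}(Q_x)\gtrsim 1$ uniformly in $x$.

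\emph{Step 2 ($\Omega_x$).} Write
\[
\Omega_x=\int_{[0,1]^d} b_kb_k^T\,dF^{*,x}\;-\;\int_{[0,1]^d} (1-K_k)\,b_kb_k^T\,dF^{*,x}.
\]
\emph{Upper bound.} This follows from $K_k\le1$, $f$ bounded above, and the bounded eigenvalues of the Gram matrix $\int b_kb_k^Tdv$.

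\emph{Lower bound.} The assumption in condition 3 gives
\[
\lambda_{\min}\Bigl(\int b_kb_k^T dF^{*,x}\Bigr)\gtrsim 1 .
\]
The correction term is positive semidefinite and supported on the strip $\mathcal{S}_k=[0,1]^d\setminus\mathcal{C}_{1-1/k^2}(1/2)$. For a unit vector $a$, condition \eqref{eq::b-sup} gives $(a^Tb_k(v))^2\le\|b_k(v)\|^2\lesssim k$. Hence
\[
a^T\Bigl(\int (1-K_k)\,b_kb_k^T\,dF^{*,x}\Bigr)a\ \lesssim\ k\cdot|\mathcal{S}_k|\ \lesssim\ k\cdot k^{-2}=k^{-1}\to 0 .
\]
So for sufficiently large $k$ the perturbation is below half the lower eigenvalue bound, and Weyl's inequality gives $\lambda_{\min}(\Omega_x)\gtrsim 1$. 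This is exactly why the strip width was chosen as $1/k^2$ rather than $1/k$.

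\emph{Main obstacle.} I expect the main obstacle to be the uniformity of the lower bound for $Q_x$ near the boundary of a possibly irregular support. There the rescaled density vanishes on part of the cube, and one must ensure that the remaining set is not degenerate for the polynomial basis. I would first try the Remez/compactness argument over measurable sets of measure at least $c_0/2$. If that fails for general sets, I would restrict to the cube-union supports of Appendix~\ref{subsec:density_and_construction}, where the effective set contains a subcube of fixed side length and positivity of $\int (a^T\rho)^2$ over that subcube is immediate.
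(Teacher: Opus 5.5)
Your proposal is correct. The $\Omega_x$ part is the paper's argument: you subtract the boundary-strip contribution, bound it by $\sup_v\|b_k(v)\|^2\lesssim k$ times a strip volume of order $k^{-2}$, and combine this with Condition~3. The routes differ in the lower bound for $Q_x$.

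The paper works directly with $F^{*,x}$ and uses only two facts: its density is at most $C_0$, and its mass is at least $c_0$. It restricts to a fixed inner cube $\mathcal C_{1-\varepsilon}(1/2)$ on which $K_k=1$, and applies the Carbery--Wright anti-concentration bound $|\{v:(a^T\rho(v))^2<t\}|\le Ct^{1/(2q)}$. This shows $(a^T\rho)^2\ge t$ on a set of $F^{*,x}$-mass at least $c_0/4$. The resulting constant is explicit, and the density lower bound is never used.

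You instead use the density lower bound, and implicitly the upper bound to turn the local-mass condition into $|E_x|\ge c_0$. This reduces the problem to Lebesgue measure on an arbitrary set of measure at least $c_0/2$, which you then handle with Remez or compactness. That works, but the passage from the sup-norm Remez inequality to the $L_2(E)$ bound is stated too loosely.

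The clean version applies Remez to the sublevel set $G_\delta=\{v:|a^T\rho(v)|<\delta\|a^T\rho\|_\infty\}$. For a small fixed $\delta$ this forces $|G_\delta|<c_0/4$, so $\int_E(a^T\rho)^2\ge\delta^2\|a^T\rho\|_\infty^2c_0/4$, with $\|a^T\rho\|_\infty\ge\|a^T\rho\|_{L_2}=1$. This is exactly the anti-concentration fact the paper imports from Carbery--Wright.

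Alternatively, your compactness route becomes rigorous once you relax the indicator of $E$ to functions $g$ with $0\le g\le 1$ and $\int g\ge c_0/2$. This set is weak-$*$ compact, and $(a,g)\mapsto\int g(a^T\rho)^2$ is jointly continuous. The minimum is positive because a nonzero polynomial vanishes only on a null set.

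Either way, the argument covers arbitrary measurable effective supports, including disconnected ones. Your fallback to cube-union supports is therefore unnecessary.
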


In particular, a direct uniform analogue of the sufficient conditions in Propositions S5 and S6 of \cite{kennedy2024minimax} would require the covariate density to be uniformly bounded above and away from zero throughout $\mathcal C_h(x)$. Under the stated basis requirements, this stronger density condition implies Conditions 2 and 3. Our formulation additionally permits holes and disconnected support within local windows, such as the separated-cube supports used in the lower-bound construction. In such settings, the basis may be adapted to the geometry of the covariate support. Further discussion and explicit basis constructions are provided in the next subsection.

\begin{proof}
Let $a\in\mathbb R^q$ be a unit vector. Under the bounded density assumption and orthonormality of $\rho$,
    \[
    \sup_{x\in\mathcal X}a^TQ_xa
    \lesssim \int_{[0,1]^d}(a^T\rho(v))^2\,dv
    =\|a\|^2.
    \]
Let $C_0$ bound the density of $F^{*,x}$ from above and $c_0>0$ bound its total mass $\int_{[0,1]^d}dF^{*,x}(v)$ from below, uniformly in $x$. By orthonormality of $\rho$ and Theorem~8 of \cite{carbery2001distributional}, uniformly over $\|a\|=1$,
    \[
    \int_{[0,1]^d}\mathbbm{1}\{(a^T\rho(v))^2<t\}\,dv
    \leq Ct^{1/(2q)}.
    \]
Choose a sufficiently small fixed $\varepsilon>0$ such that
    \[
    \int_{\mathcal C_{1-\varepsilon}(1/2)}dF^{*,x}(v)
    \geq c_0-C_0\{1-(1-\varepsilon)^d\}\geq c_0/2.
    \]
By Condition~1 and the construction of $K_k$, for sufficiently large $k$,
    \begin{align*}
    a^TQ_xa
    &\gtrsim
    \int_{\mathcal C_{1-\varepsilon}(1/2)}
    (a^T\rho(v))^2\,dF^{*,x}(v)\\
    &\geq t\int_{\mathcal C_{1-\varepsilon}(1/2)}
    \mathbbm{1}\{(a^T\rho(v))^2\geq t\}\,dF^{*,x}(v)\\
    &\geq t\{c_0/2-C_0Ct^{1/(2q)}\}
    \geq tc_0/4>0,
    \end{align*}
where the last inequality follows by choosing $t=(c_0/(4C_0C))^{2q}$. The bound is uniform in $x$ and $\|a\|=1$, so $\inf_{x\in\mathcal X}\lambda_{\min}(Q_x)\gtrsim1$.

For $\Omega_x$, let $\tilde a\in\mathbb R^k$ be a unit vector. Then
    \begin{align*}
    \sup_{x\in\mathcal X}\tilde a^T\Omega_x\tilde a
    &=\sup_{x\in\mathcal X}
    \int_{[0,1]^d}K_k(v)(\tilde a^Tb_k(v))^2\,dF^{*,x}(v)\\
    &\lesssim \int_{[0,1]^d}(\tilde a^Tb_k(v))^2\,dv
    \lesssim \|\tilde a\|^2,
    \end{align*}
since the eigenvalues of $\int_{[0,1]^d}b_k(v)b_k(v)^T\,dv$ are bounded above. For the minimum eigenvalue, by the construction of $K_k$, Condition~3, and the bounded density assumption,
    \begin{align*}
    \tilde a^T\Omega_x\tilde a
    &\geq
    \int_{\mathcal C_{1-1/k^2}(1/2)}
    (\tilde a^Tb_k(v))^2\,dF^{*,x}(v)\\
    &=
    \int_{[0,1]^d}(\tilde a^Tb_k(v))^2\,dF^{*,x}(v)
    -
    \int_{[0,1]^d\setminus\mathcal C_{1-1/k^2}(1/2)}
    (\tilde a^Tb_k(v))^2\,dF^{*,x}(v)\\
    &\gtrsim \|\tilde a\|^2
    - C_0 \sup_{v\in[0,1]^d}\|b_k(v)\|^2\|\tilde a\|^2
    \{1-(1-1/k^2)^d\}\\
    &\gtrsim (1-O(1/k))\|\tilde a\|^2
    \gtrsim \|\tilde a\|^2
    \end{align*}
for sufficiently large $k$. Hence the eigenvalues of $\Omega_x$ are uniformly bounded above and away from zero.
\end{proof}

The eigenvalues of $\hat{\Omega}_x$ are uniformly bounded above and away from zero provided those of $\Omega_x$ are and the density ratios $d\hat F/dF$ and $dF/d\hat F$ are uniformly bounded. This is established in the following Lemma.

\begin{lemma} \label{lemma::eigen-omegahat}
Assume:
    \begin{itemize}
        \item[1.] The eigenvalues of $\Omega_x$ are bounded above and below away from zero uniformly for $x \in \mathcal{X}$
        \item[2.] $\|d\hat{F}/dF\|_{L_\infty(\mathcal{X})}$ and $\|d{F}/d\hat{F}\|_{L_\infty(\mathcal{X})}$ are bounded above.
    \end{itemize}
Then the eigenvalues of $\hat{\Omega}_x$ are bounded above and away from zero uniformly for $x \in \mathcal{X}$
\end{lemma}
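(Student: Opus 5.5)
The plan is to compare quadratic forms directly. Fix $x\in\mathcal X$ and a unit vector $\tilde a\in\mathbb R^k$. By definition,
\[
\tilde a^T\hat\Omega_x\tilde a=\int_{[0,1]^d}K_k(v)\bigl(\tilde a^Tb_k(v)\bigr)^2\,d\hat F^{*,x}(v),
\qquad
\tilde a^T\Omega_x\tilde a=\int_{[0,1]^d}K_k(v)\bigl(\tilde a^Tb_k(v)\bigr)^2\,dF^{*,x}(v).
\]
The integrand $K_k(v)(\tilde a^Tb_k(v))^2$ is nonnegative. Hence everything reduces to comparing the measures $\hat F^{*,x}$ and $F^{*,x}$ on $[0,1]^d$.

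The first step is to transfer the density-ratio bounds to the rescaled measures. The measure $F^{*,x}$ is the pullback of $F$ restricted to $\mathcal C_h(x)$ under the affine map $v\mapsto x+h(v-1/2)$, and $\hat F^{*,x}$ is defined in the same way from $\hat F$. Both measures therefore carry the same Jacobian factor $h^d$, which cancels in the Radon--Nikodym derivative. So
\[
\frac{d\hat F^{*,x}}{dF^{*,x}}(v)=\frac{d\hat F}{dF}\bigl(x+h(v-1/2)\bigr),
\]
and similarly for the reciprocal. Condition 2 then gives, uniformly in $x$ and $v$, the bounds $d\hat F^{*,x}/dF^{*,x}\le C_1$ and $dF^{*,x}/d\hat F^{*,x}\le C_2$, for constants $C_1,C_2$ not depending on $x$.

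The second step is the sandwich. Since the integrand is nonnegative,
\[
C_2^{-1}\,\tilde a^T\Omega_x\tilde a\;\le\;\tilde a^T\hat\Omega_x\tilde a\;\le\;C_1\,\tilde a^T\Omega_x\tilde a .
\]
Taking the infimum and supremum over unit $\tilde a$ and applying Condition 1 yields
\[
C_2^{-1}\inf_x\lambda_{\min}(\Omega_x)\le\lambda_{\min}(\hat\Omega_x)\le\lambda_{\max}(\hat\Omega_x)\le C_1\sup_x\lambda_{\max}(\Omega_x),
\]
uniformly in $x$. This proves the lemma.

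The only delicate point is the first step. Condition 2 is stated as an $L_\infty(\mathcal X)$ bound, whereas the rescaled integrals range over $\mathcal C_h(x)$, which may extend beyond $\mathcal X$. The resolution is that $F$ puts no mass outside $\mathcal X$. Mutual absolute continuity, which is implicit in both ratios being finite, means $\hat F$ also puts no mass there. Both integrals therefore only involve points of $\mathcal X$, where the ratio bounds apply. The comparison also requires $\hat F\ll F$ and $F\ll\hat F$, which is likewise implicit in Condition 2.
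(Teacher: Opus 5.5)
Your proposal is correct and takes the same approach as the paper. The density-ratio bounds give the uniform Loewner sandwich $c\,\Omega_x\preceq\hat\Omega_x\preceq C\,\Omega_x$, from which the eigenvalue bounds follow by Condition~1; the paper defers the details to Lemma~S2 of \cite{kennedy2024minimax}, and you supply them by transferring the ratio bounds to $F^{*,x}$ and $\hat F^{*,x}$ and comparing quadratic forms with a nonnegative integrand.
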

\begin{proof}
The density-ratio assumptions imply that, uniformly over $x\in\mathcal X$,
\[
c\,\Omega_x
\preceq
\hat\Omega_x
\preceq
C\,\Omega_x
\]
for some constants $0<c<C<\infty$. The result therefore follows from the assumed uniform eigenvalue bounds for $\Omega_x$. See Lemma~S2 of \cite{kennedy2024minimax} for a detailed argument.
\end{proof}
\subsection{Covariate density and basis constructions}
\label{subsec:density_and_construction}

We next discuss sufficient conditions on the covariate density and support, together with basis constructions satisfying the eigenvalue conditions in Proposition~\ref{prop:eigen} and the basis requirements \eqref{eq::b-L2}--\eqref{eq::b-row-sum}.

\begin{assumption}\label{assump::density}
For the relevant choices of $h$ and $k$ considered in the paper, assume that the following statements hold uniformly over $x\in\mathcal X$:
\begin{itemize}
\item[1.]
The covariate density is bounded above and away from zero on $\mathcal X$, and $\int \mathbbm{1}\{X\in\mathcal C_h(x)\}\,dF(X) \asymp h^d$.

\item[2.]
The support $\mathcal X$ is a union of disjoint, axis-aligned cubes with uniformly comparable volumes. Let $M_x$ denote the number of these cubes intersecting $\mathcal C_h(x)$, and assume $M_x\lesssim k$ uniformly over $x\in\mathcal X$.

\item[3.]
For some fixed constant $c_0>0$, whenever the support $\mathcal{X}$ consists of at least two cubes, the distance between any two distinct support cubes is at least $c_0h/k^{1/d}$.
\end{itemize}
\end{assumption}

In particular, Assumption~\ref{assump::density} allows the covariate support to be either a single cube or a disconnected union of cubes, encompassing both the uniform distribution on $[0,1]^d$ and the covariate density class used in the lower-bound construction. Below, we first consider the simpler full-local-support setting, where $\mathcal C_h(x)$ is contained completely within a support cube, using a tensor-product normalized B-spline basis. We then extend the construction to disconnected local supports using localized spline bases adapted to the support geometry.

\subsubsection{Full local support}
\label{subsec:full-support}
We first consider the case where $\mathcal C_h(x)$ is entirely contained in the covariate support, such that after mapping $\mathcal C_h(x)$ to $[0,1]^d$, a standard tensor-product normalized B-spline basis can be directly used. We explicitly define the tensor-product normalized B-spline basis and verify the required conditions below.
\paragraph{Verification of conditions (\ref{eq::b-L2}-\ref{eq::b-support})}
Fix the spline order $r=\max\{2,\lfloor\alpha\rfloor+\lfloor\beta\rfloor+ \lfloor\gamma\rfloor+1\}$, which ensures continuity and is sufficient for the approximation arguments. For $k$ sufficiently large, partition each coordinate interval $[0,1]$ into $m = \lfloor k^{1/d} \rfloor-r+1$ equal subintervals. For each coordinate $\ell = 1, \cdots, d$, let
\[
\tilde{B}(v_\ell) = (\tilde{B}_1(v_\ell), \cdots, \tilde{B}_{m+r-1}(v_\ell))^T
\]
denote the univariate B-spline basis of order $r$ associated with the equally spaced knots
\[
0, \frac{1}{m}, \frac{2}{m}, \cdots, 1,
\]
constructed via the Cox-de Boor recursion. For a multi-index $ \boldsymbol{\zeta}  = (\zeta_1, \cdots, \zeta_d)$ with $1 \leq \zeta_{\ell} \leq m+r-1$ for each $\ell$, define
\[
\tilde{b}_{\boldsymbol{\zeta}}(v) = \prod_{\ell=1}^d \tilde{B}_{\zeta_\ell} (v_{\ell}), \quad v = (v_1, \cdots, v_d) \in [0,1]^d.
\]
Enumerating all multi-indices yields the tensor-product spline basis $\{\tilde b_j=\tilde b_{\boldsymbol{\zeta}_j}\}_{j=1}^{(m+r-1)^d}$, whose dimension is $(m+r-1)^d=\lfloor k^{1/d}\rfloor^d\asymp k$. Each basis function $\tilde b_j$ is a piecewise polynomial of degree $r-1$ in each coordinate, with support of volume of order $k^{-1}$. Moreover, at any fixed point $v\in[0,1]^d$, at most $r^d$ basis functions are nonzero (See \cite{de1978practical} for more information about splines).

Define the normalized spline basis by
\[
b_k(v) = \sqrt{k}(\tilde{b}_1(v), \cdots , \tilde{b}_k(v))^T.
\]
By the univariate stability bound in Theorem 9.27 of \cite{schumaker2007spline}, $G=\int b_k(v) b_k(v)^T dv$ has eigenvalues bounded above and away from zero. In particular, for each $j= 1, \cdots, k$,
\[
\int_{[0,1]^d} [b_k]_j(v)^2 dv
= G_{j,j} \lesssim 1.
\]
Moreover
\[
\int_{[0,1]^d}|[b_k]_j(v)|\,dv
=\sqrt{k}\int_{[0,1]^d}|\tilde b_j(v)|\,dv\lesssim k^{-1/2},
\]
since each $\tilde{b}_j(v)$ is uniformly bounded above and has support of volume of order $k^{-1}$. Together with bounded overlap, these properties establish conditions \eqref{eq::b-sup} and \eqref{eq::b-support}. Moreover, standard tensor-product spline approximation results imply condition~\eqref{eq::b-L2} at all smoothness levels used in our proofs, which are covered by the chosen spline order; see, for example, Section~3.1 of \cite{belloni2015some}.

Finally, the piecewise derivatives satisfy $\|\nabla_v\tilde b_j\|_{L_\infty([0,1]^d)} \lesssim k^{1/d}$. Continuity and bounded overlap therefore imply
\[
\|b_k(u)-b_k(v)\|
\lesssim k^{1/2+1/d}\|u-v\|,
\qquad u,v\in[0,1]^d,
\]
establishing condition \eqref{eq::b-lip} with $L=1/2+1/d$.

\paragraph{Verification of conditions in Proposition~\ref{prop:eigen}}
In the full-local-support setting, the uniform density bounds above and away from zero imply
\[
\frac{dF^{*,x}}{dv}(v)\asymp1,
\qquad v\in[0,1]^d.
\]
Together with the previously established eigenvalue bounds for the Gram matrix, this verifies the requirements in Condition 3 of Proposition~\ref{prop:eigen}. Thus, the eigenvalues of $\Omega_x$ and $Q_x$ are bounded above and below.

\paragraph{Verification of condition (\ref{eq::b-row-sum})} \label{app::row-sum}
For the constructed tensor-product B-spline basis,
\begin{align*}
[\Omega_x]_{i,j} = & k  \int_{[0,1]^d}  \tilde{b}_{\boldsymbol{\zeta}_i}(v) \tilde{b}_{\boldsymbol{\zeta}_j}(v)  K_k(v) dF^{*,x}(v) \\
= &
 k  \int_{[0,1]^d}  \prod_{\ell=1}^d \tilde{B}_{[\boldsymbol{\zeta}_i]_\ell}(v_\ell) \tilde{B}_{[\boldsymbol{\zeta}_j]_\ell}(v_\ell)  K_k(v) dF^{*,x}(v).
\end{align*}
Since each univariate B-spline has support on at most $r$ consecutive knot intervals,
\[
\tilde{B}_{[\boldsymbol{\zeta}_i]_\ell}(v_\ell)
\tilde{B}_{[\boldsymbol{\zeta}_j]_\ell}(v_\ell)=0
\quad\text{whenever}\quad
|[\boldsymbol{\zeta}_i]_\ell-[\boldsymbol{\zeta}_j]_\ell|\geq r.
\]
Consequently, $[\Omega_x]_{i,j} = 0$ whenever $\| \boldsymbol{\zeta}_i -\boldsymbol{\zeta}_j \|_{\infty} \geq r$. To proceed, we use the following result.
\begin{lemma}[Proposition 5.1 of \cite{demko1984decay}]
\label{lemma:band}
Let $A$ be a positive definite matrix whose eigenvalues lie in $[a,b]$, where $0<a<b$. Define
\[
S_n(A)
=
\bigcup_{\ell=0}^n
\{(i,j):[A^\ell]_{i,j}\ne0\},
\qquad
D_n(A)=(S\times S)\setminus S_n(A),
\]
where $S$ is the index set of $A$. Then
\[
\sup\bigl\{
|[A^{-1}]_{i,j}|:(i,j)\in D_n(A)
\bigr\}
\le C_0q^{n+1}.
\]
where $\kappa= b/a$, $q={(\sqrt{\kappa}-1)}/{(\sqrt{\kappa}+1)}$ and $C_0={(1+\sqrt{\kappa})^2}/{2a\kappa}$.
\end{lemma}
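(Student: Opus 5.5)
The plan is the standard polynomial-approximation argument. First I use the sparsity structure. For any polynomial $p(t)=\sum_{\ell=0}^n c_\ell t^\ell$ of degree at most $n$, the matrix $p(A)=\sum_{\ell=0}^n c_\ell A^\ell$ satisfies $[p(A)]_{i,j}=0$ for every $(i,j)\in D_n(A)$. This holds because each $[A^\ell]_{i,j}$ with $\ell\le n$ vanishes off $S_n(A)$ by definition; for $\ell=0$ this uses the convention $A^0=I$, so diagonal pairs always lie in $S_n(A)$. Hence for $(i,j)\in D_n(A)$,
\[
|[A^{-1}]_{i,j}| = |[A^{-1}-p(A)]_{i,j}| \le \|A^{-1}-p(A)\|.
\]
Since $A$ is symmetric positive definite with spectrum in $[a,b]$, the spectral theorem gives
\[
\|A^{-1}-p(A)\| = \max_{\lambda\in\sigma(A)}|1/\lambda - p(\lambda)| \le \sup_{t\in[a,b]}|1/t-p(t)|.
\]
Taking the infimum over $p$ reduces the lemma to the scalar bound
\[
E_n := \inf_{\deg p\le n}\sup_{t\in[a,b]}|1/t-p(t)| \le C_0 q^{n+1}.
\]
Note that $C_0=(1+\sqrt\kappa)^2/(2a\kappa)=(1+\sqrt\kappa)^2/(2b)$.

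Second, I bound $E_n$ with Chebyshev polynomials. I map $[a,b]$ affinely to $[-1,1]$ by $t=\tfrac{b+a}{2}+\tfrac{b-a}{2}u$. This gives $1/t = \tfrac{2}{b-a}\cdot\tfrac{1}{u-u_0}$ with $u_0=-\tfrac{b+a}{b-a}$, so $|u_0|=\tfrac{\kappa+1}{\kappa-1}>1$. I write $u_0=-\tfrac12(\rho+\rho^{-1})$. Solving gives $\rho=|u_0|+\sqrt{u_0^2-1}=\tfrac{\sqrt\kappa+1}{\sqrt\kappa-1}=q^{-1}$. The generating function for Chebyshev polynomials then yields the expansion
\[
\frac{1}{u-u_0} = \frac{2}{\sqrt{u_0^2-1}}\Bigl(\tfrac12 + \sum_{m\ge1}(-q)^m T_m(u)\Bigr), \qquad u\in[-1,1],
\]
with the sign pattern to be verified. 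Truncating at degree $n$ and using $|T_m|\le1$ bounds the error by $\tfrac{2}{\sqrt{u_0^2-1}}\cdot\tfrac{q^{n+1}}{1-q}$. Rewriting $\sqrt{u_0^2-1}=\tfrac{2\sqrt\kappa}{\kappa-1}$ and $1-q=\tfrac{2}{\sqrt\kappa+1}$ in terms of $\kappa$, and multiplying by $\tfrac{2}{b-a}=\tfrac{2}{a(\kappa-1)}$, the error simplifies to
\[
\frac{(1+\sqrt\kappa)}{a\sqrt\kappa}\,q^{n+1}.
\]
This is within a factor $\tfrac{2\sqrt\kappa}{1+\sqrt\kappa}\le 2$ of $C_0 q^{n+1}$.

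The main obstacle is the exact constant $C_0$ rather than a constant-multiple of it. Naive truncation loses up to a factor $2$. To recover $C_0$ exactly, I would invoke Chebyshev's classical closed-form solution for the best uniform approximation of $1/(u-u_0)$ on $[-1,1]$ (see Achieser's monograph). It has an equioscillating extremal with error exactly $\tfrac{2}{b-a}\cdot\tfrac{(1+\rho^{-1})^2\rho^{-n}}{(\rho-\rho^{-1})\cdots}$, which simplifies to $\tfrac{(1+\sqrt\kappa)^2}{2b}q^{n+1}$. Alternatively, I would cite Proposition~5.1 of \cite{demko1984decay} directly, since the lemma is taken from there. For its use in verifying \eqref{eq::b-row-sum}, any constant depending only on $\kappa$ and $a$ suffices, so the truncation bound already gives everything needed downstream.
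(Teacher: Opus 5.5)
Your argument is correct and follows the same route as \cite{demko1984decay}, which the paper cites without proof: a degree-$n$ polynomial in $A$ vanishes on $D_n(A)$, you pass to the spectral norm, and you reduce to the best uniform polynomial approximation of $1/t$ on $[a,b]$. The one loose end is your garbled display for the exact error. The clean classical form is $E_n=\frac{2}{b-a}\cdot\frac{q^n}{\alpha^2-1}$ with $\alpha=\frac{\kappa+1}{\kappa-1}$ (so $\alpha-\sqrt{\alpha^2-1}=q$), which equals $\frac{(\kappa-1)q^n}{2a\kappa}=C_0q^{n+1}$ exactly. Your truncated Chebyshev series is off only by the factor $\frac{2\sqrt\kappa}{1+\sqrt\kappa}<2$, which is harmless for the paper's row-sum application.
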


For $i\ne j$, if $[(\Omega_x)^n]_{i,j}=0$ for every $n\ge1$, then Lemma~\ref{lemma:band} gives $[\Omega_x^{-1}]_{i,j}=0$ with $n\to\infty$. Otherwise, define
\[
n_0(i,j)
=
\min\{n\ge1:[(\Omega_x)^n]_{i,j}\ne0\}
=
\max\{n\ge0:(i,j)\in D_n(\Omega_x)\}+1.
\]
By definition, there exist indices
\[
i=i_0,i_1,\ldots,i_{n_0(i,j)}=j
\]
such that $[\Omega_x]_{i_{t-1},i_t}\ne0$ for $t=1,\ldots,n_0(i,j)$. Since $[\Omega_x]_{i,j}=0$ whenever $\|\boldsymbol{\zeta}_i-\boldsymbol{\zeta}_j\|_\infty\ge r$, the triangle inequality gives
\[
\|\boldsymbol{\zeta}_i-\boldsymbol{\zeta}_j\|_\infty
\le
\sum_{t=1}^{n_0(i,j)}
\|\boldsymbol{\zeta}_{i_t}
  -\boldsymbol{\zeta}_{i_{t-1}}\|_\infty
<
n_0(i,j)r.
\]
Hence
\[
n_0(i,j)>
\frac{\|\boldsymbol{\zeta}_i-\boldsymbol{\zeta}_j\|_\infty}{r}.
\]
Applying Lemma~\ref{lemma:band} with $n=n_0(i,j)-1$ yields
\[
|[\Omega_x^{-1}]_{i,j}|
\le C_0q^{n_0(i,j)}
\le
C_0\tilde q^{\|\boldsymbol{\zeta}_i-\boldsymbol{\zeta}_j\|_\infty},
\qquad
\tilde q=q^{1/r}\in(0,1),
\]
and the bound holds for all $i\ne j$. Moreover, $|[\Omega_x^{-1}]_{i,i}|\le a^{-1}$, where $a>0$ is the uniform lower eigenvalue bound. Therefore,
\begin{align*}
\|\Omega_x^{-1}\|_\infty
&=
\max_{1\le i\le k}
\sum_{j=1}^k|[\Omega_x^{-1}]_{i,j}|\\
&\le
a^{-1}
+
C_0\max_{1\le i\le k}
\sum_{j\ne i}
\tilde q^{\|\boldsymbol{\zeta}_i-\boldsymbol{\zeta}_j\|_\infty}\\
&=
a^{-1}
+
C_0\max_{1\le i\le k}
\sum_{t=1}^{\infty}
\#\left\{
j:\|\boldsymbol{\zeta}_i-\boldsymbol{\zeta}_j\|_\infty=t
\right\}\tilde q^t\\
&\lesssim
a^{-1}+C_0\sum_{t=1}^{\infty}t^{d-1}\tilde q^t
\lesssim1,
\end{align*}
where, for $t\ge1$,
\[
\#\left\{
j:\|\boldsymbol{\zeta}_i-\boldsymbol{\zeta}_j\|_\infty=t
\right\}
\le
\#\{\boldsymbol{\zeta}\in\mathbb Z^d:
       \|\boldsymbol{\zeta}\|_\infty=t\}
=
(2t+1)^d-(2t-1)^d
\lesssim t^{d-1}.
\]
The series converges because $\tilde q^t$ decays exponentially. The same argument applies to $\hat\Omega_x$ under its corresponding uniform eigenvalue bounds and yields
\[
\|\hat\Omega_x^{-1}\|_\infty\lesssim1
\]
since it has the same sparsity property. We have therefore verified all the required properties in the full-local-support setting.

\subsubsection{Disconnected local support}

We first consider support cubes with side length $O(h)$, including $o(h)$. Fix an enumeration of the support cubes,
\[
\mathcal X=\bigcup_j A_j
=\bigcup_j\mathcal C_{\ell_j}(m_j),
\]
where $m_j$ and $\ell_j$ denote their centers and side lengths. For $x\in\mathcal X$, let $i(x)$ be the index of the cube containing $x$, and define
\[
\mathcal J_x
=
\{j:m_j\in\mathcal C_h(m_{i(x)})\}
=
\{j_1,\ldots,j_{M_x}\},
\qquad j_1<\cdots<j_{M_x}.
\]
Here $M_x=|\mathcal J_x|\lesssim k$ may depend on $x$. The selected collection remains unchanged as $x$ varies within the same support cube, including its boundary, thereby avoiding truncation of the selected cubes near boundary points. The local mass and comparable-volume conditions imply
\[
\sum_{\nu=1}^{M_x}|A_{j_\nu}|\asymp h^d,
\qquad
\ell_{j_\nu}\asymp\frac{h}{M_x^{1/d}},
\quad \nu=1,\ldots,M_x.
\]
Below, we construct both the kernel and the second-order basis adapted to such disconnected support.
\paragraph{Construction of the spline blocks and kernel}
For each fixed $x$, use the same fixed spline order $r=\max\{2,\lfloor\alpha\rfloor+\lfloor\beta\rfloor +\lfloor\gamma\rfloor+1\}$, and set
\[
m=\left\lceil(k/M_x)^{1/d}\right\rceil,
\qquad
J=(m+r-1)^d\asymp k/M_x.
\]
Let $\widetilde B_J$ denote the unnormalized tensor-product B-spline basis on $[0,1]^d$, constructed by the Cox--de Boor recursion with $m$ equal knot intervals in each coordinate. Each component in $\tilde{B}_J$ takes values in $[0,1]$, and at most $r^d$ components are nonzero at any point. Define the normalized basis by
\[
B_J(u)=\sqrt J\,\widetilde B_J(u).
\]
For each selected cube $A_j$, $j\in\mathcal J_x$, define the corresponding $J$-dimensional block on the covariate support by
\[
b_j(X)=
\mathbbm{1}\{X\in A_j\}
\left(\frac{h}{\ell_j}\right)^{d/2}
B_J\!\left(\frac{X-m_j}{\ell_j}+\frac12\right).
\]
Concatenating these blocks in the fixed order $j_1<\cdots<j_{M_x}$, define
\[
b_{hk}^x(X)
=
\bigl(
b_{j_1}(X)^T,\ldots,b_{j_{M_x}}(X)^T
\bigr)^T,
\]
where coordinates $(\nu-1)J+1,\ldots,\nu J$ correspond to cube $A_{j_\nu}$, and the total dimension is $M_xJ\asymp k$. Directly define the localized kernel by
\[
K_{hk}^x(X)
=
h^{-d}\mathbbm{1}
\left\{X\in\bigcup_{\nu=1}^{M_x}A_{j_\nu}\right\}.
\]
These definitions specify the adapted localized basis and kernel on the covariate support $\mathcal X$. In addition, the projection onto the localized basis with respect to the weighted measure $K_{hk}^x\,dF$ is defined by
\[
\Pi_xg(X)=b_{hk}^x(X)^T\Omega_x^{-1}
\int_{\mathcal X}b_{hk}^x(u)g(u)K_{hk}^x(u)\,dF(u),
\]
with $\hat\Pi_x$ obtained by replacing $\Omega_x^{-1}$ with $\hat\Omega_x^{-1}$.

\paragraph{Verification of the basis and Gram matrix conditions.}
We directly verify the localized counterparts of the basis conditions. Under an $h$-scale change of variables, the pointwise vector norm bounds remain unchanged, whereas the integrals of each coordinate's absolute value and square acquire a factor $h^d$. Accordingly, we normalize these Lebesgue integrals by $h^{-d}$ below. All bounds hold uniformly in $x$.

The normalization factor satisfies $(h/\ell_j)^{d/2}\asymp\sqrt{M_x}$. Since $\sup_u\|B_J(u)\|,\sup_u\|B_J(u)\|_1\lesssim\sqrt J$ and at most one block $b_j(X)$ is nonzero at any given $X\in\mathcal X$,
\[
\sup_{X\in\mathcal X}\|b_{hk}^x(X)\|, \
\sup_{X\in\mathcal X}\|b_{hk}^x(X)\|_1
\lesssim\sqrt{M_xJ}\asymp\sqrt k,
\]
verifying the localized counterpart of condition~\eqref{eq::b-sup}.

Each coordinate after normalization has support volume $O(\ell_{j_\nu}^d/J)=O(h^d/k)$, and at most $r^d$ coordinates are nonzero at any point. Consequently, for every coordinate $j$,
\[
h^{-d}\int_{\mathcal X}[b_{hk}^x]_j(X)^2\,dX
\lesssim1,
\qquad
h^{-d}\int_{\mathcal X}|[b_{hk}^x]_j(X)|\,dX
\lesssim k^{-1/2}.
\]
These establish the localized integral and bounded-overlap conditions corresponding to~\eqref{eq::b-support}.

Within each selected cube $A_j$, write $u=(X-m_j)/\ell_j+1/2$. The fixed-order spline derivative bounds and bounded overlap give $\|\nabla_u B_J(u)\|_{\mathrm{op}}\lesssim\sqrt J\,m$. Thus, wherever the derivatives exist, the chain rule yields
\[
\|\nabla_X b_j(X)\|_{\mathrm{op}}
\lesssim
\left(\frac h{\ell_j}\right)^{d/2}
\frac{\sqrt J\,m}{\ell_j}
\asymp
\sqrt{M_xJ}\,
\frac{(k/M_x)^{1/d}}{h/M_x^{1/d}}
\asymp
\frac{k^{1/2+1/d}}h.
\]
Since the spline is continuous, integrating this bound along the line segment joining $X,Y\in A_j$ gives
\[
\|b_j(X)-b_j(Y)\|
\lesssim
\frac{k^{1/2+1/d}}h\|X-Y\|,
\]
verifying the counterpart of condition~\eqref{eq::b-lip}.

By construction, the normalized Lebesgue Gram matrix
\[
h^{-d}\int_{\mathcal X}
b_{hk}^x(X)b_{hk}^x(X)^T\,dX
\]
is block diagonal. For each selected cube, a change of variables gives
\[
h^{-d}\int_{A_{j_\nu}}
b_{j_\nu}(X)b_{j_\nu}(X)^T\,dX
=
\int_{[0,1]^d}B_J(u)B_J(u)^T\,du.
\]
The spline Gram matrix bounds therefore imply that the eigenvalues of the full normalized Lebesgue Gram matrix are uniformly bounded above and away from zero. For the weighted Gram matrix, distinct blocks have disjoint supports, so
\[
\Omega_x
=
\operatorname{diag}
(\Omega_{x,1},\ldots,\Omega_{x,M_x}),
\]
where
\[
\Omega_{x,\nu}
=
h^{-d}\int_{A_{j_\nu}}
b_{j_\nu}(X)b_{j_\nu}(X)^T dF(X)
=
\int_{[0,1]^d}
B_J(u)B_J(u)^T
f\!\left(m_{j_\nu}
+\ell_{j_\nu}(u-1/2)\right)\,du.
\]
The density and spline Gram matrix bounds imply
\[
\lambda_{\min}(\Omega_{x,\nu})\gtrsim1,
\qquad
\lambda_{\max}(\Omega_{x,\nu})\lesssim1,
\]
uniformly in $x$, $\nu$, and $J$. The same bounds hold for $\Omega_x$, also verifying the localized Gram matrix comparison in Proposition~\ref{prop:eigen}. By the same argument as in Proposition~\ref{prop:eigen}, the eigenvalues of $Q_x$ are uniformly bounded above and away from zero over $x\in\mathcal X$.

Within each block, the spline overlap structure is the same as in the preceding subsection. Applying the argument of Appendix~\ref{app::row-sum} gives
\[
\|\Omega_{x,\nu}^{-1}\|_\infty\lesssim1.
\]
Consequently,
\[
\|\Omega_x^{-1}\|_\infty
=
\max_{1\le\nu\le M_x}
\|\Omega_{x,\nu}^{-1}\|_\infty
\lesssim1.
\]
The same argument applies to $\hat\Omega_x$ under its corresponding uniform eigenvalue bounds, establishing condition~\eqref{eq::b-row-sum}.

We directly verify the localized continuity used in Proposition~\ref{prop::l_inf_2}, for which condition~\eqref{eq::b-lip} was used. If $x,x'$ belong to the same support cube, they have the same selected indices, $M_x=M_{x'}$, the same block dimension $J$, and the same ordering of coordinates. Consequently,
\[
b_{hk}^x(X)=b_{hk}^{x'}(X),
\qquad
K_{hk}^x(X)=K_{hk}^{x'}(X),
\qquad
\hat\Omega_x=\hat\Omega_{x'}.
\]
The corresponding kernel-weighted projection kernels are therefore identical.

If $x,x'$ belong to distinct support cubes, separation condition gives $\|x-x'\|\gtrsim h/k^{1/d}$. Using the sup-norm bounds and $\sup_x\|\hat\Omega_x^{-1}\|\lesssim1$, we obtain
\[
\sup_{X_1,X_2\in\mathcal X}
\left|
K_{hk}^x(X_1)b_{hk}^x(X_1)^T
\hat\Omega_x^{-1}b_{hk}^x(X_2)K_{hk}^x(X_2)
\right|
\lesssim \frac{k}{h^{2d}}.
\]
Combining the two cases therefore yields
\begin{align}
\label{eq::b-local-lip}
&\sup_{X_1,X_2\in\mathcal X}\Bigl|
K_{hk}^x(X_1)b_{hk}^x(X_1)^T
\hat\Omega_x^{-1}b_{hk}^x(X_2)K_{hk}^x(X_2)
\nonumber\\
&\qquad-
K_{hk}^{x'}(X_1)b_{hk}^{x'}(X_1)^T
\hat\Omega_{x'}^{-1}b_{hk}^{x'}(X_2)K_{hk}^{x'}(X_2)
\Bigr|
\lesssim
\frac{k^{1+1/d}}{h^{2d+1}}\|x-x'\|,
\end{align}
and this scalar comparison remains valid when the local basis dimensions differ. By the same argument,
\[
\sup_{X\in\mathcal X}
|K_{hk}^x(X)-K_{hk}^{x'}(X)|
\lesssim
\frac{k^{1/d}}{h^{d+1}}\|x-x'\|
\lesssim
\frac{k^2}{h^{d+1}}\|x-x'\|.
\]
These bounds provide the localized continuity required in the proof of Proposition~\ref{prop::l_inf_2}. The discretization size $M$ can be chosen accordingly, while remaining polynomial in $n$.

For the approximation condition, the spline mesh on each selected cube is $\ell_{j_\nu}/m\asymp h/k^{1/d}$. By the uniform spline approximation bound discussed in Section~3.1 of \cite{belloni2015some}, for every $t$-H\"older function $g$ with uniformly bounded H\"older norm, at a smoothness level covered by the chosen spline order, there exists a spline $s_{j_\nu}$ satisfying
\[
\sup_{X\in A_{j_\nu}}|g(X)-s_{j_\nu}(X)|
\lesssim (\ell_{j_\nu}/m)^t
\asymp (h/k^{1/d})^t,
\]
uniformly in $x$ and $\nu$. Since the blocks have independent coefficients, these approximants can be represented simultaneously by a single linear combination of $b_{hk}^x$. Together with $\int K_{hk}^x\,dF\lesssim1$, this gives
\[
\inf_{c\in\mathbb R^{M_xJ}}
\left\{
\int_{\mathcal X}
\bigl(g(X)-c^Tb_{hk}^x(X)\bigr)^2
K_{hk}^x(X)\,dF(X)
\right\}^{1/2}
\lesssim (h/k^{1/d})^t,
\]
and the weighted orthogonal projection minimizes this error. This establishes the localized counterpart of condition~\eqref{eq::b-L2}.

Finally, consider a support cube $A_j=\mathcal C_{\ell_j}(m_j)$ with $\ell_j/h\to\infty$. The full-local-support construction applies directly whenever $\mathcal C_h(x)\subseteq A_j$. Near the boundary, let $\tilde x$ be the coordinatewise projection of $x$ onto the concentric cube $\mathcal C_{\ell_j-h}(m_j)$. Then
\[
\mathcal C_h(\tilde x)\subseteq A_j,
\qquad
\|\tilde x-x\|\lesssim h,
\qquad
\|\tilde x-\tilde x'\|\leq\|x-x'\|,
\quad x,x'\in A_j.
\]
Apply the full-local-support construction on $\mathcal C_h(\tilde x)$, centering both the second-order spline basis and the kernel at $\tilde x$, while keeping $\rho_h^x$ centered at $x$ and evaluating the fitted polynomial at $x$ as in \eqref{eq::estimator}. Since $\|\tilde{x}-x\| \lesssim h$, the required approximation, eigenvalue, and localized continuity bounds follow by arguments similar to those above.

\subsection{Stability of $\hat Q_x^{-1}$}
\label{app::Q-stability}

Let $\epsilon>0$ be a fixed numerical threshold, and assume
\[
\inf_{x\in\mathcal X}\lambda_{\min}(Q_x)\geq2\epsilon.
\]
Throughout, we condition on the training sample $D^n$. Singular-value perturbation gives
\[
\left\{\inf_{x \in \mathcal{X}} \sigma_{\min}(\hat Q_x)<\epsilon\right\}
\subseteq
\left\{\sup_{x \in \mathcal{X}}\|\hat Q_x-Q_x\|>\epsilon\right\},
\]
where we use the smallest singular value of $\hat Q_x$ because it need not be symmetric. We therefore bound the probability of the latter event.

Following the conditional-mean calculation for $\hat Q$ in the proof of Proposition~5 of \cite{kennedy2024minimax}, {under the remaining assumptions of Theorem~\ref{thm::upper},} we obtain
\begin{align*}
|[\mathbb E(\hat Q_x\mid D^n)-Q_x]_{j,\ell}|
&\lesssim
\|(I-\Pi_x)\{[\rho]_j
(\pi^{*,x}-\hat\pi^{*,x})\}\|_{K\cdot F^{*,x}}
\|(I-\Pi_x)([\rho]_\ell\pi^{*,x})\|_{K\cdot F^{*,x}}\\
&\quad+
\|\hat\pi-\pi\|_{K\cdot F^{*,x}}
\|\hat\Omega_x^{-1}-\Omega_x^{-1}\|\\
&\lesssim
{(h/k^{1/d})^\alpha
+\frac{(h/k^{1/d})^{2s}}{h^\gamma}
+\frac{r_n}{h^\gamma}.}
\end{align*}
Under the stated choices of $h,k$, the ratio $(h/k^{1/d})^{2s}/h^\gamma$ can be made sufficiently small by choosing a sufficiently large fixed multiplier in $k$. Since $(h/k^{1/d})^\alpha\to0$ and $r_n/h^\gamma\to0$, we obtain
\begin{equation}\label{eq::Q-stability-bias}
\sup_x\|\mathbb E(\hat Q_x\mid D^n)-Q_x\|
\leq\epsilon/2
\end{equation}
for all sufficiently large $n$.

For the fluctuation, we follow the concentration and continuity arguments in the proofs of Propositions~\ref{prop::l_inf_1} and~\ref{prop::l_inf_2}, respectively. Write $\hat Q_x=\mathbb U_n(\xi_Q^x)$, where
\begin{align*}
\xi_Q^x(Z_1,Z_2)
={}&\rho_h^x(X_1)K_{hk}^x(X_1)
A_1\{A_1-\hat\pi(X_1)\}\rho_h^x(X_1)^T -\rho_h^x(X_1)K_{hk}^x(X_1)
\{A_1-\hat\pi(X_1)\}\\
&\qquad\times
\{b_{hk}^x(X_1)^T\hat\Omega_x^{-1}b_{hk}^x(X_2)\}
A_2K_{hk}^x(X_2)\rho_h^x(X_2)^T.
\end{align*}
For each entry, the outcome residual in the preceding kernel calculations is replaced by $A[\rho_h^x(X)]_\ell$, which is uniformly bounded on the kernel support. Thus the same bounds in Proposition~\ref{prop::kernel-bound} apply. After symmetrizing in the two observations and applying the Hoeffding decomposition entrywise, we obtain, in the notation of Lemma~\ref{lemma::concentration},
\[
b\lesssim h^{-d},\quad
\sigma^2\lesssim h^{-d},\quad
A\lesssim kh^{-2d},\quad
B^2\lesssim nkh^{-3d},\quad
C^2\lesssim n^2kh^{-2d},\quad
D\lesssim nh^{-d}.
\]
For every fixed $t>0$, Lemma~\ref{lemma::concentration} then gives,
\begin{align}
\nonumber
\Pr\!\left\{
\|\hat Q_x-\mathbb E(\hat Q_x\mid D^n)\|>t
\;\middle|\;D^n
\right\}
& \lesssim
\exp\!\left[
-\frac{1}{C_t}
\min\left\{
\frac{n^2h^{2d}}{k},\,
nh^d,\,
\frac{nh^d}{k^{1/3}},\,
\frac{nh^d}{\sqrt{k}}
\right\}
\right]\\
\label{eq:concen-Q}
&  \lesssim
\exp\!\left(-\frac{nh^d}{C_t\sqrt{k}}\right),
\end{align}
where the last inequality holds for all sufficiently large $n$, since $k=o((nh^d)^2)$ under the stated tuning choices. Choose packing points $x_1,\ldots,x_M$ as in Appendix~\ref{app::decomposition}, so that every $x\in\mathcal X$ lies within distance $O(M^{-1/d})$ of some packing point. Take $M$ polynomial in $n$ satisfying the mesh condition in Proposition~\ref{prop::l_inf_2}. The pointwise kernel bounds in its proof apply to $\xi_Q^x$ and hence to both its empirical average and conditional expectation. Thus,
\begin{align*}
&\sup_{\substack{x,x'\in\mathcal X\\
\|x-x'\|\lesssim M^{-1/d}}}
\left\{
\|\hat Q_x-\hat Q_{x'}\|
+\|\mathbb E(\hat Q_x-\hat Q_{x'}\mid D^n)\|
\right\} \lesssim
\frac{k^{L+4}M^{-1/d}}{h^{3d+1}}
\lesssim\frac{\sqrt{k}}{nh^d}
\leq\frac{\epsilon}{4},
\end{align*}
for sufficiently large $n$. Together with \eqref{eq::Q-stability-bias}, this gives
\begin{align*}
\sup_x\|\hat Q_x-Q_x\|
&\leq
\sup_x\|\hat Q_x-\mathbb E(\hat Q_x\mid D^n)\|
+\frac{\epsilon}{2}\\
&\leq
\max_{1\leq\ell\leq M}
\|\hat Q_{x_\ell}
-\mathbb E(\hat Q_{x_\ell}\mid D^n)\|
+\frac{3\epsilon}{4}.
\end{align*}
Applying the pointwise tail bound in \eqref{eq:concen-Q} with $t=\epsilon/4$ and taking a union bound yields
\begin{align*}
&\Pr\!\left\{
\sup_x\|\hat Q_x-Q_x\|>\epsilon
\;\middle|\;D^n
\right\} \leq
\Pr\!\left\{
\max_{1\leq\ell\leq M}
\|\hat Q_{x_\ell}
-\mathbb E(\hat Q_{x_\ell}\mid D^n)\|>\epsilon/4
\;\middle|\;D^n
\right\} \lesssim
M\exp\!\left(-\frac{nh^d}{C_\epsilon\sqrt{k}}\right).
\end{align*}
The conditional bound is uniform in $D^n$, so integrating over the training sample gives
\begin{align*}
\Pr\!\left\{\inf_x\sigma_{\min}(\hat Q_x)<\epsilon\right\}
&\leq
\Pr\!\left\{\sup_x\|\hat Q_x-Q_x\|>\epsilon\right\} \lesssim
M\exp\!\left(-\frac{nh^d}{C_\epsilon\sqrt{k}}\right).
\end{align*}
Under the stated tuning choices, $nh^d/\sqrt{k}$ grows at least as a positive power of $n$, while $M$ is polynomial in $n$. Thus the last bound is $O(\exp(-c n^\eta))$ for some constants $c,\eta>0$, and hence decays faster than every polynomial rate.

\section{Preliminary results}
\subsection{Maximum inequality from tail bounds} \label{pre::tail}
\begin{lemma}  \label{lemma::maximum}
Let $M\ge2$ and let $X_1, \cdots, X_M \in \mathbb{R}$ be random variables satisfying, for every $i=1,\ldots,M$ and every $t>0$,
    \[
    \Pr(|X_i| \geq t) \lesssim e^{-c\min
    \left(
    \frac{t^2}{C^2}, \frac{t}{D}, \frac{t^{2/3}}{B^{2/3}}, \frac{t^{1/2}}{A^{1/2}}
    \right) ,}
    \]
for some constant $c>0$. Then,
    \[
    \mathbb{E}[\max_{1\leq i \leq M} |X_i|^2 ] \lesssim
    C^2 {\log(M)} + D^2 (\log(M))^2 + B^2(\log M)^{3} +A^2(\log M)^4.
    \]
\end{lemma}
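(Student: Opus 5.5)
We will prove Lemma~\ref{lemma::maximum} by the standard route: integrate the tail of the maximum, using a union bound only above a threshold of order the $\log M$-quantile. Write $Z=\max_{1\le i\le M}|X_i|$ and $\phi(t)=c\min\{t^2/C^2,\,t/D,\,t^{2/3}/B^{2/3},\,t^{1/2}/A^{1/2}\}$. Then $\mathbb E Z^2=\int_0^\infty 2t\Pr(Z\ge t)\,dt$, and the union bound gives
\[
\Pr(Z\ge t)\le \min\{1,\,KMe^{-\phi(t)}\}.
\]
Since $\min(a_1,\dots,a_4)\ge u$ iff each $a_i\ge u$, we have $\phi(t)\ge u$ whenever $t\ge t(u)$, where
\[
t(u):=C\sqrt{u/c}+D\,u/c+B\,(u/c)^{3/2}+A\,(u/c)^2 .
\]
So we change variables to the exponent level $u$ rather than working with $t$ directly.

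\textbf{Step 1.} Set $u_0=\log(KM)$, with $\log M\ge\log 2$ so that $u_0\lesssim\log M$. Split the integral at $t_0=t(u_0)$. The part over $[0,t_0]$ is at most $t_0^2\lesssim C^2\log M+D^2(\log M)^2+B^2(\log M)^3+A^2(\log M)^4$, using $(a_1+\dots+a_4)^2\le 4\sum a_i^2$.

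\textbf{Step 2.} For $t\ge t_0$, parametrize $t=t(u_0+v)$ with $v\ge0$; the map $u\mapsto t(u)$ is increasing and differentiable. On this range $\Pr(Z\ge t)\le KMe^{-u_0-v}=e^{-v}$. Hence the tail contribution is at most
\[
\int_0^\infty 2\,t(u_0+v)\,t'(u_0+v)\,e^{-v}\,dv=\int_0^\infty \bigl(t(u_0+v)^2\bigr)'e^{-v}\,dv .
\]
Integrating by parts, this is bounded by $\int_0^\infty t(u_0+v)^2e^{-v}dv$, the boundary term being $-t(u_0)^2\le 0$. Now $t(u_0+v)^2\lesssim \sum_{p\in\{1,2,3,4\}} W_p^2(u_0+v)^p$ with $W_1=C,\ W_2=D,\ W_3=B,\ W_4=A$. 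Since $(u_0+v)^p\lesssim u_0^p+v^p$ and $\int_0^\infty v^pe^{-v}dv=p!$, and $u_0\gtrsim1$ absorbs the constants, the tail contribution is again $\lesssim\sum_p W_p^2(\log M)^p$.

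\textbf{Step 3.} Combine Steps 1 and 2; the constants depend only on $c$ and on the implicit constant $K$.

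The only delicate point is the bookkeeping in Step 2: making sure the inversion of $\phi$ is correct, so that each term carries exactly the power $\log M$, $(\log M)^2$, $(\log M)^3$, $(\log M)^4$ paired with $C^2$, $D^2$, $B^2$, $A^2$. This works because the exponents $2,1,2/3,1/2$ in $\phi$ correspond to $t\propto u^{1/2},u,u^{3/2},u^2$. Everything else is routine.
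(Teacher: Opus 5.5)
Your proof is correct and follows essentially the paper's route: integrate $2t\Pr(Z\ge t)$, apply the union bound, split at the level where $M e^{-\phi(t)}\approx 1$, and control the remaining tail by a change of variables. The only difference is in handling the four-term minimum. The paper bounds $e^{-2\min(\cdot)}$ by the sum of the four exponentials and treats four separate integrals $I_C,I_D,I_B,I_A$, each with its own split point. You instead invert the minimum jointly through $t(u)$ and cover all four regimes with a single substitution and an integration by parts.
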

\begin{proof}
By rescaling the constants if necessary, we may assume
    \[
     \Pr(|X_i| \geq t) \lesssim e^{-2\min
    \left(
    \frac{t^2}{\tilde{C}^2}, \frac{t}{\tilde{D}}, \frac{t^{2/3}}{\tilde{B}^{2/3}}, \frac{t^{1/2}}{\tilde{A}^{1/2}}
    \right)}
    \]
where $\tilde{A} \asymp A$, $\tilde{B} \asymp B$, $\tilde{C} \asymp C$, and $\tilde{D} \asymp D$. Using the identity $\mathbb{E}(Y) = \int_0^\infty \Pr(Y\ge t)\,dt $ for nonnegative random variables and applying the union bound, we obtain
    \begin{align*}
        \mathbb{E}(\max_{1 \leq i \leq M} |X_i|^2)
        \leq &
        \int_0^\infty \min\left(1, \sum_{i=1}^M \Pr ( |X_i| \geq \sqrt{t})\right) dt
        \leq
        \int_0^\infty 2t\min\left\{1,M\max_i\Pr(|X_i|\ge t)\right\}\,dt.
    \end{align*}
Substituting the tail bound yields
    \[
        \mathbb{E}(\max_{1 \leq i \leq M} |X_i|^2)
        \lesssim I_C + I_D + I_B + I_A,
    \]
where
    \[
        I_C =  \int_0^\infty t \min(1, M e^{-{\frac{2t^2}{\tilde{C}^2}}})dt, \quad
        I_D = \int_0^\infty t \min(1, M e^{-{\frac{2t}{\tilde{D}}}}) dt,
    \]
    \[
        I_B = \int_0^\infty t \min(1, M  e^{-{\frac{2t^{2/3}}{\tilde{B}^{2/3}}}}) dt, \quad
        I_A = \int_0^\infty t \min(1, M
        e^{-{\frac{2t^{1/2}}{\tilde{A}^{1/2}}}}) dt.
    \]
We now bound each integral separately. Splitting the four integrals at $\tilde{C} \sqrt{\log(M)/2}$, $\tilde{D} \log(M)/2$, $\tilde{B} \log(M)^{3/2}/2^{3/2}$, and $\tilde{A} \log(M)^2/2^2$ respectively, and applying the corresponding change of variables gives
    \begin{align*}
        I_C & \lesssim
         \int_{0}^{\tilde{C} \sqrt{\log M/2}} tdt +
        M {\tilde{C}^2} \int_{\sqrt{\log M}}^\infty  ue^{-u^2} du \ \lesssim \tilde{C}^2 {\log(M)}, \\
        I_D&  \lesssim
        \int_{0}^{\tilde{D}\log M/2} t dt + M \tilde{D}^2  \int_{\log M}^\infty ue^{-u} du \lesssim \tilde{D}^2 (\log(M))^2, \\
        I_B &  \lesssim
        \int_{0}^{ \tilde{B}(\log M)^{3/2} / 2^{3/2} } t dt + M \tilde{B}^2 \int_{\log M}^{\infty}u^{2} e^{-u}  du
         \lesssim \tilde{B}^2 (\log M)^{3}, \\
        I_A  &   \lesssim
        \int_{0}^{\tilde{A}(\log M)^2/2^2} t dt +  M \tilde{A}^2 \int_{\log M}^{\infty}
          u^3 e^{-u} du \lesssim \tilde{A}^2 (\log M)^4.
    \end{align*}
Combining the four bounds yields the stated result.
\end{proof}
Further, the same argument applies when the tail bound contains only a subset of the four components, in which case the corresponding terms may be omitted from the final bound.

\section*{AI statement}
The authors used OpenAI's ChatGPT to assist with language editing and readability. The authors reviewed and verified all content and take full responsibility for the final manuscript.

\bibliography{bib}

\end{document}